\documentclass[11pt,reqno]{amsart}
\usepackage{amsmath,amssymb,amsthm,amsfonts,enumerate,hyperref}
\usepackage{verbatim}
\usepackage{xcolor}
\usepackage{hyperref}
\hypersetup{
    colorlinks=true, 
    linktoc=all,     
    linkcolor=blue,  
}
\usepackage{url}

\def\sideremark#1{\ifvmode\leavevmode\fi\vadjust{\vbox to0pt{\vss
\hbox to 0pt{\hskip\hsize\hskip1em%
\vbox{\hsize2cm\tiny\raggedright\pretolerance10000%
\noindent {\color{red}{#1}}\hfill}\hss}\vbox to8pt{\vfil}\vss}}}%

\pagestyle{plain}
\setlength{\parindent}{0pt}

\theoremstyle{plain}
\newtheorem{theorem}{Theorem}[section]
\newtheorem{theorem2}{Theorem}[section]
\newtheorem{proposition}[theorem2]{Proposition}

\newtheorem{lemma}[theorem2]{Lemma}

\theoremstyle{definition}
\newtheorem{definition}[theorem2]{Definition}

\theoremstyle{remark}

\numberwithin{equation}{section}

\newcommand{\vol}{\mathrm{Vol}}

\newcommand{\Ric}{\mathrm{Ric}}

\newcommand{\Hess}{\mathrm{Hess}}
\newcommand{\tr}{\mathrm{tr}}

\begin{document}
\title{Rigidity of $SU_n$-type symmetric spaces}
\author{Wafaa Batat}
\address{Ecole Nationale Polytechnique d'Oran, B.P 1523 El M'naouar, 31000 Oran, Algeria}
\email{batatwafa@yahoo.fr}
\author{Stuart James Hall}
\address{School of Mathematics and Statistics, Herschel Building, Newcastle University, Newcastle-upon-Tyne, NE1 7RU} 
\email{stuart.hall@ncl.ac.uk}
\urladdr{\href{https://www.ncl.ac.uk/maths-physics/staff/profile/stuarthall}{https://www.ncl.ac.uk/maths-physics/staff/profile/stuarthall}}
\author{Thomas Murphy}
\address{Department of Mathematics, California State University Fullerton, 800 N. State College Bld., Fullerton, CA 92831, USA.}
\email{tmurphy@fullerton.edu}
\urladdr{\href{http://www.fullerton.edu/math/faculty/tmurphy/}{http://www.fullerton.edu/math/faculty/tmurphy/}}
\author{James Waldron}
\address{School of Mathematics and Statistics, Herschel Building, Newcastle University, Newcastle-upon-Tyne, NE1 7RU} 
\email{james.waldron@ncl.ac.uk}
\urladdr{\href{https://www.ncl.ac.uk/maths-physics/staff/profile/jameswaldron}{https://www.ncl.ac.uk/maths-physics/staff/profile/jameswaldron}}
\maketitle  
\begin{abstract}
We prove that the bi-invariant Einstein metric on $SU_{2n+1}$ is isolated in the moduli space of Einstein metrics, even though it admits infinitesimal deformations. This gives a non-K\"ahler, non-product example of this phenomenon adding to the famous example of $\mathbb{CP}^{2n}\times\mathbb{CP}^{1}$ found by Koiso.  We apply our methods to derive similar solitonic rigidity results for the K\"ahler--Einstein metrics on `odd'  Grassmannians. We also make explicit a connection between non-integrable deformations and the dynamical instability of metrics under Ricci flow. 
\end{abstract}

\section{Introduction}\label{sec:intro}
\subsection{Rigidity of Einstein metrics}
\noindent For a fixed manifold $M$, a central object of study is the set of  Einstein metrics  viewed as a subset of all Riemannian metrics on $M$. In general, giving a full description of the set of Einstein metrics on  $M$ is an intractable project; a more reasonable goal is to produce a local description of this set around some fixed Einstein metric $g$.\\
\\
The foundational work on the  local structure theory of the set of Einstein metrics was carried out by Koiso  \cite{Koi_EMCS} who showed that the premoduli space of Einstein metrics about $g$ has the structure of an analytic subset of a smooth manifold $\mathcal{Z}$. Furthermore, the tangent space of $\mathcal{Z}$ at $g$ is naturally isomorphic to a certain eigenspace of the Lichnerowicz Laplacian. Hence, if the eigenspace is zero-dimensional, $g$ must be isolated in the premoduli space; isolated Einstein metrics are often referred to as being \textit{rigid}.  For spaces where the dimension of the eigenspace is positive, an interesting problem arises: do any of the tangent vectors (so-called {\it infinitesimal deformations} of the Einstein metric) come from genuine deformations of $g$ through Einstein metrics?\\
\\
Koiso investigated this question for compact symmetric spaces in  \cite{KoiOsaka1} and \cite{KoiOsaka2}. We will explain Koiso's findings in detail in Section \ref{sec:deform}. A key result is that all but one of the symmetric spaces where the dimension of the relevant eigenspace of the Lichnerowicz Laplacian is non-zero are associated to $SU_n$; they include the bi-invariant metric on $SU_n$ for $n>2$, and the K\"ahler--Einstein metric on the Grassmannians ${SU_n/S(U_{n-k}\times U_{k})}$ for $n>3$ and $k>1$. To investigate the rigidity of these metrics we need to investigate whether any infinitesimal deformations integrate up to genuine curves of Einstein metrics.  Koiso developed an obstruction to integrability and used this to show that the canonical metric on $\mathbb{CP}^{2n}\times \mathbb{CP}^{1}$ is isolated in the moduli space but admits infinitesimal deformations. This was the first such example of this phenomenon in the literature.\\
\\
One difficulty in computing Koiso's obstruction is finding a concrete description of the space of infinitesimal deformations beyond its formal definition as an eigenspace of the Lichnerowicz Laplacian. For the $SU_n$-type spaces, Gasqui and Goldschmidt found such descriptions in \cite{GGbook} and \cite{GGMem}.  Exploiting these methods leads to the  main result.
\begin{theorem}\label{ThmA}
 The bi-invariant Einstein metric on $SU_{2n+1}$ is isolated in the moduli space of Einstein metrics.
\end{theorem}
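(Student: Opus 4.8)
\emph{Proof strategy.} Write $g$ for the bi-invariant Einstein metric on $M=SU_{2n+1}$, normalised so that $\mathrm{Ric}(g)=\lambda g$, and let $\varepsilon(g)$ be the space of infinitesimal Einstein deformations, i.e. the transverse-traceless symmetric $2$-tensors $h$ with $\Delta_L h = 2\lambda h$ for the Lichnerowicz Laplacian $\Delta_L$. By Koiso's structure theory the premoduli space near $g$ is cut out, inside a neighbourhood of $0$ in $\varepsilon(g)$, by a $G$-equivariant real-analytic map (the obstruction map) whose differential at $0$ vanishes. Its leading term is a $G$-equivariant quadratic map $\Phi\colon\varepsilon(g)\to\varepsilon(g)$, and since higher-order terms are dominated near $0$, it suffices to prove that $\Phi(h)\neq 0$ for every $0\neq h\in\varepsilon(g)$; this forces $0$ to be an isolated zero and hence $g$ to be isolated in the moduli space. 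Because we already know $\varepsilon(g)\neq 0$, Theorem~\ref{ThmA} reduces entirely to this nonvanishing.

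The first step is to make $\varepsilon(g)$ explicit. Here I would use the Gasqui--Goldschmidt description of the infinitesimal deformations of the $SU_n$-type spaces, together with the $G\times G$ symmetry of $g$, to identify $\varepsilon(g)$ with the adjoint representation $\mathfrak{su}_{2n+1}$ and to write its elements algebraically in terms of the totally symmetric $\mathrm{Ad}$-invariant cubic tensor $d_{abc}=\tfrac{1}{2}\mathrm{tr}\bigl(\{T_a,T_b\}T_c\bigr)$, which is nonzero precisely because $2n+1\geq 3$. The key structural observation is then representation-theoretic: for $\mathfrak{su}_{N}$ with $N\geq 3$ the adjoint occurs in $S^{2}(\mathfrak{su}_N)$ with multiplicity one, so the space of $G$-equivariant quadratic maps $\mathfrak{su}_N\to\mathfrak{su}_N$ is one-dimensional. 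Hence $\Phi$ must be a scalar multiple of the trace-free squaring map $q(X)=X^{2}-\tfrac{1}{N}\mathrm{tr}(X^{2})\,I$ (identifying $\mathfrak{su}_N$ with trace-free Hermitian matrices up to the standard factor), i.e. $\Phi=\kappa\,q$ for some constant $\kappa$.

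Granting $\kappa\neq 0$, the argument closes by a parity computation, and this is where the hypothesis $N=2n+1$ is used. If $q(X)=0$ then $X^{2}=cI$ with $c=\tfrac{1}{N}\mathrm{tr}(X^{2})\geq 0$, so the eigenvalues of $X$ lie in $\{\pm\sqrt{c}\}$; tracelessness forces the two multiplicities to be equal, whence $N$ is even. For $N=2n+1$ odd this is impossible unless $c=0$, that is unless $X=0$. Therefore $\Phi(h)=0\iff h=0$, and $g$ is isolated. Note the even case is genuinely different: for $SU_{2n}$ the tensor $X=\mathrm{diag}(I_n,-I_n)$ gives a nonzero deformation killed by the first obstruction, which is exactly why the clean statement is available only in the odd case.

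The main obstacle is the analytic input $\kappa\neq 0$ (and, feeding into it, the precise identification $\varepsilon(g)\cong\mathfrak{su}_{2n+1}$). Extracting $\kappa$ requires expanding the Einstein condition along $g+th+t^2h_2+\cdots$, so that $\Phi(h)=\mathrm{proj}_{\varepsilon(g)}Q(h,h)$ for the explicit quadratic expression $Q(h,h)$ assembled from $h$, its covariant derivatives, and the algebraic curvature of the bi-invariant metric, and then integrating over $G$ to evaluate the single invariant scalar that $Q$ contributes against the $d$-symbol model of $\varepsilon(g)$. I expect this computation --- correctly assembling the second-variation and curvature terms and showing the resulting contraction of $d$-symbols does not vanish --- to be the crux; once $\kappa\neq 0$ is established, the equivariant reduction $\Phi=\kappa\,q$ and the elementary parity argument above finish the proof at once.
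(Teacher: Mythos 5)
Your skeleton --- Koiso's analytic reduction, pinning down the quadratic obstruction by $G$-equivariance, a parity argument that works only for odd rank, and a final nonvanishing computation --- is exactly the paper's strategy, but it contains a genuine structural error. By Koiso's classification (quoted in the paper), for $M=SU_N$ with $N=2n+1\geq 3$ the space of EIDs is $\varepsilon(g)\cong\mathfrak{su}_N\oplus\mathfrak{su}_N$, viewed as a representation of the full isometry group $G=SU_N\times SU_N$; it is \emph{not} the adjoint representation $\mathfrak{su}_N$ of a single copy of $SU_N$, as you assert. Consequently your multiplicity-one argument does not apply: the relevant space $\mathrm{Hom}_{G}\left(s^{2}(\mathfrak{g}),\mathfrak{g}\right)$ with $\mathfrak{g}=\mathfrak{su}_N\oplus\mathfrak{su}_N$ is \emph{two}-dimensional (the paper proves this via Weyl-group/Specht-module combinatorics), spanned by the trace-free squaring maps $\psi_1,\psi_2$ acting on the two factors separately, so you cannot conclude $\Phi=\kappa\,q$. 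This is not cosmetic: a combination $a\psi_1+b\psi_2$ with $b=0$ vanishes identically on the subspace $0\oplus\mathfrak{su}_N$, and isolation would fail. The paper closes this hole by observing that the obstruction must also commute with the involution $(x,y)\mapsto(y,x)$ of $\mathfrak{g}$ (induced by the inversion isometry of $SU_N$, which swaps left- and right-invariant deformations), which cuts the two-dimensional space down to the line spanned by $\Psi=\psi_1+\psi_2$; your componentwise parity argument then shows $\Psi$ has trivial zero set precisely when $N$ is odd, and a \emph{single} nonvanishing computation shows the coefficient is nonzero. Your proposal is missing the swap-invariance step entirely, and without it one nonvanishing computation only controls one of the two coefficients.

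Second, the input you label the crux --- $\kappa\neq 0$ --- is left entirely unexecuted, and it is the bulk of the paper's proof. The paper constructs the explicit EID $h_{\eta}=\iota_{\tilde{\eta}}\sigma$ from the invariant cubic form $\sigma(X,Y,Z)=\sqrt{-1}\left(\mathrm{tr}(XYZ)+\mathrm{tr}(XZY)\right)$ (your $d$-symbol) with $\eta$ a specific diagonal element of $\mathfrak{su}_N$, proves directly that such tensors really are EIDs, uses left-invariance to reduce Koiso's integral obstruction $\mathcal{I}$ to a pointwise computation at the identity in an explicit orthonormal basis, and evaluates the three contractions to get $\mathcal{I}(h_{\eta})=4N^{3}(N-1)(N-2)\,\mathrm{Vol}(SU_N)\neq 0$. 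So your proposal has the right outline, but as written the equivariant reduction is invalid because $\varepsilon(g)$ is misidentified, and the essential analytic content --- which you correctly flag as the hard part --- is absent.
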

\noindent This theorem provides an example of a family of non-product and non-K\"ahler  Einstein metrics admitting infinitesimal deformations, none of which is integrable. In the course of the proof of Theorem \ref{ThmA} we will show that almost all infinitesimal variations of $SU_{2n}$ are obstructed and give a precise characterisation of those which are not. However, we suspect the unobstructed variations are not integrable and that a higher order obstruction will demonstrate this. \\
\\
To prove Theorem \ref{ThmA} we follow the representation theoretic methods used by Koiso \cite{KoiOsaka1}, \cite{KoiOsaka2}, and Gasqui and Goldschmidt \cite{GGMem}. For a symmetric space $G/K$, one can view Koiso's obstruction as an element of the space $\mathrm{Hom}_{G}(s^{2}(\mathfrak{g}),\mathfrak{g})$; an infinitesimal deformation is unobstructed if it is a zero of the associated map. As the relevant Hom-spaces are one or two dimensional, it is possible to argue that the obstruction is a multiple of a particular map.  The theorem follows by showing that the map has no zeros and the obstruction is a non-zero multiple of the map.\\
\\
The strategy employed in the proof is quite general and should be applicable to similar rigidity questions for the symmetric spaces $SU_n/SO_{n}$, $SU_{2n}/Sp_{n}$, and $SU_{p+q}/S(U_{p}\times U_{q})$ which have been open for forty years since Koiso's work. We give a brief discussion of this question and related issues in Section \ref{sec:AOB}. 

\subsection{Rigidity of Ricci solitons}
Einstein metrics can be thought of as fixed points of the Ricci flow
\[
\frac{\partial g}{\partial t} = -2\mathrm{Ric}(g).
\]
Viewed thus, the Einstein metrics are a special case of a type of metric called a Ricci soliton.  One can then ask whether a given Einstein metric $g$ can be deformed through Ricci solitons. The structure theory of the moduli space of Ricci solitons was developed by Podesta and Spiro in \cite{PS} and we refer the reader to Section \ref{sec:solrig} for more details. All Hermitian symmetric spaces admit infinitesimal solitonic deformations.  We are able to prove that for the `odd' Grassmannians, none of these deformations can be integrated to give a Ricci soliton which is not an Einstein metric.
\begin{theorem}\label{ThmB}
For $N=2n+1$, the K\"ahler--Einstein metric on the Grassmannian ${SU_{N}/S(U_{k}\times U_{N-k})}$ is weakly solitonically rigid.
\end{theorem}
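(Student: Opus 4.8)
The plan is to run the solitonic analogue of the obstruction argument behind Theorem~\ref{ThmA}, working within the Podesta--Spiro structure theory recalled in Section~\ref{sec:solrig}. Write $N=2n+1$ and $G=SU_N$. The first step is to identify the space of infinitesimal solitonic deformations of the K\"ahler--Einstein metric $g$ on $Gr_{k}(\CC^{N})=G/K$. On a Fano K\"ahler--Einstein manifold the genuinely solitonic infinitesimal deformations (those that are not already infinitesimal Einstein deformations) are carried by the eigenspace $\{\varphi:\Lap\varphi=2\lambda\varphi\}$ of the lowest positive eigenvalue; these eigenfunctions are precisely the holomorphy potentials of the holomorphic vector fields induced by $\mathfrak{sl}_N(\CC)$. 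I would make this identification explicit and record that, as a $G$-representation, the eigenspace is isomorphic to the adjoint representation $\mathfrak{g}=\mathfrak{su}_N$, the potential $\varphi_X$ attached to $X\in\mathfrak{g}$ being, up to normalisation, the component of the moment map in the direction $X$.

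Second, I would phrase the Podesta--Spiro integrability obstruction as an equivariant quadratic map on this eigenspace, i.e.\ as an element of $\mathrm{Hom}_{G}(s^{2}(\mathfrak{g}),\mathfrak{g})$, in direct parallel with the description of Koiso's obstruction given in the introduction. For $\mathfrak{su}_N$ with $N\geq 3$ the adjoint representation occurs in $s^{2}(\mathfrak{g})$ with multiplicity one, so this Hom-space is one-dimensional and is spanned by the symmetrised product
\[
D(X,Y)=XY+YX-\tfrac{2}{N}\tr(XY)\,I,
\]
the map built from the totally symmetric invariant (the $d$-symbol) on $\mathfrak{su}_N$. Hence the obstruction is necessarily a scalar multiple of $D$, and the only analytic input required is that this scalar is non-zero; I would pin this down by evaluating the second variation on a single convenient potential $\varphi_X$ (for instance $X$ a diagonal traceless element), reducing the claim to one explicit integral over $G/K$.

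The final, and cleanest, step is the linear-algebra observation that makes the oddness of $N$ decisive. An infinitesimal solitonic deformation $\varphi_X$ is unobstructed only if $D(X,X)=0$, that is $X^{2}=\tfrac{1}{N}\tr(X^{2})\,I$, so that the square of the traceless anti-Hermitian matrix $X$ is a multiple of the identity. For such an $X$ the eigenvalues all have the form $\pm ic$, and tracelessness forces $+ic$ and $-ic$ to occur with equal multiplicity, which is impossible for an $N\times N$ matrix with $N$ odd unless $c=0$, i.e.\ unless $X=0$. Thus for $N=2n+1$ the obstruction vanishes only at the origin, every non-trivial solitonic deformation is obstructed, and $g$ is weakly solitonically rigid. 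The same computation shows the argument genuinely fails for even $N$, where block-diagonal $X$ with eigenvalues $\pm ic$ give unobstructed directions, which is exactly why the method is confined to the odd Grassmannians.

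I expect the main obstacle to be the second step: translating the Podesta--Spiro second variation into the clean equivariant map $D$ and verifying that the proportionality constant does not vanish. Fixing the correct normalisation of the eigenfunctions and of the soliton potential, and checking that no gauge or Einstein-direction contributions corrupt the identification with $D$, is where the real work lies; once the obstruction is known to be a non-zero multiple of $D$, the representation-theoretic and linear-algebra steps close the argument immediately.
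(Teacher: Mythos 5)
Your proposal follows essentially the same route as the paper's proof: the Podest\`a--Spiro structure theory, the Matsushima identification of the relevant eigenspace with $\mathfrak{su}_{2n+1}$, the one-dimensionality of $\mathrm{Hom}_{SU_{2n+1}}(s^{2}(\mathfrak{g}),\mathfrak{g})$ forcing the obstruction to be a scalar multiple of the symmetrised product map, and the odd-$N$ linear algebra showing that map vanishes only at the origin. The single step you defer --- showing the proportionality constant is non-zero --- is exactly what the paper settles by invoking Kr\"oncke's form of the obstruction and citing \cite{HMW} for an eigenfunction $f$ in the eigenspace with $\int_{Gr_{k}(\mathbb{C}^{2n+1})} f^{3}\, d\mathrm{Vol}_{g}\neq 0$.
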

\noindent  Our method also yields a new proof of a theorem of Kr\"oncke. This  strengthens the previous result in the case when $k=1$. It  concerns the solitonic rigidity of the Fubini--Study metric on $\mathbb{CP}^{2n}$.
\begin{theorem}[Kr\"oncke, \cite{KKJGA}]\label{ThmC}
The Fubini--Study metric on $\mathbb{CP}^{2n}$ is isolated in the moduli space of Ricci solitons.
\end{theorem}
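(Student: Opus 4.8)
The plan is to upgrade the $k=1$ case of Theorem \ref{ThmB} by exploiting the special feature that $\mathbb{CP}^{2n}$ carries no infinitesimal Einstein deformations. Writing $M = \mathbb{CP}^{2n} = SU_{2n+1}/S(U_{2n}\times U_1)$ and normalising $\Ric = g$, the first step is to pin down the space of infinitesimal solitonic deformations using the Podesta--Spiro structure theory together with the representation-theoretic descriptions of Gasqui--Goldschmidt. Since $k=1$ lies outside the range $k>1$ in which the Lichnerowicz Laplacian admits infinitesimal Einstein deformations, the transverse-traceless part is empty, so every infinitesimal solitonic deformation arises from varying the soliton vector field, that is, from the relevant eigenspace of the Laplacian. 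For $\mathbb{CP}^{2n}$ this is the space of Killing potentials (the first nonzero eigenspace), which as an $SU_{2n+1}$-representation is the adjoint representation. Concretely I would identify the deformation space with $V \cong \mathfrak{su}_{2n+1}$, an eigenfunction $f$ corresponding to a trace-free Hermitian matrix $A$ via the moment map $f_A([\ell]) = \langle A\ell,\ell\rangle / \langle \ell,\ell\rangle$.

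With $V$ identified, the second step is to realise the integrability obstruction as a $G$-equivariant quadratic map, an element of $\mathrm{Hom}_{SU_{2n+1}}(s^2 V, V)$, exactly as Koiso's obstruction sits inside $\mathrm{Hom}_{G}(s^{2}(\mathfrak{g}),\mathfrak{g})$. For $\mathfrak{g} = \mathfrak{su}_{2n+1}$ with $2n+1 \geq 3$ the adjoint occurs in $\mathfrak{g}\otimes\mathfrak{g}$ with multiplicity two, once symmetrically and once antisymmetrically, so $\mathrm{Hom}_{SU_{2n+1}}(s^2\mathfrak{g},\mathfrak{g})$ is one-dimensional and is spanned by the totally symmetric cubic invariant (the $d$-symbol). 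Up to scale the obstruction must therefore send a deformation $A$ to the trace-free part $(A^2)_0$ of its square.

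The heart of the argument is then a parity observation. A nonzero deformation $A$ is unobstructed only if $(A^2)_0 = 0$, that is $A^2 = \lambda I$ for some $\lambda \in \RR$. Since $A$ is trace-free Hermitian its eigenvalues are $\pm\sqrt{\lambda}$, with multiplicities $p$ and $q$ satisfying $p+q = 2n+1$ and $(p-q)\sqrt{\lambda} = 0$; as $2n+1$ is odd we cannot have $p=q$, forcing $\lambda = 0$ and hence $A = 0$. Thus the obstruction has no nontrivial zeros and every infinitesimal solitonic deformation is genuinely obstructed. This is precisely where oddness is essential: for $SU_{2n}$ the balanced choice $p=q=n$ produces a nonzero $A$ with $A^2$ scalar, which is the source of the unobstructed variations alluded to after Theorem \ref{ThmA}. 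Combining the absence of Einstein deformations with the obstructedness of all solitonic deformations shows that no nearby Ricci soliton exists, which is Theorem \ref{ThmC}.

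The main obstacle I anticipate is not the parity step, which is clean, but verifying that the obstruction is a \emph{nonzero} multiple of the $d$-symbol map. Equivariance reduces the obstruction to a single unknown scalar, and showing this scalar does not vanish requires an honest computation of the leading quadratic term in the Podesta--Spiro soliton obstruction. Here I expect to lean on Kr\"oncke's cubic-integral computation, checking that $\int_M f_A^3 \, dV$ over $\mathbb{CP}^{2n}$ is a nonzero multiple of $\tr(A^3)$; since degree-three invariants of a trace-free Hermitian matrix are spanned by $\tr(A^3)$, and $\tr(A^3) = \tr((A^2)_0 A)$ does not vanish identically on $V$, this pins the scalar down to be nonzero and closes the argument.
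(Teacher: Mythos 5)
Your proposal is correct and follows essentially the same route as the paper's own proof: vanishing of EIDs reduces ISDs to the $-2\lambda$-eigenspace $\cong \mathfrak{su}_{2n+1}$, the Kr\"oncke obstruction is pinned down by the one-dimensionality of $\mathrm{Hom}_{SU_{2n+1}}(s^{2}(\mathfrak{g}),\mathfrak{g})$ to the trace-free anticommutator map, the odd-dimension parity argument kills all nontrivial zeros, and a nonvanishing cubic integral $\int v^{3}$ (cited in the paper from \cite{HMW}, \cite{KS}, \cite{KKCAG}) shows the scalar is nonzero. The only cosmetic difference is that you phrase the map on Hermitian matrices via the moment map rather than on skew-Hermitian ones.
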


\subsection{Stability of the Ricci flow}
The relationship between infinitesimal deformations of Einstein metrics, their integrability, and the dynamical stability of the Ricci flow has been known for quite some time through the works of Sesum \cite{Sesum}, Haslhofer and M\"uller \cite{HaMu}, and Kr\"oncke \cite{KKCVP}. In Section \ref{sec:AOB} we compute the third variation of Perelman's $\nu$-functional for infinitesimal deformations of an Einstein metric (this is related to the calculation made for conformal variations by Kr\"oncke \cite{KKCAG} - see also the calculation of Knopf and Sesum \cite{KS}). The next theorem shows that the variation is a multiple of Koiso's obstruction to integrability $\mathcal{I}$ (see Section \ref{sec:deform} for the precise definition of this obstruction).

\begin{theorem}\label{ThmD}
Let $(M^{m},g)$ be an Einstein metric with positive Einstein constant $\frac{1}{2\tau}>0$ and let $h\in s^{2}(T^{\ast}M)$ be an EID. Then if $g(s):=g+sh$,
\[
\frac{d^{3}\nu}{ds^{3}}\bigg|_{s=0}  = \frac{-\tau}{2(4\pi\tau)^{\frac{m}{2}}}\mathcal{I}(h).
\]
\end{theorem}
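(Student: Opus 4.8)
The plan is to compute the third variation of Perelman's $\nu$-functional along the linear path $g(s) = g + sh$ directly, exploiting the special structure of the situation: $g$ is Einstein and $h$ is an essential infinitesimal deformation (EID), meaning $h$ is transverse-traceless (TT) and lies in the eigenspace of the Lichnerowicz Laplacian characterising Einstein deformations. Because $h$ is an EID, the first and second variations of $\nu$ at $g$ vanish (this is precisely the content of $g$ being a critical point with $h$ in the kernel of the second variation, i.e. the Jacobi/stability operator annihilates $h$), so the first nontrivial term is the third variation, and I expect substantial simplification from these vanishing conditions.

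First I would recall the variational formula for $\nu$. Since $g$ has positive Einstein constant $\tfrac{1}{2\tau}$, the functional $\nu$ is realised via the minimiser $f$ of Perelman's $\mathcal{W}$-functional at the scale $\tau$, and at the Einstein metric the minimiser is the constant function (so $f$ is constant and the weighted measure is the normalised volume). The strategy is to differentiate $\nu(g(s))$ three times in $s$. The key technical point is that $f = f(s)$ also depends on $s$ through the constraint defining $\nu$, but because $\nu$ is \emph{stationary} in $f$ (the minimiser condition), the explicit $s$-derivatives of $f$ only enter at orders that are killed by the Euler--Lagrange equation; concretely, the variation of $f$ contributes nothing to $\tfrac{d^3\nu}{ds^3}$ at $s=0$ beyond what is forced, because $h$ being TT means $\tfrac{d}{ds}\big(\mathrm{tr}_g h, \mathrm{div}_g h\big)$ vanish, decoupling the metric and potential variations to the order needed.

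Next I would expand $\nu(g+sh)$ as a power series in $s$ and isolate the cubic coefficient. The integrand of $\mathcal{W}$ is $\tau(R + |\nabla f|^2) + f - m$ weighted by $(4\pi\tau)^{-m/2}e^{-f}$, so I would need the Taylor expansions to third order of the scalar curvature $R(g+sh)$, the volume form, and (where it survives) the Dirichlet term, all evaluated using that $h$ is TT. The linearisation of scalar curvature and its higher variations at an Einstein metric admit clean expressions for TT tensors; in particular $\tfrac{d}{ds}R = -\langle \Ric, h\rangle + \mathrm{div}(\ldots)$ simplifies because $\Ric = \tfrac{1}{2\tau}g$ and $h$ is trace-free. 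Collecting the cubic terms, I anticipate that the surviving contribution organises precisely into the natural $G$-equivariant cubic form on TT tensors that defines Koiso's obstruction $\mathcal{I}(h)$ — this is the heart of the claimed identity, and matching it against the definition of $\mathcal{I}$ in Section \ref{sec:deform} is where the constant $\tfrac{-\tau}{2(4\pi\tau)^{m/2}}$ emerges from the normalisation of the weighted measure and the powers of $\tau$ in $\mathcal{W}$.

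The main obstacle will be the bookkeeping of the third-order term of the scalar curvature and ensuring that all the potential-function variations and all the divergence/total-derivative terms integrate to zero against the weighted measure. Establishing that the only surviving cubic contribution is the integral of $h$ against the second variation of the Ricci (or Einstein) operator — i.e. the quantity $\int \langle h, D^2\Ric(h,h)\rangle \, dV$ that is Koiso's $\mathcal{I}(h)$ — requires careful use of the EID conditions (TT and the Lichnerowicz eigenvalue equation) together with integration by parts, and verifying that the second-variation (quadratic) term genuinely vanishes so that no lower-order contamination survives. Once the cubic integrand is shown to coincide with $\mathcal{I}(h)$ up to the stated constant, the theorem follows; I would cross-check the constant by comparing with the known conformal-variation computation of Kr\"oncke \cite{KKCAG} and the related work of Knopf and Sesum \cite{KS} as a consistency test.
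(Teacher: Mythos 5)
Your skeleton (first and second variations vanish for an EID, so compute the third; decouple the potential-function variations; identify the surviving cubic term with $\int\langle h,\Ric''(h,h)\rangle$ and hence with $\mathcal{I}(h)$) is the right one, but there is a genuine gap in the step you use to discard the variations of the minimising pair $(f(s),\tau(s))$. Stationarity of $\mathcal{W}$ in $(f,\tau)$ protects only the \emph{first} derivative of $\nu$: writing $\nu(s)=\mathcal{W}(g(s),f(s),\tau(s))$ and using $\nu'(s)=\mathcal{W}_{g}(h)$, the chain rule gives, schematically,
\[
\nu'''(0)=\mathcal{W}_{ggg}(h,h,h)+2\,\mathcal{W}_{ggf}(h,h,f')+\mathcal{W}_{gff}(h,f',f')+\mathcal{W}_{gf}(h,f'')+(\text{analogous terms in }\tau),
\]
so $f'(0),\tau'(0),f''(0),\tau''(0)$ genuinely enter and are \emph{not} killed by the Euler--Lagrange equation. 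What closes this gap is: (i) $\tau'(0)=0$ and $f'(0)=0$, which follow not from stationarity but from linearising the minimiser equation at an Einstein metric in a TT direction --- this is exactly what the paper imports from the proof of Lemma 2.4 of Cao--Zhu \cite{CZ}; and (ii) the remaining terms pair to zero with $h$ because $\Hess(f'')=\tfrac{1}{2}L_{\nabla f''}g$ is $L^{2}$-orthogonal to TT tensors and $\langle h,g\rangle=\tr(h)=0$. Your stated mechanism --- that ``$h$ being TT means $\tfrac{d}{ds}(\tr_{g}h,\div_{g}h)$ vanish'' --- does not do this work: with respect to the fixed metric those quantities are $s$-independent, while $\tfrac{d}{ds}\tr_{g(s)}h\big|_{s=0}=-|h|^{2}\neq 0$, and in either reading the statement says nothing about $f'(0)$.

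Beyond this, your plan to Taylor-expand $R(g+sh)$, the Dirichlet term and the volume form to third order and then recognise Koiso's three-term formula is workable in principle but is precisely the part you only assert (``I anticipate that the surviving contribution organises...''), and it is far heavier than needed. The paper's route is shorter: since Perelman's first-variation formula holds for every $s$, one writes $\nu'(s)=A\int_{M}\langle h,C\rangle\,D$ with $C=\Ric+\Hess(f)-\tfrac{1}{2\tau}g$, observes $C(0)=0$ and $C'(0)=0$ (by the facts above), so the product rule leaves only $A\int_{M}\langle h,C''\rangle\,D$; the Hessian part of $C''$ dies by TT-orthogonality, and the identification of $\int_{M}\langle h,\Ric''(h,h)\rangle$ with $\tfrac{1}{2}\mathcal{I}(h)$, including the constant, is quoted from Proposition 4.2 and Lemma 4.3 of Koiso \cite{KoiOsaka2} rather than re-derived. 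If you repair point (i) by citing Cao--Zhu and replace your raw cubic expansion by the Koiso citation, your argument becomes essentially the paper's proof; as written, the decoupling step would fail and the cubic identification is unproven.
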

 
 This yields a new proof of the following result which was first demonstrated by Cao and He in \cite{CH}.
 
\begin{theorem}[Cao--He \cite{CH}]\label{ThmE}
For $n>2$, the bi--invariant metric on $SU_{n}$ is dynamically unstable as a fixed point of the Ricci flow.
\end{theorem}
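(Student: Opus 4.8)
The plan is to deduce dynamical instability from the variational characterisation of stability developed by Sesum \cite{Sesum}, Haslhofer--M\"uller \cite{HaMu}, and Kr\"oncke \cite{KKCVP}. For a compact positive Einstein metric, regarded as a shrinking Ricci soliton, these works show that $g$ is dynamically stable under the (normalised) Ricci flow, modulo diffeomorphism and scaling, precisely when it is a local maximiser of Perelman's $\nu$-functional; if $g$ fails to be a local maximiser, then it is dynamically unstable. The whole question therefore reduces to exhibiting a single direction along which $\nu$ strictly increases away from $g$.

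First I would recall that the bi-invariant metric $g$ on $SU_n$ is Einstein with positive Einstein constant $\Lambda=\tfrac{1}{2\tau}$, hence a critical point of $\nu$, and that for $n>2$ it carries a non-trivial space of EIDs (Koiso's observation). These EIDs are transverse--traceless and make up the kernel of the second variation of $\nu$: on transverse--traceless tensors the Hessian of $\nu$ is governed by the operator $\Delta_L-2\Lambda$, which annihilates an EID. Consequently, along the ray $g(s)=g+sh$ with $h$ an EID, both $\frac{d\nu}{ds}\big|_{0}$ and $\frac{d^2\nu}{ds^2}\big|_{0}$ vanish, so the local behaviour of $\nu$ near $g$ is dictated by the third variation.

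At this point Theorem \ref{ThmD} does the decisive work, identifying $\frac{d^3\nu}{ds^3}\big|_{0}$ with a fixed non-zero multiple of Koiso's obstruction $\mathcal{I}(h)$. It therefore suffices to produce one EID $h$ with $\mathcal{I}(h)\neq 0$. For odd $n=2m+1$ this is immediate from Theorem \ref{ThmA}, where in fact every EID is obstructed; for even $n=2m$ it follows from the accompanying representation-theoretic computation showing that almost all EIDs of $SU_{2n}$ are obstructed. Granting $\mathcal{I}(h)\neq 0$, the leading term of $\nu(g+sh)-\nu(g)$ is a non-vanishing cubic in $s$, so $\nu$ strictly exceeds $\nu(g)$ for one choice of sign of $s$. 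Hence $g$ is not a local maximiser of $\nu$, and the instability criterion delivers Theorem \ref{ThmE}, reproving the result of Cao--He.

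The main obstacle is supplying the input $\mathcal{I}(h)\neq 0$, which is exactly the representation-theoretic content worked out for Theorem \ref{ThmA}; once this is in place the remaining argument is soft. A secondary point demanding care is the passage from ``non-zero third variation along $h$'' to genuine dynamical instability: one must gauge-fix diffeomorphisms by working in the Ebin slice, check that the EID can be taken transverse--traceless so that $g+sh$ remains in the slice to the relevant order, and use the regularity and finiteness of the $\nu$-minimiser near $g$ so that the formal Taylor expansion faithfully records the behaviour of $\nu$. Each of these is standard in the stability literature, so the crux genuinely remains the non-triviality of $\mathcal{I}$.
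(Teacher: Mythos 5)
Your proposal is correct and follows essentially the same route as the paper: use Theorem \ref{ThmD} to identify the third variation of $\nu$ with a non-zero multiple of Koiso's obstruction, exhibit a single EID $h$ with $\mathcal{I}(h)\neq 0$, and conclude that $g$ is not a local maximiser of $\nu$, hence dynamically unstable. The only cosmetic difference is your sourcing of the obstructed EID for even $n$: the paper needs no separate ``almost all EIDs are obstructed'' argument, since the explicit computation from the proof of Theorem \ref{ThmA}, namely $\mathcal{I}(h_{\eta}) = 4n^{3}(n-1)(n-2)\vol(SU_{n})\neq 0$, is valid for every $n>2$ independently of parity.
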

Note that this result does not depend upon the parity of $n$.  This is because, to show instability, we do not require all infinitesimal Einstein deformations to be obstructed, but just a single one. We can find such a deformation for all $n>2$. 
\subsection{Existing work on rigidity and stability}
As mentioned already, the foundational work on the rigidity of Einstein metrics on symmetric spaces was carried out by Koiso in the papers \cite{KoiOsaka1}, \cite{KoiOsaka2}, and \cite{Koi_EMCS}. In particular, Koiso classified the compact irreducible spaces admitting infinitesimal Einstein deformations. Geometric constuctions of the infinitesimal deformations were given by Gasqui and Goldschmidt in \cite{GGbook} and \cite{GGMem}.  Recently, Derdzinski and Gal \cite{DerdGal} demonstrated that, within the space of left-invariant Einstein metrics on $SU_{n}$, the bi-invariant Einstein metric is isolated.  Theorem \ref{ThmA} removes the restriction that the deformations are through left-invariant metrics in the case when $n$ is odd.\\
\\
The calculation of the second variation of Perelman's $\nu$-entropy and the link between the spectrum of the Lichnerowicz Laplacian and the stability of the Ricci flow appeared in \cite{CHI} (with a detailed proof given in \cite{CZ}).  The dynamical stability of compact symmetric spaces was considered by Cao and He in \cite{CH} where the authors also addressed the stability of compact irreducible symmetric spaces with respect to the Einstein--Hilbert functional (where deformations are through TT tensors). The work of Semmelmann and Weingart \cite{SW} and Schwahn \cite{Sch} now gives a complete understanding of the Einstein--Hilbert picture.\\
\\
There have also been other interesting developments concerning deformability of Einstein metrics and stability questions. For example, there is recent work by Kr\"oncke on sine-cones \cite{KSC}.  The interaction of rigidity and stability questions with other geometric stuctures (e.g. special holonomy) has been taken up by Wang and  Wang in \cite{WW1} and \cite{WW2} as well as the joint works with Semmelmann \cite{SWW2} and \cite{SWW1}.  The relationship between deformability of $G_{2}$ structures and Einstein metrics is investigated by Nagy and Semmelmann in \cite{NS}.  

\subsection{Conventions}
All the manifolds we consider in this paper will be smooth and closed. We follow the convention that the rough Laplacian is  $\Delta h= \mathrm{tr}_{12}(\nabla^{2}h)$ and it therefore has a non-positive spectrum. The curvature tensor is 
\[
\mathrm{R}(X,Y)Z =\nabla^{2}_{X,Y}Z = \nabla_{[X,Y]}Z+\nabla_{Y}\nabla_{X}Z-\nabla_{X}\nabla_{Y}Z.
\]
\\
The curvature operator on symmetric tensors ${\mathrm{Rm}:s^{2}(T^{\ast}M)\rightarrow s^{2}(T^{\ast}M)}$ is defined by
\[
\mathrm{Rm}(h)(X,Y) = \sum_{k}h(R(X,E_{k})Y,E_{k}),
\]
where $\{E_{i}\}$ is a local orthonormal frame. When referring to   symmetric spaces we  adopt the convention of only referring to the manifold with the Riemannian symmetric space metric implicitly understood. Occasionally we will perform tensor calculations using indices and we will use the Einstein summation convention that repeated indices are to be summed over.\\
\\
\textit{Acknowledgements:}\\
\\
We would like to thank Uwe Semmelmann and  Paul-Andi Nagy for their interest in our work, discussions about the paper \cite{PS}, and for bringing the reference \cite{DerdGal} to our attention. We would like to thank Paul Schwahn for bringing a small error in our original proofs of Lemma 4.8 and Lemma 4.9 to our attention.

\section{Deformations of Einstein metrics}\label{sec:deform}
\subsection{General Theory}
We give the essential definitions and theory in this section. Almost all of what is written here is expanded upon and well explained in the book \cite{Bes}.
\begin{definition}[Premoduli space]
Let $(M,g)$ be an Einstein manifold scaled to have unit volume, let $\mathcal{M}_{1}$ be the space of all metrics on $M$ with unit volume, and let $\mathfrak{S}_{g}\subset \mathcal{M}_{1}$ be the slice to the action of the diffeomorphism group $D$ on $\mathcal{M}_{1}$. The subset of Einstein metrics in $\mathfrak{S}_{g}$ is the \textit{premoduli space of Einstein structures about} $g$. We denote this set by $\mathcal{E}(g)$.
\end{definition}

\begin{definition}[Einstein Infinitesimal Deformation (EID)]\label{def:EID}
Let $(M,g)$ be an Einstein manifold and let ${h\in s^{2}(T^{\ast}M)}$  satisfy 
\begin{equation}{\label{h_tf}}
\tr(h)=0,
\end{equation}	
\begin{equation}{\label{h_df}}
\mathrm{div}(h)=0,
\end{equation}
and	
\begin{equation}{\label{h_LL}}
\Delta h +2\mathrm{Rm}(h) =0.
\end{equation}	
The tensor $h$ is referred to as an \textit{essential Einstein infinitesimal deformation} (EID). We denote the space of such $h$ by $\varepsilon(g)$. 
\end{definition}
\noindent As Equation (\ref{h_LL}) is elliptic, we see that the space $\varepsilon(g)$ is always finite dimensional. Tensors that satisfy Equations (\ref{h_tf}) and (\ref{h_df}) are often referred to as \textit{transverse trace-free} or TT tensors. The foundational structure theorem for $\mathcal{E}(g)$ is due to Koiso.
\begin{theorem2}[Koiso \cite{Koi_EMCS}]\label{KST}
Let $(M,g)$ be an Einstein manifold. Then within the slice $\mathfrak{S}_{g}$ there exists a finite dimensional real analytic submanifold $\mathcal{Z}$ such that:
\begin{enumerate}[(i)]
\item The tangent space of $\mathcal{Z}$ at $g$ is the space of essential Einstein infinitesimal deformations $\varepsilon(g)$.
\item The manifold $\mathcal{Z}$ contains the premoduli space $\mathcal{E}(g)$ as a real analytic subset.
\end{enumerate}
\end{theorem2} 
\noindent To try to understand what spaces arise as $\mathcal{E}(g)$, we consider whether the elements of $\varepsilon(g)$ are the genuine tangents to a curve  of metrics in $\mathcal{E}(g)$ passing through the Einstein metric $g$. To make this precise, we introduce the Einstein operator ${E:\mathcal{M}_{1}\rightarrow s^{2}(T^{\ast}M)}$ given by
\begin{equation}\label{Ein_op}
E(g) = \Ric(g)- \left(\frac{\int_{M}\mathrm{S}(g)d\vol_{g}}{\dim(M)}\right) g,
\end{equation}
where $\Ric(g)$ and $\mathrm{S}(g)$ are respectively the Ricci curvature and scalar curvature of $g$. When $\dim(M)>2$, solutions of the equation $E(g)=0$ are Einstein metrics.\\ 
\\
If $g$ is an Einstein metric and $g(t)$ is a curve of metrics in $\mathcal{M}_{1}$  such that $g(0)=g$ and $\dot{g}(0)=h \in \varepsilon(g)$, it is straightforward to check that
$$\frac{d}{dt}E(g(t)) \bigg |_{t=0} = 0.$$
\begin{definition}[Integrability of EIDs]
Let $(M,g)$ be an Einstein manifold and let $k\in \mathbb{N}$. We say that an element $h \in \varepsilon(g)$ is \textit{integrable to order}  $k$ if there exist ${h_{2}, h_{3},\ldots,h_{k}\in s^{2}(T^{\ast}M)}$  such that the curve
$$g_{k}(t):=g+th+\sum_{j=2}^{j=k}\frac{t^{j}}{j!}h_{j},$$
satisfies
$$\frac{d^{j}}{dt^{j}}E(g_{k}(t)) \bigg |_{t=0} = 0,$$
for all $j=1,\ldots,k$. An element $h \in \varepsilon(g)$ is \textit{formally integrable} if there is a formal power series 
$$g(t):=g+th+\sum_{j=2}^{\infty}\frac{t^{j}}{j!}h_{j},$$
such that $E(g(t))=0$.
\end{definition}
\noindent An immediate consequence of Koiso's structure Theorem \ref{KST} is that if ${h\in \varepsilon(g)}$ is formally integrable then there is a smooth curve of Einstein metrics in $\mathcal{E}(g)$ such that $\dot{g}(0)=h$ (in other words a convergent power series solution can be constructed). It is also clear that if $h\in \varepsilon(g) $ is formally integrable, then it must be integrable to order $k$ for all $k\in \mathbb{N}$. Hence it is natural to investigate the obstruction to EIDs being integrable to order two. Formally we see that  
$$\frac{d^{2}}{dt^{2}}E(g(t)) \bigg |_{t=0}  = E''(h,h) +E'(\ddot{g}(0)),$$
where $E'$ and $E''$ are the first and second derivatives (suitably interpreted) of the Einstein operator $E$ defined by (\ref{Ein_op}). Koiso proved a useful way to check integrability to order two.
\begin{lemma}[Koiso, Lemma 4.7 in \cite{KoiOsaka2}]\label{Koilem1}
Let $(M,g)$ be an Einstein manifold. Then $h\in \varepsilon(g)$ is integrable to order two if and only if $E''(h,h) \in \varepsilon(g)^{\perp}$. Here the orthogonal complement is with respect to the $L^{2}$-inner product on $s^{2}(T^{\ast}M)$ induced by $g$.
\end{lemma}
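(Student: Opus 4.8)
The plan is to turn order-two integrability into a linear solvability statement and then settle it using the self-adjoint, elliptic nature of the linearised Einstein operator. Since any $h\in\varepsilon(g)$ already satisfies $E'(h)=0$, the only content in integrability to order two is the existence of $h_{2}\in s^{2}(T^{\ast}M)$ with
\[
E''(h,h)+E'(h_{2})=0,
\]
so that $h$ is integrable to order two precisely when $E''(h,h)\in\mathrm{Image}(E')$. I would first record that, restricted to transverse--traceless ($\mathrm{TT}$) tensors, $E'$ equals---up to a nonzero constant---the operator $h\mapsto\Delta h+2\Riem(h)$ of (\ref{h_LL}), so that Definition \ref{def:EID} identifies $\varepsilon(g)$ with $\ker\big(E'|_{\mathrm{TT}}\big)$. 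This operator is formally self-adjoint and elliptic, hence Fredholm, which gives the key fact $\mathrm{Image}(E'|_{\mathrm{TT}})=\varepsilon(g)^{\perp}\cap\mathrm{TT}$.

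The forward implication I would prove by a direct pairing. If $E''(h,h)=-E'(h_{2})$, then for every $k\in\varepsilon(g)$ one has $\langle E'(h_{2}),k\rangle_{L^{2}}=\langle h_{2},E'(k)\rangle_{L^{2}}$: the terms in $E'$ beyond its $\mathrm{TT}$ part---the $\div^{\ast}\div$ term, the Hessian-of-trace term, and the volume-normalisation term---all pair trivially against a trace-free, divergence-free $k$, while the remaining part is self-adjoint. Since $E'(k)=0$, this yields $\langle E''(h,h),k\rangle_{L^{2}}=0$, i.e. $E''(h,h)\in\varepsilon(g)^{\perp}$.

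The converse carries the real weight, and the difficulty is that $E''(h,h)$ need not be $\mathrm{TT}$: differentiating the contracted Bianchi identity $\div_{g}E(g)=\tfrac12\,dS_{g}$ twice along $g+th$ gives $\div E''(h,h)=\tfrac12\,d\big(D^{2}S(h,h)\big)$, which is exact but generically nonzero, so $E''(h,h)$ carries both a gauge (divergence) and a pure-trace part. The plan is to remove these before invoking the Fredholm statement on the $\mathrm{TT}$ part. The gauge part is handled using diffeomorphism-equivariance of $E$: since $E(g)=0$, the linearisation $E'$ annihilates every Lie-derivative direction $\div^{\ast}\xi$, so one may compose the sought curve with a $t$-dependent family of diffeomorphisms (a DeTurck-type gauge fixing) and seek $h_{2}$ in the Ebin slice $\mathfrak{S}_{g}$. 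The pure-trace part is absorbed into the conformal component of $h_{2}$, whose correction is governed by a scalar elliptic operator invertible modulo constants, with the constant mode pinned by the unit-volume normalisation. After these reductions the residual obstruction lives in $\mathrm{TT}$ and, by the Fredholm statement above, is solvable exactly when the $\mathrm{TT}$ part of $E''(h,h)$ is $L^{2}$-orthogonal to $\ker(E'|_{\mathrm{TT}})=\varepsilon(g)$; since $\varepsilon(g)\subset\mathrm{TT}$, this is the same as $E''(h,h)\in\varepsilon(g)^{\perp}$.

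I expect the gauge/trace bookkeeping in the converse to be the main obstacle: one must check that the diffeomorphism and conformal corrections can be made simultaneously and that neither spoils orthogonality to $\varepsilon(g)$. Conceptually this is automatic---every gauge direction lies in $\ker E'$ and every element of $\varepsilon(g)$ is transverse--traceless, so the corrections are $L^{2}$-orthogonal to $\varepsilon(g)$ and the only surviving obstruction is the $\mathrm{TT}$ one. Indeed this is precisely the second-order term of the Kuranishi (Lyapunov--Schmidt) obstruction map underlying the finite-dimensional reduction of Theorem \ref{KST}, which equals $\tfrac12$ of the $\varepsilon(g)$-projection of $E''(h,h)$; the lemma is the statement that this term vanishes exactly when $E''(h,h)\in\varepsilon(g)^{\perp}$.
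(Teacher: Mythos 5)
A preliminary remark: the paper offers no proof of this lemma at all --- it is imported verbatim from Koiso \cite{KoiOsaka2} (Lemma 4.7) --- so your proposal has to be judged on its own terms rather than against an in-paper argument. Your reduction of order-two integrability to the linear solvability statement $E''(h,h)\in\mathrm{Image}(E')$ is correct, and your forward implication is essentially complete: for $k\in\varepsilon(g)$ the $\delta^{*}\mathrm{div}$ term, the Hessian-of-trace term and the normalisation term of $E'$ all pair to zero against a trace-free, divergence-free tensor, the remainder is formally self-adjoint, and $E'(k)=0$, so $\mathrm{Image}(E')\perp\varepsilon(g)$. The Fredholm identity $\mathrm{Image}\bigl(E'|_{\mathrm{TT}}\bigr)=\varepsilon(g)^{\perp}\cap\mathrm{TT}$ is also correct.

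The gap is in the converse, precisely at the point you call the main obstacle and then dismiss as ``conceptually automatic''. The fact that Lie-derivative directions lie in $\ker E'$ is of no use for hitting the non-TT component of $-E''(h,h)$: elements of the kernel of a linear map cannot be used to produce target components, so gauge-fixing the unknown $h_{2}$ into the slice does not touch the real issue, which sits on the target side. What must be proved is (a) that the range of $E'$ is closed ($E'$ is not elliptic, so this needs an argument), and (b) that $E''(h,h)$ is $L^{2}$-orthogonal to the \emph{entire} cokernel of $E'$, not merely to $\varepsilon(g)$. The once-differentiated Bianchi identity gives $\langle E'(h_{2}),\delta^{*}\omega\rangle_{L^{2}}=-\tfrac12\langle DS(h_{2}),\delta\omega\rangle_{L^{2}}$, so the cokernel contains $\delta^{*}\omega$ for every co-closed one-form $\omega$ --- an infinite-dimensional piece; orthogonality of $E''(h,h)$ to it is exactly what your identity $\mathrm{div}\,E''(h,h)=\tfrac12\,d\bigl(D^{2}S(h,h)\bigr)$ delivers, and this needs to be promoted from a side remark to a step of the proof. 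What your sketch never establishes is that there is \emph{nothing else} in the cokernel: one must show that any element of $\mathrm{Image}(E')^{\perp}$ orthogonal to $\varepsilon(g)$ and to the $\delta^{*}$(co-closed) piece vanishes, and this is a genuine computation in the exact-gauge/conformal block, where $E'(ug)$ is a combination of $\mathrm{Hess}(u)$, $(\Delta u)g$, $ug$ and $\bigl(\int_{M}u\,d\mathrm{Vol}_{g}\bigr)g$ and the required invertibility of the associated scalar operator (of the type $(n-1)\Delta+S$) rests on a Lichnerowicz--Obata eigenvalue bound, valid for Einstein manifolds other than the round sphere (the sphere being vacuous here since $\varepsilon(g)=0$). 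Your closing appeal to the Kuranishi reduction behind Theorem \ref{KST} is circular: the proof of that reduction contains precisely this cokernel analysis. In short, the skeleton and the forward half are right, but the converse as written rests on a non-sequitur exactly where Koiso's argument does its hard work.
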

\noindent Using this result, we see that a necessary condition for $h$ to be integrable is the vanishing of the quantity $\langle E''(h,h),h\rangle_{L^{2}}$. This quantity was also computed by Koiso.
\begin{proposition}[Koiso \cite{KoiOsaka2}]
Let $(M,g)$ be an Einstein metric with Einstein constant $\lambda>0$ and let $h \in \varepsilon(g)$. Then an obstruction to the integrability of $h$ to order two is given by the nonvanishing of the quantity
\begin{small}
\begin{equation}\label{Koiobs}
\mathcal{I}(h) :=2\lambda\langle h_{i}^{k}h_{kj},h_{ij}\rangle_{L^{2}}+3\langle\nabla_{i}\nabla_{j}h_{kl},h_{ij}h_{kl} \rangle_{L^{2}}-6\langle\nabla_{i}\nabla_{j}h_{kl},h_{ik}h_{jl}\rangle_{L^{2}},  
\end{equation}
\end{small}
where each of the brackets denotes the $L^{2}$-inner product induced by the metric $g$ on the appropriate bundle.
\end{proposition}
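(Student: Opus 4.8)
The plan is to identify $\mathcal{I}(h)$ with (a nonzero multiple of) $\langle E''(h,h),h\rangle_{L^2}$ and then invoke Lemma~\ref{Koilem1}: since $h\in\varepsilon(g)$, integrability to order two forces $E''(h,h)\in\varepsilon(g)^{\perp}$, and in particular $\langle E''(h,h),h\rangle_{L^2}=0$. It therefore suffices to compute this pairing along the linear path $g(s)=g+sh$, for which $\ddot g(0)=0$ and hence $E''(h,h)=\tfrac{d^{2}}{ds^{2}}E(g(s))\big|_{s=0}$.

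First I would dispose of the non-tensorial averaging term. Writing $E(g)=\Ric(g)-c(g)\,g$ with $c(g)=\tfrac{1}{\dim M}\int_{M}\mathrm{S}(g)\,d\vol_{g}$ and using $\ddot g(0)=0$, the second variation of $c(g)\,g$ is $c''(0)\,g+2c'(0)\,h$. Pairing with $h$ and integrating, the first term dies because $\tr(h)=0$, and $c'(0)=0$ because the first variation of the total scalar curvature vanishes on TT tensors (indeed $\dot{\mathrm S}=-\langle \Ric,h\rangle=-\lambda\tr(h)=0$, and the volume form is unchanged to first order). Hence $\langle E''(h,h),h\rangle_{L^2}=\langle \Ric''(h,h),h\rangle_{L^2}$, and the whole computation reduces to the second variation of the Ricci tensor.

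For the second variation of Ricci I would work with the difference tensor $A(s)=\Gamma(g(s))-\Gamma(g)$, for which $\Ric(g(s))_{ij}=\Ric(g)_{ij}+\nabla_{k}A^{k}_{ij}-\nabla_{i}A^{k}_{kj}+A^{k}_{kl}A^{l}_{ij}-A^{k}_{il}A^{l}_{kj}$ holds exactly. With $T_{ijl}:=\nabla_{i}h_{jl}+\nabla_{j}h_{il}-\nabla_{l}h_{ij}$ one has $A(s)^{k}_{ij}=\tfrac{s}{2}g(s)^{kl}T_{ijl}$, so $\dot\Gamma^{k}_{ij}=\tfrac12 g^{kl}T_{ijl}$ and $\ddot\Gamma^{k}_{ij}=-h^{kl}T_{ijl}$. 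Substituting and pairing with $h^{ij}$, two simplifications are immediate: the term $\nabla_{i}A^{k}_{kj}$ contributes $\int \ddot\Gamma^{k}_{kj}(\div h)^{j}\,d\vol=0$ after integration by parts, while $\dot\Gamma^{k}_{kl}=\tfrac12\nabla_{l}(\tr h)=0$ kills the $A^{k}_{kl}A^{l}_{ij}$ term. What survives is the $\ddot\Gamma$ contribution $\int(\nabla_{k}\ddot\Gamma^{k}_{ij})h^{ij}\,d\vol$ and the single quadratic term $-2\int\dot\Gamma^{k}_{il}\dot\Gamma^{l}_{kj}h^{ij}\,d\vol$.

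The remaining work, and the main obstacle, is to reduce these surviving expressions — each of the schematic form $\int h\,\nabla h\,\nabla h\,d\vol$ — to the three cubic terms in $\mathcal{I}(h)$. I would integrate by parts once (legitimate since $M$ is closed), using $\div h=0$ to discard the terms in which the freed derivative lands on a factor of $h$, thereby converting everything to the form $\int \nabla^{2}h\,h\,h\,d\vol$; a short calculation already shows that $\ddot\Gamma$ supplies $-2\langle\nabla_{i}\nabla_{j}h_{kl},h_{ik}h_{jl}\rangle+\langle\nabla_{i}\nabla_{j}h_{kl},h_{ij}h_{kl}\rangle$, so the quadratic term must account for the rest. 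Commuting the two covariant derivatives via the Ricci identity introduces curvature: the Ricci part of each commutator, evaluated with the Einstein condition $\Ric=\lambda g$, produces exactly the term $2\lambda\langle h_{i}^{k}h_{kj},h_{ij}\rangle$, while the genuinely higher-curvature (full Riemann) parts are removed using the EID equation $\Delta h=-2\mathrm{Rm}(h)$ — which is where the hypothesis $h\in\varepsilon(g)$ enters. Careful bookkeeping of the nine contractions from $\dot\Gamma*\dot\Gamma$ then assembles the second-derivative terms with coefficients $+3$ and $-6$, giving $\langle \Ric''(h,h),h\rangle_{L^2}=\mathcal{I}(h)$. The delicate point throughout is organising these contractions together with the $\ddot\Gamma$ term and verifying that all full-Riemann contributions cancel, so that only the Einstein constant $\lambda$ and the explicit derivative terms survive; following Koiso \cite{KoiOsaka2} and the presentation in \cite{Bes}, this is where all the computational weight lies.
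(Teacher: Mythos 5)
Your argument is correct and coincides with the paper's treatment: the paper does not prove this proposition itself but quotes it from Koiso \cite{KoiOsaka2}, and the reduction it sets up immediately beforehand --- integrability to order two forces $\langle E''(h,h),h\rangle_{L^{2}}=0$ by Lemma \ref{Koilem1}, after which one identifies this pairing with $\mathcal{I}(h)$ --- is exactly your plan, with your second-variation sketch (linear path, disposal of the scalar-curvature averaging term, the exact Ricci formula in the difference tensor $A$, and the cancellation of full-Riemann terms via the Ricci identity, the Einstein condition and the EID equation) being a faithful reconstruction of Koiso's computation. The only caveat is the normalising constant: the paper's own use of Koiso's formula in the proof of Theorem \ref{ThmD} corresponds to $\langle \Ric''(h,h),h\rangle_{L^{2}}=\tfrac{1}{2}\mathcal{I}(h)$ rather than $\mathcal{I}(h)$, but since you only claim (and only need) that $\mathcal{I}(h)$ is a nonzero multiple of $\langle E''(h,h),h\rangle_{L^{2}}$, this is immaterial to the obstruction statement.
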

\subsection{Deformations of compact symmetric spaces}
For compact irreducible symmetric spaces, the space of Einstein infinitesimal deformations $\varepsilon(g)$ for the canonical Einstein metrics can be computed using representation theoretic methods.
\begin{theorem2}[Koiso, Theorem 1.1 in \cite{KoiOsaka1}  - see also \cite{GGbook} Proposition 2.40]\label{Infdef_thm}
Let $(M,g)$ be a compact irreducible symmetric space. Then the space of EIDs $\varepsilon(g)$ is $\{0\}$, except in the following cases:
\begin{enumerate}[(i)]
\item $M=SU_n$ with $n\geq 3$, here $\varepsilon(g)\cong \mathfrak{su}(n)\oplus\mathfrak{su}_n$,
\item $M=SU_n/SO_n$ with $n\geq 3$, here $\varepsilon(g)\cong \mathfrak{su}_n$,
\item $M=SU_{2n}/Sp_n$ with $n\geq 3$ here  $\varepsilon(g)\cong \mathfrak{su}_{2n}$,
\item $M=SU_{p+q}/S(U_p \times U_q)$ with $p\geq q\geq2$, here $\varepsilon(g)\cong \mathfrak{su}_{p+q}$,
\item $M=E_{6}/F_{4}$, here $\varepsilon(g)\cong \mathfrak{e}_{6}$.
\end{enumerate} 
\end{theorem2}
\noindent If we write $M=G/K$  where $G$ is the connected component of the identity of the isometry group of $g$, then the previous theorem states that $\varepsilon(g)\cong \mathfrak{g}$; the isomorphisms can be taken to be $G$-equivariant (where $G$ acts on $\mathfrak{g}$ via the adjoint action).  Part of the difficulty in determining whether the infinitesimal deformations are integrable is describing them geometrically, i.e. giving an explicit form of this isomorphism. Fortunately, for spaces {\it (i)-(iii)} in Theorem \ref{Infdef_thm}, there is a straightforward way describing the tensors in $\varepsilon(g)$ due to Gasqui and Goldschmidt.  This  construction is detailed in the memoir \cite{GGMem}; we give explicit differential geometric proofs to show the construction really does produce EIDs.\\
\\
Let $M$ be a symmetric space of compact type; $M$ is thus the coset space of a Riemannian symmetric pair $(G,K)$ where $G$ is a compact, semi-simple Lie group and $K$ is a closed subgroup of $G$. We denote the Lie algebras of $G$ and $K$ by $\mathfrak{g}$ and $\mathfrak{k}$ respectively and note that the inner product on $\mathfrak{g}$ induced by the Killing form yields the $\mathrm{Ad}_{K}$-invariant decomposition 
$$\mathfrak{g} = \mathfrak{k}\oplus \mathfrak{p}.$$
The subspace $\mathfrak{p}$ can be identified with the tangent space of $M$ at the identity coset $eK \in G/K$. For an integer $p\geq 2$ we denote by $s^{p}(\mathfrak{p}^{\ast})^{K}$ the $\mathrm{Ad}_{K}$-invariant symmetric $p$-tensors (or equivalently the $\mathrm{Ad}_{K}$-invariant symmetric degree $p$ polynomials over the space $\mathfrak{p}$). An element $q \in  s^{p}(\mathfrak{p}^{\ast})^{K}$ gives rise to a $G$-invariant symmetric $p$-tensor field $\sigma(q)$ that coincides with $q$ at the identity coset.  Conversely, every such $G$-invariant symmetric tensor field arises this way.\\
\\
We now assume we have a given $q\in s^{3}(\mathfrak{p}^{\ast})^{K}$ and a Killing vector field $\eta \in \mathfrak{iso}(M)$, and define a symmetric $2$-tensor $h_{\eta}$ by
\begin{equation}\label{h_def}
h_{\eta}:= \iota_{\eta}\sigma(q).
\end{equation}
We will henceforth forget the dependence on $q$ and write $\sigma$ for $\sigma(q)$. The following important result is due to Gasqui and Goldschmidt; we give a new and direct Riemannian-geometric proof.
\begin{lemma}[Gasqui--Goldschmidt]
Let $M=G/K$ be a compact irreducible symmetric space  with its canonical Einstein metric $g$. Then the tensor field $h_{\eta}\in \varepsilon(g)$.	
\end{lemma}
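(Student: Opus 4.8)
The plan is to verify the three defining conditions of Definition \ref{def:EID} for $h_\eta=\iota_\eta\sigma$ directly, the whole argument resting on one structural observation: since $\sigma$ is $G$-invariant and the parallel transports of a symmetric space are realised by differentials of transvections, which lie in $G$, the field $\sigma$ is parallel, $\nabla\sigma=0$. This is the fact that makes all three computations tractable. I will work at the base point $o=eK$, where $\sigma_o=q$ and, under $T_oM\cong\mathfrak{p}$, the paper's sign convention gives the symmetric-space curvature $R(X,Y)Z=[[X,Y],Z]$ with $[X,Y]\in\mathfrak{k}$ for $X,Y,Z\in\mathfrak{p}$.

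The first two conditions are then short. For the trace \eqref{h_tf}, note $\tr(h_\eta)=\langle\tr_{12}\sigma,\eta\rangle$, and $\tr_{12}\sigma$ is a $G$-invariant, hence parallel, $1$-form; as the isotropy representation of $K$ on $\mathfrak{p}$ is irreducible and non-trivial there are no non-zero invariant covectors, so $\tr_{12}\sigma=0$ and $\tr(h_\eta)=0$. For the divergence \eqref{h_df}, using $\nabla\sigma=0$ one finds $(\mathrm{div}\,h_\eta)_{j}=\nabla^{i}(\sigma_{ijk}\eta^{k})=\sigma_{ijk}\nabla^{i}\eta^{k}$; since $\sigma_{ijk}$ is symmetric in $i,k$ while $\nabla_{i}\eta_{k}$ is skew (as $\eta$ is Killing), this contraction vanishes and $\mathrm{div}(h_\eta)=0$.

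The Lichnerowicz equation \eqref{h_LL} I would split into a rough-Laplacian part and a curvature part. Differentiating twice and using $\nabla\sigma=0$ gives $\Delta h_\eta=\sigma(\cdot,\cdot,\Delta\eta)$, and the Bochner identity for Killing fields on an Einstein manifold, $\Delta\eta=-\mathrm{Ric}(\eta)=-\lambda\eta$, yields $\Delta h_\eta=-\lambda\,h_\eta$. It therefore suffices to prove the purely algebraic identity $\mathrm{Rm}(h_\eta)=\tfrac{\lambda}{2}h_\eta$, for then $\Delta h_\eta+2\,\mathrm{Rm}(h_\eta)=0$. By homogeneity this may be checked at $o$, where, with $\xi=\eta_o$ and $\{e_k\}$ an orthonormal basis of $\mathfrak{p}$, $\mathrm{Rm}(h_\eta)_{ij}=\sum_{k}q\big([[e_i,e_k],e_j],e_k,\xi\big)=:\Phi(q)(e_i,e_j,\xi)$. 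Applying the infinitesimal $\mathrm{Ad}_K$-invariance of $q$ to the bracket $[e_i,e_k]\in\mathfrak{k}$, together with $\sum_{k}[[X,e_k],e_k]=-\lambda X$ (which is precisely $\mathrm{Ric}=\lambda g$ for the space), produces the relation $\Phi(q)(X,Y,Z)+\Phi(q)(X,Z,Y)=\lambda\,q(X,Y,Z)$.

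The main obstacle is to upgrade this to $\Phi(q)=\tfrac{\lambda}{2}q$: one must show that $\Phi(q)$ is symmetric in its last two arguments, after which the displayed relation forces the eigenvalue $\tfrac{\lambda}{2}$. I expect this symmetry to come from the first Bianchi identity combined with the total symmetry of $q$ — this is the one place where the full symmetry of $q$ (beyond mere $\mathrm{Ad}_K$-invariance) and the symmetric-space curvature identities enter essentially, and pinning down the exact constant $\tfrac{\lambda}{2}$, rather than merely establishing that $h_\eta$ is \emph{some} curvature eigentensor, is the delicate point. Equivalently, the same identity could be read off from the vanishing of the Weitzenb\"ock curvature term of a suitable Laplacian on symmetric $3$-tensors applied to the parallel tensor $\sigma$. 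Granting this, the two parts of \eqref{h_LL} cancel and hence $h_\eta\in\varepsilon(g)$.
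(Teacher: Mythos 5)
Your verifications of \eqref{h_tf} and \eqref{h_df} are correct and essentially coincide with the paper's (both rest on $\nabla\sigma=0$, irreducibility of the isotropy representation, and the symmetric-composed-with-skew trace argument), and your reduction of the rough-Laplacian half of \eqref{h_LL} via the Bochner identity $\Delta\eta=-\lambda\eta$, giving $\Delta h_\eta=-\lambda h_\eta$, is also correct and cleaner than the paper's route. The genuine gap is the step you explicitly defer: the symmetry of $\Phi(q)$ in its last two arguments. Your $\mathrm{Ad}_K$-invariance computation establishes only the symmetrised relation $\Phi(q)(X,Y,Z)+\Phi(q)(X,Z,Y)=\lambda\, q(X,Y,Z)$, whereas the Lichnerowicz equation needs $\mathrm{Rm}(h_\eta)(X,Y)=\Phi(q)(X,Y,\xi)=\tfrac{\lambda}{2}q(X,Y,\xi)$; the relation alone leaves the component of $\Phi(q)$ antisymmetric in its last two slots completely undetermined, so it cannot pin down $\mathrm{Rm}(h_\eta)$ at all. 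Moreover, this symmetry is not a termwise consequence of ``first Bianchi plus total symmetry of $q$'': it is precisely the statement that occupies the entire second half of the paper's proof, where Equations \eqref{eqn:cyc1} and \eqref{eqn:cyc2}, the Killing-Hessian identity $\nabla^2_{X,Y}\xi=R(\xi,X)Y$, the first Bianchi identity, and the vanishing of the trace of a skew map composed with a symmetric one are all deployed exactly to show that $\sigma(R(Y,E_i)\eta,E_i,X)=\sigma(R(Y,E_i)X,E_i,\eta)$, i.e.\ that $\eta$ may be exchanged with the slot argument. As written, your proposal proves two of the three defining conditions of an EID but not the third.

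The gap is closable inside your pointwise Lie-algebraic framework, using only what you have already set up. Write $\Phi_{ijl}=\Phi(q)(e_i,e_j,e_l)=\sum_{k,m}R_{ikjm}q_{mkl}$ with $R_{ikjm}=\langle R(e_i,e_k)e_j,e_m\rangle$. First, the pair symmetry $R_{abcd}=R_{cdab}$ together with the symmetry of $q$ gives symmetry of $\Phi(q)$ in its \emph{first} two arguments: $\sum_{k,m}R_{ikjm}q_{mkl}=\sum_{k,m}R_{jmik}q_{mkl}=\sum_{k,m}R_{jkim}q_{kml}=\Phi_{jil}$, after relabelling the dummy indices $k\leftrightarrow m$ and using $q_{kml}=q_{mkl}$. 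Second, write your relation at $(i,j,l)$ and at $(j,i,l)$; since $q$ is totally symmetric the right-hand sides agree, so subtracting and using $\Phi_{ijl}=\Phi_{jil}$ yields $\Phi_{ilj}=\Phi_{jli}$, and applying the first-slot symmetry to both sides converts this into $\Phi_{lij}=\Phi_{lji}$, which is the desired symmetry in the last two slots. Your relation then forces $2\Phi(q)=\lambda q$, hence $\mathrm{Rm}(h_\eta)=\tfrac{\lambda}{2}h_\eta$ and \eqref{h_LL} follows. Note that this argument uses your derived relation a second time, not merely curvature identities, which is why the symmetry cannot simply be ``read off'' from Bianchi. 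With this step inserted, your proof is complete and is a genuinely different route from the paper's: a purely algebraic computation at the origin plus Bochner, in place of the paper's global manipulation of Killing vector fields.
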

\begin{proof}
To show that $h_{\eta}$ satisfies Equation (\ref{h_tf}) we consider the linear map ${\Phi:\mathfrak{p}\rightarrow \mathbb{R}}$ defined by 
$$\Phi(v) :=\tr(\iota_{v}\sigma)|_{(e\cdot K)}.$$ 	As $\Phi$ is $\mathrm{Ad}_{K}$-invariant, a non-trivial kernel would be an $\mathrm{Ad}_{K}$-invariant subspace of $\mathfrak{p}$.  Hence $\Phi$ must vanish identically as the symmetric space $M$ is irreducible. For $a\in G$, the trace of $h_{\eta}$ at a coset $(a\cdot K)$ is ${\Phi(a^{-1}\cdot\eta(a\cdot K)) =0}$ (where we note that the non-vanishing component of $a^{-1}\cdot\eta(a\cdot K)$ is in $\mathfrak{p}$). \\
\\
To demonstrate Equation (\ref{h_df}) we recall the result that any $G$-invariant tensor field on a symmetric space is parallel.  Thus
\[
(\nabla_{\cdot} h_{\eta})(\cdot,\cdot) = \sigma(\nabla_{\cdot} \eta,\cdot,\cdot).
\]
Taking the trace we see that
\[
\mathrm{div}(h_{\eta})(Y) = \sum_{j}\sigma(\nabla_{E_{j}}\eta,E_{j},Y),
\]
for an orthonormal frame $\{E_{j}\}$. However, after using the metric to produce sections of  $\mathrm{End}(TM)$, this is the trace of the composition of the symmetric 2-tensor $\iota_{Y}\sigma$ and the skew-symmetric tensor $\nabla_{\cdot}\eta$ (the skew-symmetry follows from the fact that $\eta$ is a Killing field).  Hence the trace vanishes and $h_{\eta}$ is divergence-free.\\
\\
To demonstrate Equation (\ref{h_LL}) we use the fact that, as the metric is homogeneous, we only need to check the equation
\[
\Delta(h_{\eta})(X,Y)+2\mathrm{Rm}(h_{\eta})(X,Y)=0,
\]
for Killing fields $X,Y$. We also consider a local orthonormal frame $\{E_{i}\}$ with $\nabla E_{i}$ vanishing at the point at which we want to compute the identity. To start with, we note
\[
\Delta h_{\eta}(X,Y) = (\nabla^{2}_{E_{i},E_{i}}h_{\eta})(X,Y) = \sigma(\nabla^{2}_{E_{i},E_{i}}\eta,X,Y),
\]
and
\[
\mathrm{Rm}(h_{\eta})(X,Y) = \sigma(\eta,E_{i},R(X,E_{i})Y)= \sigma(\eta,E_{i},R(Y,E_{i})X).
\]

 In general, given a Killing field $\xi$, we have the identity
\begin{equation}\label{eqn:Dsigma}
\sigma(\nabla_{W}\xi,Y,Z)+\sigma(W,\nabla_{Y}\xi,Z)+\sigma(W,Y,\nabla_{Z}\xi)=0,
\end{equation}
In particular
\begin{equation*}
\sigma(\nabla_{X}\eta,E_{i},Y)+\sigma(\nabla_{E_{i}}\eta,X,Y)+\sigma(\nabla_{Y}\eta,E_{i},X)=0.
\end{equation*}
Taking the derivative with respect to $E_{i}$ and using the fact that we have demonstrated tensors of the form $\iota_{\xi}\sigma$ are divergence free we obtain,
\begin{multline*}
\sigma(\nabla_{E_{i}}\nabla_{X}\eta,E_{i},Y)+\sigma(\nabla_{E_{i}}\nabla_{E_{i}}\eta,X,Y)+\sigma(\nabla_{E_{i}}\eta,\nabla_{E_{i}}X,Y)\\
+\sigma(\nabla_{E_{i}}\eta,X,\nabla_{E_{i}}Y)+\sigma(\nabla_{E_{i}}\nabla_{Y}\eta,E_{i},X)=0.
\end{multline*}
Applying Equation (\ref{eqn:Dsigma}) on the terms $\sigma(\nabla_{E_{i}}\eta,\nabla_{E_{i}}X,Y)$ and $\sigma(\nabla_{E_{i}}\eta,X,\nabla_{E_{i}}Y)$ yields
\[
\sigma(\nabla_{E_{i}}\eta,\nabla_{E_{i}}X,Y) = -\sigma(\nabla_{\nabla_{E_{i}}X}\eta,E_{i},Y).
\]
Hence 
\begin{equation} \label{eqn:Dsigma2}
\sigma(\nabla_{E_{i},X}^{2}\eta,E_{i},Y)+\sigma(\nabla_{E_{i},E_{i}}^{2}\eta, X,Y)+\sigma(\nabla_{E_{i},Y}^{2}\eta,E_{i},X) = 0.
\end{equation}

We now expand
\[
\sigma(\eta,E_{i},\nabla^{2}_{X,E_{i}}Y) = X\cdot \sigma(\eta,E_{i},\nabla_{E_{i}}Y)-\sigma(\nabla_{X}\eta,E_{i},\nabla_{E_{i}}Y)=0
\]
which implies that  Equation (\ref{eqn:Dsigma2}) becomes
\begin{equation}\label{eqn:cyc1}
\sigma(R(X,E_{i})\eta,E_{i},Y)+\sigma(\nabla_{E_{i},E_{i}}^{2}\eta, X,Y)+\sigma(R(Y,E_{i})\eta,E_{i},X)=0.
\end{equation}
We note that we obtain a similar equation simply by swapping $\eta$ and $X$
\begin{equation}\label{eqn:cyc2}
\sigma(R(\eta,E_{i})X,E_{i},Y)+\sigma(\nabla_{E_{i},E_{i}}^{2}X, \eta,Y)+\sigma(R(Y,E_{i})X,E_{i},\eta)=0.
\end{equation}
The next ingredient is an identity (see Lemma 33 in \cite{Petersen} but note the differing sign convention)  which holds for any Killing field $\xi$ on an arbitrary Riemannian manifold:
\[
\nabla_{X,Y}^{2}\xi = R(\xi,X)Y.
\]
Hence 
\[
\sigma(\nabla_{E_{i},E_{i}}^{2}X, \eta,Y)=\sigma(\nabla_{E_{i},E_{i}}^{2}\eta, X,Y) = -\lambda \sigma(\eta,X,Y).
\]
Thus subtracting  Equation (\ref{eqn:cyc1}) from Equation (\ref{eqn:cyc2}) along with an application of the first Bianchi identity yields
\[
\sigma(R(X,\eta)E_{i},E_{i},Y) = \sigma(R(Y,E_{i})\eta,E_{i},X)-\sigma(R(Y,E_{i})X,E_{i},\eta).
\]
The quantity $\sigma(R(X,\eta)E_{i},E_{i},Y)$ vanishes as it is the trace of an antisymmetric map (as in the proof above that $h_{\eta}$ satisifes Equation (\ref{h_df})).  Hence we can reinterpret
Equation (\ref{eqn:cyc1}) as
\[
\sigma(R(Y,E_{i})X,E_{i},\eta)+\sigma(\nabla_{E_{i},E_{i}}^{2}\eta, X,Y)+\sigma(R(X,E_{i})Y,E_{i},\eta) = 0.
\]
This  can be rewritten as 
\[
(\Delta h_{\eta})(X,Y)+2\mathrm{Rm}(h_{\eta})(X,Y) = 0,
\]
as required.

\end{proof}

If the space $s^{3}(\mathfrak{p})^{K}$ is non-zero, the map $\eta\rightarrow h_{\eta}$ is injective (any kernel would be a $G$-invariant subspace).  
As $\varepsilon(g)\cong \mathfrak{g}$ this means that all such EIDs arise by this construction. 
\noindent Irreducible symmetric spaces with $G$-invariant symmetric 3-tensors form a very short list.   
\begin{proposition}[Gasqui--Goldschmidt, \cite{GGMem} Proposition 2.1]
Let $M$ be an irreducible simply-connected symmetric space of compact type. The space $s^{3}(\mathfrak{p})^{K}$ vanishes unless $M$ is one of the following spaces:
\begin{enumerate}[(i)]
\item $SU_n$, with $n \geq 3$;
\item $SU_n/SO_n$, with $n \geq 3$;
\item $SU_{2n}/Sp_n$, where $n \geq 3$;
\item $E_{6}/F_{4}$.
\end{enumerate}
For the cases (i)-(iv), then the space $s^{3}(\mathfrak{p})^{K}$ is one-dimensional.
\end{proposition}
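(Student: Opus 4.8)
The plan is to compute the entire graded ring of $K$-invariant polynomials on $\mathfrak{p}$ at once, rather than isolating the cubic piece. Writing $\RR[\mathfrak{p}]^{K}=\bigoplus_{d\geq 0}s^{d}(\mathfrak{p}^{\ast})^{K}$, the space in the statement is the degree-three summand (using the invariant inner product to identify $\mathfrak{p}\cong\mathfrak{p}^{\ast}$, so that $s^{3}(\mathfrak{p})^{K}$ and $s^{3}(\mathfrak{p}^{\ast})^{K}$ have equal dimension). First I would fix a maximal abelian subspace $\mathfrak{a}\subset\mathfrak{p}$ and let $\Sigma=\Sigma(\mathfrak{g},\mathfrak{a})$ be the restricted root system, with little Weyl group $W$ acting on $\mathfrak{a}$.

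The key input I would use is the Chevalley restriction theorem for symmetric spaces: restriction of polynomials from $\mathfrak{p}$ to $\mathfrak{a}$ induces an isomorphism of graded algebras
\[
\RR[\mathfrak{p}]^{K}\;\xrightarrow{\ \sim\ }\;\RR[\mathfrak{a}]^{W}.
\]
Since $W$ is a finite reflection group, the Chevalley--Shephard--Todd theorem gives that $\RR[\mathfrak{a}]^{W}=\RR[f_{1},\dots,f_{r}]$ is a free polynomial algebra on homogeneous generators of degrees $d_{1},\dots,d_{r}$, the fundamental degrees of $W$. Consequently
\[
\dim s^{3}(\mathfrak{p}^{\ast})^{K}=\#\{\,i:d_{i}=3\,\}.
\]
This single formula will deliver both halves of the proposition simultaneously.

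Next I would simply read off the fundamental degrees of the irreducible Weyl groups. For type $A_{r}$ the degrees are $2,3,\dots,r+1$; for $B_{r}$, $C_{r}$ and the non-reduced $BC_{r}$ they are $2,4,\dots,2r$; for $D_{r}$ they are $2,4,\dots,2r-2$ together with $r$; and for the exceptional types $E_{6},E_{7},E_{8},F_{4},G_{2}$ one checks directly that no degree equals $3$. Hence a fundamental degree equal to $3$ occurs for a unique index and only when $\Sigma$ has type $A_{r}$ with $r\geq 2$ (the coincidence $D_{3}\cong A_{3}$ producing no new possibility). In particular $\dim s^{3}(\mathfrak{p}^{\ast})^{K}\leq 1$ always, which already accounts for the one-dimensionality assertion, and it is nonzero precisely when the restricted root system is of type $A_{r}$, $r\geq2$. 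To finish I would invoke the classification of restricted root systems of compact irreducible symmetric spaces: those of type $A_{r}$, $r\geq 2$, are exactly $SU_{n}$ (with $\Sigma=A_{n-1}$), $SU_{n}/SO_{n}$ ($\Sigma=A_{n-1}$), $SU_{2n}/Sp_{n}$ ($\Sigma=A_{n-1}$), and $E_{6}/F_{4}$ ($\Sigma=A_{2}$), the constraint $r\geq 2$ becoming $n\geq 3$ in the three classical families. As a concrete check on nonvanishing, the generating cubic is $\tr(X^{3})$ on the relevant space of matrices in the three classical cases, and the norm (determinant) form of the exceptional Jordan algebra, on which $F_{4}$ acts by automorphisms, in the case $E_{6}/F_{4}$.

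The step I expect to be most delicate is the bookkeeping around the classification in the last paragraph, rather than the structural formula. One must be sure the list of type-$A$ restricted systems is exhaustive and, crucially, that the low-rank coincidences do not smuggle in a spurious fifth family: for instance the real Grassmannian $SO_{6}/(SO_{3}\times SO_{3})$ has restricted system $D_{3}\cong A_{3}$ and so does carry an invariant cubic (the determinant on $3\times 3$ matrices), but the exceptional isomorphism $\mathrm{Spin}_{6}\cong SU_{4}$ identifies it with $SU_{4}/SO_{4}$, which is already case (ii). Verifying the restriction isomorphism and the degree tables is standard, so the real care lies in matching each type-$A$ root system to precisely one entry of the list and confirming that no others arise.
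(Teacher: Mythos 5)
Your argument is correct, but note that it is not competing with a proof in the paper: the authors quote this proposition verbatim from Gasqui--Goldschmidt (\cite{GGMem}, Proposition 2.1) and use it as a black box, so what you have written is an independent derivation of a cited result. The route you take is the natural invariant-theoretic one: the Chevalley restriction theorem identifies $\mathbb{R}[\mathfrak{p}]^{K}$ with $\mathbb{R}[\mathfrak{a}]^{W}$, Chevalley--Shephard--Todd makes the latter a free polynomial algebra on generators in the fundamental degrees, and the degree tables show that a degree-$3$ generator occurs at most once, and only in type $A_{r}$ with $r\geq 2$ (with $D_{3}\cong A_{3}$ as the only coincidence). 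The classification of restricted root systems of irreducible compact symmetric spaces then produces exactly the four listed families; your discussion of $SO_{6}/(SO_{3}\times SO_{3})\cong SU_{4}/SO_{4}$ correctly disposes of the one place where an exhaustiveness error could enter (the group case $Spin_{6}$, also of type $D_{3}$, is likewise absorbed as $SU_{4}$). This both proves the vanishing statement and the one-dimensionality claim in a single stroke, which is arguably cleaner than anything the paper could have reproduced.

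Two small points should be made explicit to close the argument. First, the identity $\dim s^{3}(\mathfrak{p}^{\ast})^{K}=\#\{i:d_{i}=3\}$ requires that no product of lower-degree generators contributes in degree $3$, i.e.\ that every $d_{i}\geq 2$. This holds because the restricted roots span $\mathfrak{a}^{\ast}$ for an irreducible space of compact type, so $W$ acts on $\mathfrak{a}$ with no nonzero fixed vectors and hence admits no linear invariants; without this remark a hypothetical degree-$1$ generator multiplied by the degree-$2$ generator would spoil the count. Second, your concrete generator in the classical cases is stated as $\mathrm{tr}(X^{3})$, but on skew-Hermitian matrices this is purely imaginary; the real-valued invariant is $\sqrt{-1}\,\mathrm{tr}(X^{3})$, consistent with the tensor $\sigma$ the paper defines in Equation (\ref{3tens_su(n)}). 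Neither point affects the validity of the structure of your proof.
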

\noindent Note the slight difference in the manifolds between  the previous proposition and Theorem \ref{Infdef_thm}.  This is because the infinitesimal deformations of the Grassmannians are not given by $\iota_{v}\sigma$,  since the space $ s^{3}(\mathfrak{p})^{K}$ vanishes for the Grassmannians. We shall discuss this in Section \ref{sec:AOB}.

\section{Input from Representation Theory}\label{sec:reptheory}
\noindent Let $M=G/K$ be a symmetric space of type  {\it (i)-(v)} from Theorem \ref{Infdef_thm}. If we denote by $\psi(h,h)$ the projection of the quantity $E''(h,h)$ to the space of EIDs $\varepsilon (g)$ then $\psi$ can be seen to be a symmetric bilinear $G$-equivariant map. In other words, $\psi \in \mathrm{Hom}_{G}(s^{2}(\mathfrak{g}),\mathfrak{g})$. Lemma \ref{Koilem1} can be restated in the following manner: 
\[
\textrm{an EID } h \in \varepsilon(g) \textrm{ is integrable to order 2 if and only if } \psi(h,h)=0.
\]
The strategy we employ to show that infinitesimal deformations are not integrable can by summarised as follows:
\begin{enumerate} 
\item We identify a one dimensional subspace $\mathcal{V} \subset \mathrm{Hom}_{G}(s^{2}(\mathfrak{g}),\mathfrak{g})$ and show that $\psi \in \mathcal{V}$.  
\item  Compute the exact form of a generator of $\mathcal{V}$, $\Psi$. Hence $\psi =  C\Psi$ for some $C\in\mathbb{R}$. We show that if $\Psi(x,x)=0$ then $x=0$.
\item Find some $h^{\ast}\in \varepsilon(g)$ so that the obstruction in Proposition \ref{Koiobs} \[
\mathcal{I}(h^{\ast}) = \langle \psi(h^{\ast},h^{\ast}),h^{\ast}\rangle_{L^{2}}\neq 0.
\] 
\item Conclude that $C\neq 0$ and so $\psi(h,h)\neq 0$ for any  $h\in \varepsilon(g)$ with $h\neq 0$.
\end{enumerate}\vspace{5pt}
In this paper we will consider this strategy for $G=SU_n\times SU_n$ (and in Section \ref{sec:solrig} we consider $G=SU_n$ for certain infinitesimal solitonic variations). In this section we carry out steps (1) and (2) for these groups.  We expect this material is standard and considerations such as these are implicit in much of the works of Koiso, and Gasqui and Goldschmidt; we include the proofs for completeness. 
\begin{lemma}
\label{lem:dimhom}
Let $G = SU_n \times SU_n$ with $n \ge 2$,  identify ${\mathfrak{g} := \mathrm{Lie}(G)}$ with ${\mathfrak{su}_n \oplus \mathfrak{su}_n}$, and consider $\mathfrak{g}$ as a representation of $G$ via the adjoint representation.  
Then the dimension of real vector space $\mathrm{Hom}_{G}\left( s^{2}( \mathfrak{g}) , \mathfrak{g} \right)$ is given by:
\[
\mathrm{dim} \; \mathrm{Hom}_{G}\left( s^{2}(\mathfrak{g}) , \mathfrak{g} \right)
=
\begin{cases}
0 & \; n = 2 \\
2 & \; n \ge 3.
\end{cases}
\]
\end{lemma}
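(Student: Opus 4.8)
The plan is to reduce the whole computation to the representation theory of a \emph{single} copy of $SU_n$, using Schur's lemma together with the factorisation of $\mathrm{Hom}$-spaces under external tensor products. Write $\mathfrak{g} = V_1 \oplus V_2$, where $V_1 = \mathfrak{su}_n \boxtimes \mathbf{1}$ and $V_2 = \mathbf{1} \boxtimes \mathfrak{su}_n$; here $\boxtimes$ denotes the external tensor product of $SU_n \times SU_n$-representations, $\mathbf{1}$ the trivial representation, and $\mathfrak{su}_n$ the adjoint representation of a single factor. Since $s^2(A \oplus B) = s^2(A) \oplus (A \otimes B) \oplus s^2(B)$, the source decomposes as
\[
s^2(\mathfrak{g}) = s^2(V_1) \oplus (V_1 \otimes V_2) \oplus s^2(V_2),
\]
and the target as $\mathfrak{g} = V_1 \oplus V_2$, so that $\mathrm{Hom}_G(s^2(\mathfrak{g}), \mathfrak{g})$ splits as a direct sum of six summands of the shape $\mathrm{Hom}_G(A, V_i)$.

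Next I would exploit the involution of $G$ that swaps the two $SU_n$ factors: it interchanges $V_1 \leftrightarrow V_2$ and $s^2(V_1) \leftrightarrow s^2(V_2)$ while fixing $V_1 \otimes V_2$, so it suffices to compute $\mathrm{Hom}_G(s^2(\mathfrak{g}), V_1)$ and double the result. Writing $s^2(V_1) = s^2(\mathfrak{su}_n) \boxtimes \mathbf{1}$, $\; s^2(V_2) = \mathbf{1} \boxtimes s^2(\mathfrak{su}_n)$, and $V_1 \otimes V_2 = \mathfrak{su}_n \boxtimes \mathfrak{su}_n$, the external-tensor-product factorisation $\mathrm{Hom}_{G_1 \times G_2}(U_1 \boxtimes U_2,\, W_1 \boxtimes W_2) = \mathrm{Hom}_{G_1}(U_1, W_1) \otimes \mathrm{Hom}_{G_2}(U_2, W_2)$ kills two of the three contributions to $\mathrm{Hom}_G(s^2(\mathfrak{g}), V_1)$. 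Indeed $\mathrm{Hom}_G(V_1 \otimes V_2, V_1)$ and $\mathrm{Hom}_G(s^2(V_2), V_1)$ each contain a tensor factor of the form $\mathrm{Hom}_{SU_n}(\mathfrak{su}_n, \mathbf{1})$ or $\mathrm{Hom}_{SU_n}(\mathbf{1}, \mathfrak{su}_n)$, both of which vanish because the adjoint representation is nontrivial and irreducible and hence contains no trivial summand. Thus only $\mathrm{Hom}_G(s^2(V_1), V_1)$ survives, and because the second factor is trivial on both sides it imposes no condition, giving $\mathrm{Hom}_G(s^2(V_1), V_1) \cong \mathrm{Hom}_{SU_n}(s^2(\mathfrak{su}_n), \mathfrak{su}_n)$.

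This reduces the entire statement to a single multiplicity: that of the adjoint representation inside $s^2(\mathfrak{su}_n)$ for one copy of $SU_n$. I expect this to be the only genuinely representation-theoretic step, and hence the main obstacle. The claim to establish is that this multiplicity is $1$ for $n \ge 3$ and $0$ for $n = 2$. For $n \ge 3$ the totally symmetric invariant tensor $d_{abc}$ on $\mathfrak{su}_n$ (the symmetric structure constants furnishing the cubic Casimir) supplies a nonzero equivariant symmetric map $s^2(\mathfrak{su}_n) \to \mathfrak{su}_n$, and one argues it spans the $\mathrm{Hom}$-space---either by a direct weight/character count or by invoking the known decomposition of the symmetric square of the adjoint. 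For $n = 2$ the adjoint is the $3$-dimensional representation, whose symmetric square decomposes as the $5$-dimensional (spin-$2$) piece plus a trivial summand, containing no copy of the adjoint; this matches the nonexistence of a $d$-symbol for $\mathfrak{su}_2$.

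Combining the reduction with the doubling coming from the swap symmetry then yields $\dim \mathrm{Hom}_G(s^2(\mathfrak{g}), \mathfrak{g}) = 2 \cdot \dim \mathrm{Hom}_{SU_n}(s^2(\mathfrak{su}_n), \mathfrak{su}_n)$, which equals $0$ for $n = 2$ and $2$ for $n \ge 3$, as claimed. Everything outside the multiplicity computation is formal manipulation of $\mathrm{Hom}$-spaces, so the proof hinges entirely on the existence and uniqueness (up to scale) of the symmetric invariant $d_{abc}$ and its failure to exist for $\mathfrak{su}_2$.
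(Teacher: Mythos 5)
Your proposal is correct, but it takes a genuinely different route from the paper's. The paper complexifies (using that $G$ is maximal compact in $SL_n(\mathbb{C})\times SL_n(\mathbb{C})$, so the real Hom-dimension equals the complex one), invokes Broer's theorem to replace $\mathrm{Hom}_{G_{\mathbb{C}}}(s^{2}(\mathfrak{g}_{\mathbb{C}}),\mathfrak{g}_{\mathbb{C}})$ by $\mathrm{Hom}_{W}(s^{2}(\mathfrak{t}),\mathfrak{t})$ for the Weyl group $W=S_n\times S_n$, and then counts multiplicities via the Specht-module decomposition of $s^{2}(V)$. You instead stay with the compact group, use the external-tensor-product factorisation of Hom-spaces to kill the cross terms (valid, since $\mathrm{Hom}_{SU_n}(\mathfrak{su}_n,\mathbf{1})=\mathrm{Hom}_{SU_n}(\mathbf{1},\mathfrak{su}_n)=0$), and reduce everything to the single multiplicity $\dim\mathrm{Hom}_{SU_n}(s^{2}(\mathfrak{su}_n),\mathfrak{su}_n)$; in effect you derive Lemma \ref{lem:dimhom} formally from Lemma \ref{lem:su_n_hom}, a nice modularisation that avoids Broer's theorem and the symmetric-group combinatorics entirely. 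What your route defers is precisely that single-factor multiplicity: existence of the map is fine (your $d$-tensor is $\psi_1$ of Lemma \ref{lem:mapsHom}), but the statement that the adjoint occurs in $s^{2}(\mathfrak{sl}_n)$ with multiplicity exactly one for $n\ge 3$ is asserted rather than proved, so to be complete you should either cite that classical decomposition or run an argument like the paper's for a single factor. You should also record the bridge $\dim_{\mathbb{R}}\mathrm{Hom}_{G}(A,B)=\dim_{\mathbb{C}}\mathrm{Hom}_{G}(A_{\mathbb{C}},B_{\mathbb{C}})$, since you quote a complex decomposition while the lemma concerns real Hom-spaces; this is the role played by the paper's maximal-compact remark.
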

\begin{proof}
Let $G_{\mathbb C} = SL_n\left(\mathbb C\right) \times SL_n\left(\mathbb C\right)$ and identify $\mathfrak{g}_{\mathbb C} := \mathrm{Lie}\;G_{\mathbb C}$ with $\mathfrak{sl}_n \oplus \mathfrak{sl}_n$.
Let $G_{\mathbb{C}}$ act on $\mathfrak{g}_{\mathbb{C}}$ via the adjoint representation.
It follows from the fact that $G$ is a maximal compact subgroup in $G_{\mathbb{C}}$ that 
\begin{align*}
\mathrm{dim}_{\mathbb{R}} \; \mathrm{Hom}_{G}\left( s^{2}( \mathfrak{g}) , \mathfrak{g} \right) &
 = \mathrm{dim}_{\mathbb{C}} \; \mathrm{Hom}_{G_{\mathbb{C}}} \left( s^{2}( \mathfrak{g}_{\mathbb{C}}) , \mathfrak{g}_{\mathbb{C}} \right). 
\end{align*}
Let $T$ be a maximal torus in $G_{\mathbb{C}}$, $W = N_{G_{\mathbb{C}}} / T$, the associated Weyl group, and $\mathfrak{t} = \mathrm{Lie} \, T$.
The generalised Chevalley restriction theorem proven in Theorem 1 of \cite{Broer} shows that there is an isomorphism of graded vector spaces 
\[
\mathrm{Hom} _{G_{\mathbb{C}}} \left( \mathrm{S}\mathfrak{g}_{\mathbb{C}} , \mathfrak{g}_{\mathbb{C}} \right) 
\cong
\mathrm{Hom} _{W} \left( \mathrm{S}\mathfrak{t} , \mathfrak{t} \right),
\]
where $\mathrm{S} \mathfrak{g}_{\mathbb{C}}$ respectively $\mathrm{S} \mathfrak{t}$ is the symmetric algebra of $\mathfrak{g}_{\mathbb{C}}$ respectively $\mathfrak{t}$, each equipped with the standard grading.
In particular, there is an isomorphism of complex vector spaces
\[
\mathrm{Hom} _{G_{\mathbb{C}}} \left( s^{2}(\mathfrak{g}_{\mathbb{C}}) , \mathfrak{g}_{\mathbb{C}} \right) 
\cong
\mathrm{Hom} _{W} \left( s^{2}(\mathfrak{t}) , \mathfrak{t} \right).
\]

This discussion shows that it suffices to prove that
\begin{equation}
\label{eqn:dimhom}
\mathrm{dim}_{\mathbb{C}} \; \mathrm{Hom}_{W} \left( s^{2}( \mathfrak{t}) , \mathfrak{t} \right)
=
\begin{cases}
0 & \; n = 2 \\
2 & \; n \ge 3 .
\end{cases}
\end{equation}

Fix $T = T' \times T'$ where $T'$ is the standard maximal torus in $SL_n\left(\mathbb{C}\right)$.
Then $W$ is naturally identified with $S_n \times S_n$. Denote by $\mathbb{C}$ the trivial representation of $S_n$, by $U$ the one dimensional alternating representation, by $V$ the standard $n-1$ dimensional irreducible representation, and for $n \ge 3$, by $V_{\left(d-2,2\right)}$ the Specht module corresponding to the partition $\left(d-2,2\right)$.
For $M_1$ and $M_2$ representations of $S_n$ we use $M_1 \boxtimes M_2$ to denote the representation $M_1 \otimes M_2$ of $S_n \times S_n$ in which the first (respectively the second) copy of $S_n$ acts on the first (respectively the second) tensor factor. See for example Chapter 4 of \cite{FultonHarris} for further details about representations of symmetric groups.

\noindent The $S_n \times S_n$ representation $\mathfrak{t}$  decomposes into irreducible $S_n \times S_n$ representations as
\begin{equation}
\label{eqn:tdecomp}
\mathfrak{t} \cong \left( V \boxtimes \mathbb{C} \right) \oplus \left( \mathbb{C} \boxtimes V \right).
\end{equation}
There is the decomposition of the representation $s^{2}(\mathfrak{t})$ as
\begin{align}
\nonumber
s^{2}(\mathfrak{t}) & \cong s^{2} \left( \left( V\boxtimes \mathbb{C} \right) \oplus \left( \mathbb{C} \boxtimes V \right) \right) \\
\nonumber
& \cong s^{2} \left( V \boxtimes \mathbb{C} \right) \oplus \mathrm{s}^2 \left( \mathbb{C} \boxtimes V \right) \oplus \left( V \boxtimes \mathbb{C} \right) \otimes \left( \mathbb{C} \boxtimes V \right) \\
\label{eqn:Stdecomp}
& \cong \left( s^{2}( V) \boxtimes \mathbb{C}\right)  \oplus \left( \mathbb{C} \boxtimes s^{2}(V) \right) \oplus \left( V \boxtimes V \right).
\end{align}
If $d=2$ then $V = U$, $s^{2}(V) = V^{\otimes 2} = \mathbb{C}$, and \eqref{eqn:Stdecomp} decomposes into irreducible $S_n \times S_n$ representations as
\begin{equation}
\label{eqn:Stdecomp1}
\left( \mathbb{C} \boxtimes \mathbb{C} \right) \oplus \left( \mathbb{C} \boxtimes \mathbb{C} \right) \oplus \left( V \boxtimes V \right).
\end{equation}
If $d \ge 3$ then by Exercise 4.19 in \cite{FultonHarris} there is an isomorphism of $S_n$ representations $s^{2}(V) \cong U \oplus V \oplus V_{\left(d-2,2\right)}$, and so \eqref{eqn:Stdecomp} decomposes into irreducible $S_n \times S_n$ representations as
\begin{align}
\nonumber
\left( U \boxtimes \mathbb{C} \right) \oplus \left( V \boxtimes \mathbb{C} \right) \oplus \left( V_{\left(d-2,2\right)} \boxtimes \mathbb{C} \right) \oplus \left( \mathbb{C} \boxtimes U \right) \\
\label{eqn:Stdecomp2}
\oplus \left( \mathbb{C} \boxtimes V \right) \oplus \left( \mathbb{C} \boxtimes V_{\left(d-2,2\right)} \right) \oplus \left( V \boxtimes V \right).
\end{align}
Equation \eqref{eqn:dimhom} then follows from Schur's Lemma and comparing \eqref{eqn:tdecomp} with \eqref{eqn:Stdecomp1} and \eqref{eqn:tdecomp} with \eqref{eqn:Stdecomp2}.
\end{proof}
The space $\mathrm{Hom}_{G}\left( s^{2}( \mathfrak{g}) , \mathfrak{g} \right)$ can be described explicitly.
\begin{lemma} 
\label{lem:mapsHom}
Let $G$ and $\mathfrak{g}$ be as in Lemma \ref{lem:dimhom}. For $n \ge 3$ the vector space $\mathrm{Hom}_{G}\left( s^{2}( \mathfrak{g}) , \mathfrak{g} \right)$ has a basis consisting of the following two functions:
\begin{align*}
\psi_1  \left( \left(X,Y\right) , \left(X',Y'\right) \right) & := \sqrt{-1} \left( \left(XX' + X'X\right) - \frac{1}{n} \mathrm{tr} \left(XX' + X'X\right) \mathrm{Id} , 0 \right), \\
\psi_2 \left( \left(X,Y\right) , \left(X',Y'\right) \right) & := \sqrt{-1}
\left( 0, \left(YY' + Y'Y\right) - \frac{1}{n} \mathrm{tr} \left(YY' + Y'Y\right) \mathrm{Id} \right),
\end{align*}
where $(X,Y)$ and $(X',Y') \in \mathfrak{su}_n\oplus \mathfrak{su}_n$.\\
Furthermore, the subspace  $\mathcal{V}$ of $\mathrm{Hom}_{G}\left( s^{2}( \mathfrak{g}) , \mathfrak{g} \right)$ consisting of maps invariant under the involution on ${\mathfrak{su}_n\oplus \mathfrak{su}_n}$ defined by $(x,y)\rightarrow(y,x)$ is one dimensional and is spanned by the map $\Psi:=\psi_{1}+\psi_{2}$.

\end{lemma}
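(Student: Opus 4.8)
The plan is to leverage Lemma~\ref{lem:dimhom}, which already tells us that $\mathrm{Hom}_G(s^2(\mathfrak{g}),\mathfrak{g})$ is two-dimensional for $n\ge 3$. Thus it suffices to exhibit $\psi_1$ and $\psi_2$ as two linearly independent elements of this space; they will then automatically form a basis. First I would verify that $\psi_1$ and $\psi_2$ genuinely take values in $\mathfrak{g}=\mathfrak{su}_n\oplus\mathfrak{su}_n$. For $X,X'\in\mathfrak{su}_n$ one has $X^{\ast}=-X$, so $(XX'+X'X)^{\ast}=X'X+XX'$; that is, the anticommutator is Hermitian. Multiplying by $\sqrt{-1}$ makes it skew-Hermitian, and subtracting the trace term renders it trace-free. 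Hence the first slot of $\psi_1$ lies in $\mathfrak{su}_n$ and the second is zero, so $\psi_1$ is $\mathfrak{g}$-valued; it is manifestly symmetric in its two arguments because the anticommutator is. The same argument applies to $\psi_2$.

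Next I would check $G$-equivariance. Under the adjoint action $(g,h)\in SU_n\times SU_n$ sends $(X,Y)\mapsto(gXg^{-1},hYh^{-1})$. Since conjugation is an algebra homomorphism, $gXg^{-1}\,gX'g^{-1}+gX'g^{-1}\,gXg^{-1}=g(XX'+X'X)g^{-1}$, and the trace term is unchanged because $\mathrm{tr}$ is conjugation-invariant and $g\,\mathrm{Id}\,g^{-1}=\mathrm{Id}$. Thus $\psi_1$ intertwines the adjoint actions, and likewise $\psi_2$. Linear independence is immediate: $\psi_1$ takes values in the first summand of $\mathfrak{g}$ and $\psi_2$ in the second, and each is nonzero (take $X=X'$ with $X^2$ not a scalar multiple of the identity). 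Combined with the dimension count from Lemma~\ref{lem:dimhom}, $\{\psi_1,\psi_2\}$ is therefore a basis.

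Finally, for the statement about the swap involution $\tau\colon(x,y)\mapsto(y,x)$, I would record that $\tau$ is an automorphism of the pair $(G,\mathfrak{g})$ (exchanging the two factors), so it induces a well-defined action on $\mathrm{Hom}_G(s^2(\mathfrak{g}),\mathfrak{g})$ by $(\tau\cdot\phi)(u)=\tau\bigl(\phi(\tau\cdot u)\bigr)$, using $\tau^2=\mathrm{id}$. Applying $\tau$ to both the inputs and the output of $\psi_1$ exchanges the roles of the $X$- and $Y$-variables and swaps the two summands of the target, yielding $\tau\cdot\psi_1=\psi_2$ and symmetrically $\tau\cdot\psi_2=\psi_1$. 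Hence $\tau$ acts on the two-dimensional Hom-space as the interchange of basis vectors: its $+1$-eigenspace is spanned by $\Psi=\psi_1+\psi_2$ and its $-1$-eigenspace by $\psi_1-\psi_2$. This shows the invariant subspace is one-dimensional and generated by $\Psi$, as claimed.

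I expect no serious obstacle here: the content reduces to the already-established dimension count together with elementary manipulations of skew-Hermitian matrices. The only points demanding care are confirming that the explicit anticommutator expressions really are trace-free and skew-Hermitian (so that they lie in $\mathfrak{su}_n$ rather than merely in $\mathfrak{gl}_n$), and setting up the induced involution on the Hom-space with the correct placement of $\tau$ so that $\psi_1$ and $\psi_2$ are indeed interchanged.
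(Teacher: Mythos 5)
Your proposal is correct and follows essentially the same route as the paper: the paper's proof simply notes that $\psi_1$ and $\psi_2$ are linearly independent symmetric $G$-equivariant maps, invokes Lemma~\ref{lem:dimhom} for the dimension count, and checks the involution claim directly. You have merely written out the ``straightforward to check'' details (skew-Hermiticity, trace-freeness, equivariance, and the swap $\tau\cdot\psi_1=\psi_2$), all of which are accurate.
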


\begin{proof}
It is straightforward to check that $\psi_1$ and $\psi_2$ are linearly independent symmetric bilinear maps from $\mathfrak{g} \times \mathfrak{g}$ to $\mathfrak{g}$. 
The result then follows from Lemma \ref{lem:dimhom}. The second assertion is easily checked directly.
\end{proof}

\begin{lemma}
\label{lem:zeroset}
Let $\Psi$ be as in Lemma \ref{lem:mapsHom}. Then, for $n$ odd, the zero set of $\Psi$ is $\left\{0\right\}$; for $n$ even, the zero set of $\Psi$ is the $G$-orbit of the plane spanned by the elements $\left(\Lambda,0\right)$ and $\left(0,\Lambda\right)$ of $\mathfrak{g} = \mathfrak{su}_n \oplus \mathfrak{su}_n$, where $\Lambda$ is the block diagonal sum of $\frac{n}{2}$ copies of the matrix
\[
\left(\begin{array}{cc}
\sqrt{-1}& 0 \\
0 & -\sqrt{-1}
\end{array}\right).
\]
\end{lemma}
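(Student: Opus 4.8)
The plan is to reduce the equation $\Psi(v,v)=0$ to a purely linear-algebraic spectral condition on each component and then analyse the resulting conjugacy classes. Writing $v=(X,Y)\in\mathfrak{su}_n\oplus\mathfrak{su}_n$ and restricting the symmetric bilinear map to the diagonal, I would first compute, using $\Psi=\psi_1+\psi_2$ and $XX'+X'X=2X^2$,
\[
\Psi\big((X,Y),(X,Y)\big)=2\sqrt{-1}\left(X^2-\tfrac1n\tr(X^2)\mathrm{Id},\; Y^2-\tfrac1n\tr(Y^2)\mathrm{Id}\right),
\]
so that $v$ lies in the zero set if and only if both $X^2$ and $Y^2$ are scalar multiples of the identity. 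Since the two factors of $\mathfrak{su}_n\oplus\mathfrak{su}_n$ are acted upon independently by the two copies of $SU_n$, it suffices to describe the set $\mathcal{S}=\{X\in\mathfrak{su}_n: X^2\in\mathbb{R}\cdot\mathrm{Id}\}$ and then take the product.

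Next I would carry out the spectral analysis of $\mathcal{S}$. Because $X$ is skew-Hermitian it is unitarily diagonalisable with purely imaginary eigenvalues $\sqrt{-1}\lambda_1,\dots,\sqrt{-1}\lambda_n$, $\lambda_j\in\mathbb{R}$, subject to $\sum_j\lambda_j=0$ by tracelessness. The condition $X^2=c\,\mathrm{Id}$ forces $-\lambda_j^2=c$ for every $j$, so all the $\lambda_j$ share a common absolute value $\mu\ge0$. If $\mu=0$ then $X=0$; if $\mu>0$ then the eigenvalues are $\pm\sqrt{-1}\mu$ and the vanishing of the trace forces the two signs to occur with equal multiplicity $n/2$, which is possible only when $n$ is even.

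The two cases then follow quickly. For $n$ odd the only element of $\mathcal{S}$ is $X=0$, hence the zero set of $\Psi$ is $\{0\}$. For $n$ even, a nonzero $X\in\mathcal{S}$ has eigenvalues $\pm\sqrt{-1}\mu$ each with multiplicity $n/2$ and is therefore $SU_n$-conjugate to $\mu\Lambda$, where $\Lambda$ is the block matrix in the statement (which has exactly these eigenvalues); allowing $\mu=0$ covers $X=0$ as well. Thus $\mathcal{S}=\{g(a\Lambda)g^{-1}: g\in SU_n,\ a\in\mathbb{R}\}$, and taking the product over the two factors identifies the zero set with $\{(g(a\Lambda)g^{-1},\,h(b\Lambda)h^{-1}): g,h\in SU_n,\ a,b\in\mathbb{R}\}$, which is precisely the $G$-orbit of the plane spanned by $(\Lambda,0)$ and $(0,\Lambda)$.

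I expect the main technical point to be the conjugacy step in the even case: one must check that every $X$ with eigenvalues $\pm\sqrt{-1}\mu$ of equal multiplicity is genuinely conjugate to $\mu\Lambda$ by an element of $SU_n$ rather than merely of $U_n$, and that negative values of $a$ do not enlarge the set beyond the stated orbit. Both are handled by composing the diagonalising unitary with a suitable phase or permutation of determinant one, using that there are $n/2\ge1$ blocks available to absorb a unit scalar; I would record this normalisation explicitly to make the identification with the $G$-orbit unambiguous.
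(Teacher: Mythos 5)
Your proof is correct and follows essentially the same route as the paper's: compute $\Psi(v,v)=2\sqrt{-1}\left(X^2-\tfrac{1}{n}\tr(X^2)\mathrm{Id},\,Y^2-\tfrac{1}{n}\tr(Y^2)\mathrm{Id}\right)$, reduce to the condition that $X^2$ and $Y^2$ are scalar multiples of the identity, and then use the spectral analysis of traceless skew-Hermitian matrices to conclude $X=0$ for $n$ odd, or eigenvalues $\pm\sqrt{-1}\mu$ with equal multiplicity and hence $SU_n$-conjugacy to a multiple of $\Lambda$ for $n$ even. Your explicit handling of the $U_n$-versus-$SU_n$ conjugation and of negative scalars is a normalisation the paper leaves implicit, but it is not a different argument.
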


\begin{proof}
Suppose that $\Psi\left(x\right) = 0$. 
Write $x = \left(X,Y\right)$ where $X,Y \in \mathfrak{su}_n$. 
Then
\[
\Psi\left(x\right) = 2 \sqrt{-1}\; \left(X^2 - \frac{1}{n} \mathrm{tr}(X^2)\mathrm{Id} , Y^2 - \frac{1}{n} \mathrm{tr}(Y^2)\mathrm{Id} \right).
\]
As $\Psi$ is $G$-equivariant we can assume that $X$ and $Y$ are diagonal. 
The kernel of the linear map $T:\mathrm{Mat}_{n}\left(\mathbb{C}\right) \to \mathrm{Mat}_{n}\left(\mathbb{C}\right)$ given by
$$T(X) = X - \frac{1}{n} \mathrm{tr}(X)\mathrm{Id},$$ 
is the set of multiples of the identity matrix $\mathrm{Id}$.
It follows that $\Psi\left(x\right) = 0$ if and only if each of $X$ and $Y$ satisfy the condition that the squares of their eigenvalues are all equal.
For an element $Z$ of $\mathfrak{su}_n$ (the elements of which are traceless with imaginary eigenvalues) this condition is only satisfied if the eigenvalues are either all zero, or if $n$ is even and the eigenvalues are of the form $a\sqrt{-1}$ and $-a\sqrt{-1}$ for some $a \in \mathbb{R}$, with each occurring with multiplicity $\frac{n}{2}$, in which case $Z$ is $SU_n$-conjugate to a multiple of $\Lambda$.
\end{proof}
\noindent Finally in this section, we note that a very similar proof to that in Lemma \ref{lem:dimhom} yields the following result.
\begin{lemma}\label{lem:su_n_hom}
Let $G = SU_n $ with $n \ge 2$,  identify ${\mathfrak{g} := \mathrm{Lie}(G)}$ with ${\mathfrak{su}_n}$, and consider $\mathfrak{g}$ as a representation of $G$ via the adjoint representation.  
Then the dimension of real vector space $\mathrm{Hom}_{G}\left( s^{2}( \mathfrak{g}) , \mathfrak{g} \right)$ is given by:
\[
\mathrm{dim} \; \mathrm{Hom}_{G}\left( s^{2}(\mathfrak{g}) , \mathfrak{g} \right)
=
\begin{cases}
0 & \; n = 2 \\
1 & \; n \ge 3.
\end{cases}
\]
\end{lemma}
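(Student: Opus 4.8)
The plan is to repeat the argument of Lemma \ref{lem:dimhom} verbatim, with the single factor $SU_n$ in place of $SU_n \times SU_n$. First I would pass to the complexification: set $G_{\mathbb{C}} = SL_n(\mathbb{C})$ and $\mathfrak{g}_{\mathbb{C}} = \mathfrak{sl}_n$, regarded as the adjoint representation of $G_{\mathbb{C}}$. Since $SU_n$ is a maximal compact subgroup of $SL_n(\mathbb{C})$, the same unitarian-trick reasoning used in Lemma \ref{lem:dimhom} gives
\[
\dim_{\mathbb{R}}\,\mathrm{Hom}_{G}\left(s^{2}(\mathfrak{g}),\mathfrak{g}\right) = \dim_{\mathbb{C}}\,\mathrm{Hom}_{G_{\mathbb{C}}}\left(s^{2}(\mathfrak{g}_{\mathbb{C}}),\mathfrak{g}_{\mathbb{C}}\right),
\]
so it suffices to compute the complex dimension on the right.

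Next I would apply Theorem 1 of \cite{Broer}, taking $T$ to be the standard maximal torus of $SL_n(\mathbb{C})$, $W = N_{G_{\mathbb{C}}}(T)/T$ its Weyl group, and $\mathfrak{t} = \mathrm{Lie}(T)$. This produces the isomorphism of complex vector spaces
\[
\mathrm{Hom}_{G_{\mathbb{C}}}\left(s^{2}(\mathfrak{g}_{\mathbb{C}}),\mathfrak{g}_{\mathbb{C}}\right) \cong \mathrm{Hom}_{W}\left(s^{2}(\mathfrak{t}),\mathfrak{t}\right),
\]
reducing the problem to a computation for the symmetric group $W \cong S_n$. Here $\mathfrak{t}$, the space of traceless diagonal matrices with $S_n$ acting by permutation of the entries, is precisely the standard $(n-1)$-dimensional irreducible representation $V$; this replaces the two-factor decomposition \eqref{eqn:tdecomp} used before.

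It then remains to determine $\dim_{\mathbb{C}}\,\mathrm{Hom}_{S_n}(s^{2}(V),V)$, which by Schur's Lemma is the multiplicity of the irreducible $V$ inside $s^{2}(V)$. For $n = 2$ one has $V = U$ and $s^{2}(V) = V^{\otimes 2} = \mathbb{C}$, which contains no copy of $V = U$, giving dimension $0$. For $n \ge 3$, the decomposition of $s^{2}(V)$ recorded in the proof of Lemma \ref{lem:dimhom} (via Exercise 4.19 of \cite{FultonHarris}) shows that $V$ occurs in $s^{2}(V)$ with multiplicity exactly one, giving dimension $1$. The only point requiring any care is the degenerate case $n = 2$, where $V$ coincides with the alternating representation $U$ and the Specht module $V_{(n-2,2)}$ is undefined; this case is precisely what forces the Hom-space to vanish and must be treated separately, exactly as in Lemma \ref{lem:dimhom}. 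Everything else is a direct specialization of the earlier argument, so I do not anticipate any genuine obstacle.
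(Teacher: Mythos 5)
Your proposal is correct and is essentially the paper's own proof: the paper merely remarks that ``a very similar proof to that in Lemma \ref{lem:dimhom} yields the result,'' and your argument --- complexify to $SL_n(\mathbb{C})$, invoke Broer's theorem to reduce to $W \cong S_n$ acting on $\mathfrak{t} \cong V$, and count the multiplicity of $V$ in $s^{2}(V)$ via Schur's Lemma, treating $n=2$ separately --- is exactly that specialization. (Minor note: the decomposition recorded in Lemma \ref{lem:dimhom} should read $s^{2}(V) \cong \mathbb{C} \oplus V \oplus V_{(n-2,2)}$ with the trivial rather than the alternating summand, but this does not affect the multiplicity of $V$ and hence neither your count nor the paper's.)
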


\section{Calculations for $SU_n$}\label{sec:SU(n)calcs}
\noindent In this section we give the proof of the Theorem \ref{ThmA} using the strategy outlined in the previous section; here we focus on step (3) - demonstrating that the obstruction map does not vanish identically.  It is important to fix conventions at this stage to ensure the correct calculation of various constants. The Lie algebra $\mathfrak{su}_n$ is the space of skew-Hermitian $n\times n$ matrices.  We endow $\mathfrak{su}_n$ with the Euclidean inner product 
$$\langle A,B \rangle:= -\tr(AB).$$
This inner product is $(1/2n)$ times the one induced by the Killing form and so the resulting bi-invariant Einstein metric has Einstein constant $\lambda=n/2$ (as the Killing form induces a metric with Einstein constant $1/4$).\\
\\
\noindent To construct the EIDs given by the Gasqui--Goldschmidt construction (\ref{h_def}) we require a tensor $\sigma\in s^{3}(\mathfrak{su}(n))^{SU_n}$.  Up to scale, there is only one such tensor given by
\begin{equation}\label{3tens_su(n)}
\sigma(X,Y,Z) := \sqrt{-1}\left(\tr(XYZ)+\tr(XZY)\right), 
\end{equation}
where $X,Y, Z\in \mathfrak{su}_n.$\\
\\
The following result allows us to reduce computing Koiso's obstruction (\ref{Koiobs}) to a pointwise calculation (rather than having to integrate over the whole manifold).
\begin{lemma}\label{LeftinvLem}
Let $\eta \in \mathfrak{su}_n$ and denote by $\tilde{\eta}$ the corresponding left-invariant (resp. right-invariant) extension to a vector field on $SU_n$.  Then the tensors $h_{\eta}, \nabla h_{\eta}$ and $\nabla^{2}h_{\eta} $ are left-invariant  (resp. right-invariant).
\end{lemma}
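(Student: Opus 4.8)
The plan is to reduce the statement to two elementary invariance facts: that the symmetric $3$-tensor field $\sigma$ of (\ref{3tens_su(n)}) is bi-invariant, and that contracting a bi-invariant tensor field with a left-invariant (resp. right-invariant) vector field produces a left-invariant (resp. right-invariant) tensor field. Since $h_{\eta} = \iota_{\tilde\eta}\sigma$ by the definition (\ref{h_def}), the lemma follows immediately once these two facts are in hand.

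First I would verify that $\sigma$ is bi-invariant as a tensor field on $SU_n$. Realising $SU_n$ as the symmetric space $G/K$ with $G = SU_n\times SU_n$ acting by $(a,b)\cdot g = agb^{-1}$ and $K$ the diagonal subgroup, the field $\sigma=\sigma(q)$ is $G$-invariant by construction, and $G$-invariance here is precisely invariance under both left and right translations. Concretely, the value of $\sigma$ at the identity given by (\ref{3tens_su(n)}) is $\mathrm{Ad}$-invariant because $\tr$ is invariant under conjugation; a left-invariant field whose value at $e$ is $\mathrm{Ad}$-invariant is automatically right-invariant as well, so $\sigma$ is bi-invariant. In particular $L_a^{\ast}\sigma = \sigma$ and $R_a^{\ast}\sigma = \sigma$ for all $a\in SU_n$.

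Next I would invoke the naturality of interior contraction: for any diffeomorphism $\phi$, covariant tensor $T$, and vector field $V$ one has $\phi^{\ast}(\iota_{V} T) = \iota_{\phi^{\ast}V}(\phi^{\ast}T)$. Taking $\phi = L_a$, $T = \sigma$, and $V = \tilde\eta$ the left-invariant extension of $\eta$, left-invariance of $\tilde\eta$ gives $L_a^{\ast}\tilde\eta = \tilde\eta$, and combined with $L_a^{\ast}\sigma = \sigma$ this yields $L_a^{\ast}h_{\eta} = L_a^{\ast}(\iota_{\tilde\eta}\sigma) = \iota_{\tilde\eta}\sigma = h_{\eta}$ for every $a$; that is, $h_{\eta}$ is left-invariant. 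The right-invariant case is verbatim the same with $L_a$ replaced by $R_a$ and $\tilde\eta$ the right-invariant extension, using $R_a^{\ast}\sigma = \sigma$ and $R_a^{\ast}\tilde\eta = \tilde\eta$.

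The only genuine content is the bi-invariance of $\sigma$: it is exactly the $\mathrm{Ad}$-invariance of the $3$-form at $e$ that upgrades left-invariance to invariance under both factors of the isometry group, and this is what makes the two halves of the statement hold simultaneously. The remaining step is formal, though I would state the pushforward/pullback conventions precisely (noting that $L_a$-invariance of $\tilde\eta$ is equivalent to $L_a^{\ast}\tilde\eta = \tilde\eta$) to avoid any direction-of-transport confusion. This lemma is what later permits Koiso's obstruction integral to be evaluated pointwise, since the invariance of $h_{\eta}$ forces the relevant scalar integrands to be constant on $SU_n$.
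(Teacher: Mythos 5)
Your proposal is correct and follows essentially the same route as the paper's proof: both reduce the claim to pulling back $h_{\eta}=\iota_{\tilde\eta}\sigma$ under translation and invoking the translation-invariance of $\sigma$ together with the invariance of $\tilde\eta$, which you package as naturality of contraction while the paper writes out the same computation pointwise. The only addition is your explicit justification that $\sigma$ is bi-invariant (via $\mathrm{Ad}$-invariance of its value at $e$), a fact the paper's proof uses implicitly from the $G$-invariance of $\sigma(q)$ in its construction.
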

\begin{proof}
We shall demonstrate the left-invariant statement. Let $g,k\in SU_n$ and let $X,Y \in  \mathfrak{X}(SU_n)$. We compute
$$L_{g}^{\ast}(h_{\eta})(X,Y)(k) = h_{\eta}(L_{g\ast}X,L_{g\ast}Y)(gk) = \sigma(\tilde{\eta}(gk),L_{g\ast}X,L_{g\ast}Y)(gk),$$ 
and thus, as $\tilde{\eta}$ is left-invariant, we obtain
$$L_{g}^{\ast}(h_{\eta})(X,Y)(k) = L_{g}^{\ast}(\sigma)(\tilde{\eta},X,Y)(k) =\sigma(\tilde{\eta},X,Y)(k) =  h_{\eta}(X,Y)(k). $$
The result follows by noting that left or right invariance is preserved by taking covariant derivatives. 
\end{proof}

A corollary of Lemma \ref{LeftinvLem} is that the map $\psi$ must lie in the one dimensional subspace of maps that are invariant under the involution swapping the two factors of $\mathfrak{g}=\mathfrak{su}_{n}\oplus\mathfrak{su}_{n}$. Each factor corresponds to the extension of $\mathfrak{su}_{n}$ as left-invariant or right-invariant vector fields and Lemma \ref{LeftinvLem} demonstrates that the obstruction map $\psi$ must take the same value on either extension.  This means the obstruction is a multiple of the map $\Psi$ described in Lemma \ref{lem:mapsHom}.

\subsection{Results in $\mathfrak{su}_n$}

If we use the previous lemma to produce a left-invariant tensor $h_{\eta}$ and choose the identity as the point in $SU_n$  at which to perform calculations, the algebra we need to do takes place in $\mathfrak{su}(n)$. In this subsection we collect the relevant results about the Euclidean vector space ${(\mathfrak{su}_n, \langle \cdot , \cdot \rangle)}$.  All of the following lemmata are proved directly using straightforward (if tedious) matrix algebra; most of the proofs are omitted.\\ 
\\
In order to perform calculations, we fix an orthonormal basis of $\mathfrak{su}_n$.  \begin{lemma}\label{ONBofsu_n} 
For $1\leq k\leq n-1$, define matrices
$$
T_{k} = \frac{\sqrt{-1}}{\sqrt{k(k+1)}}\mathrm{Diag}\left(\underbrace{1,1,\dots,1}_{k \ \mathrm{terms}},-k,0,\dots,0\right),
$$ 
and,
for $1\leq k<l \leq n$, define matrices $E^{r}(k,l)$ and $E^{c}(k,l)$  by
$$(E^{r}(k,l))_{pq} = \frac{1}{\sqrt{2}}\left(\delta_{kp}\delta_{lq}-\delta_{kq}\delta_{lp} \right),$$ 
and
$$(E^{c}(k,l))_{pq} = \frac{\sqrt{-1}}{\sqrt{2}}\left(\delta_{kp}\delta_{lq}+\delta_{kq}\delta_{lp} \right).
$$
Then the matrices 
$$
\mathcal{B} = \{T_{1},T_{2},\dots,T_{n-1}\}\cup \{E^{r}(k,l),E^{c}(k,l) \}_{1\leq k<l \leq n},
$$
form an orthonormal basis of $(\mathfrak{su}(n), \langle \cdot, \cdot \rangle)$ with the $T_{k}$ spanning to the Lie subalgebra of a maximal torus generated by the diagonal matrices in $SU_n$.
\end{lemma}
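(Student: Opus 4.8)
The plan is to exploit the fact that on skew-Hermitian matrices the inner product $\langle A,B\rangle = -\tr(AB)$ coincides with the Hilbert--Schmidt form. Indeed, for $A,B\in\mathfrak{su}_n$ one has $B^{\ast}=-B$, so $\langle A,B\rangle = \tr(AB^{\ast}) = \sum_{p,q}A_{pq}\overline{B_{pq}}$, and this quantity is real on $\mathfrak{su}_n$. Once this identity is in hand, every inner product becomes a finite coordinate-wise sum, and since each proposed basis element is supported on very few matrix entries, all the verifications become transparent.

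First I would check that each proposed matrix genuinely lies in $\mathfrak{su}_n$. Each $T_k$ is $\sqrt{-1}$ times a real diagonal matrix, hence skew-Hermitian, and its trace is $\frac{\sqrt{-1}}{\sqrt{k(k+1)}}\left(k\cdot 1 + (-k)\right) = 0$. The matrix $E^{r}(k,l)$ is real and antisymmetric, hence skew-Hermitian with vanishing off-diagonal trace, while $E^{c}(k,l)$ is $\sqrt{-1}$ times a real symmetric matrix, hence also skew-Hermitian and trace-free.

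Next I would establish orthonormality by sorting the pairs into blocks according to the type of generator. The diagonal generators $T_k$ are orthogonal to every $E^{r}$ and $E^{c}$ since the latter are supported strictly off the diagonal, so these pairings vanish immediately. For two off-diagonal generators indexed by distinct pairs $(k,l)\ne(k',l')$ with $k<l$ and $k'<l'$, the supports $\{(k,l),(l,k)\}$ are disjoint, giving inner product $0$; when the index pairs agree, a direct count of the two surviving terms gives $\langle E^{r},E^{r}\rangle = \langle E^{c},E^{c}\rangle = \tfrac12+\tfrac12 = 1$. The cross terms $\langle E^{r}(k,l),E^{c}(k,l)\rangle$ vanish because the two contributions at $(k,l)$ and $(l,k)$ cancel, which reflects the fact that a real antisymmetric matrix is orthogonal to an imaginary symmetric one. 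The only computation requiring the precise normalisation is the diagonal block: writing $T_k = \frac{\sqrt{-1}}{\sqrt{k(k+1)}}D_k$ with $D_k$ real diagonal, one finds $\langle T_k,T_k\rangle = \frac{1}{k(k+1)}\left(k + k^2\right) = 1$, while for $k<m$ the matrix $D_kD_m$ has trace $k - k = 0$, so that $\langle T_k,T_m\rangle = 0$.

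Finally I would count dimensions: there are $(n-1)$ matrices $T_k$ and $2\binom{n}{2}=n(n-1)$ matrices of type $E^{r}$ and $E^{c}$, for a total of $(n-1)+n(n-1)=n^2-1=\dim\mathfrak{su}_n$. An orthonormal set of this cardinality is automatically a basis. Since the $T_k$ are $n-1$ linearly independent diagonal (hence mutually commuting) traceless skew-Hermitian matrices, they span the $(n-1)$-dimensional Cartan subalgebra of diagonal matrices, as claimed. The main obstacle here is not conceptual but bookkeeping: the single genuinely normalisation-dependent step is the mutual orthogonality of the $T_k$, and the cleanest route is the trace computation $\tr(D_kD_m)=0$ indicated above rather than summing entries by hand.
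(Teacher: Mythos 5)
Your proof is correct, and it is exactly the kind of argument the paper has in mind: the paper omits the proof of this lemma, stating only that it follows by ``straightforward (if tedious) matrix algebra,'' which is precisely the direct verification you carry out. Your organisation of the computation via the Hilbert--Schmidt identity $\langle A,B\rangle = \tr(AB^{\ast}) = \sum_{p,q}A_{pq}\overline{B_{pq}}$ on skew-Hermitian matrices, together with the dimension count $(n-1)+n(n-1)=n^{2}-1$, is a clean way to package that algebra, and all the individual computations (norms, vanishing cross terms, $\tr(D_kD_m)=0$) check out.
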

We see from the form of the tensor $\sigma$ in Equation (\ref{3tens_su(n)}), it will be useful to consider various quantities of the form $XY+YX$ with $X,Y\in \mathfrak{su}(n)$. To do this, we introduce a small variation on the notation for the basis given in Lemma \ref{ONBofsu_n}. Given an unordered pair $\{k,l\}$, if $a=\min\{k,l\}$ and $b=\max\{k,l\}$, then we define 
$$E^{r}(\{ k,l\}) = E^{r}(a,b) \qquad  \mathrm{and} \qquad E^{c}(\{ k,l\}) = E^{c}(a,b).$$

\begin{lemma}\label{SquareLem}
Let $T_{k}$, $E^{r}(k,l)$, and $E^{c}(k,l)$ be the orthonormal basis of $(\mathfrak{su}_n,\langle \cdot,\cdot \rangle)$ defined in Lemma \ref{ONBofsu_n}. Then 
\begin{enumerate}[(i)]
\item   If $p=q$, then
$$
T_{p}T_{q}+T_{q}T_{p} = \dfrac{2}{p(p+1)}\mathrm{Diag}\left(\underbrace{-1,-1,\dots, -1}_{p \ \mathrm{terms}},-p^{2},0,\dots,0 \right) 
$$
\item If $p>q$, then 
$$
T_{p}T_{q}+T_{q}T_{p} = \dfrac{2}{\sqrt{pq(p+1)(q+1)}}\mathrm{Diag}\left(\underbrace{-1,-1,\dots, -1}_{q\ \mathrm{terms}},q,0,\dots,0 \right)$$
\item $T_{p}E^{r}(k,l)+E^{r}(k,l)T_{p} = ((T_p)_{kk}+(T_{p})_{ll})E^{r}(k,l)$
\item $T_{p}E^{c}(k,l)+E^{c}(k,l)T_{p} =  ((T_p)_{kk}+(T_{p})_{ll})E^{c}(k,l) $
\item  If $\{k,l\}\neq\{p,q\}$, then $E^{r}(k,l)E^{r}(p,q)+E^{r}(p,q)E^{r}(k,l)  =$
$$ 
 \dfrac{\sqrt{-1}}{\sqrt{2}}\left(\delta_{ql}E^{c}(\{p,k\}) 
 -\delta_{qk}E^{c}(p,l)-\delta_{lp}E^{c}(k,q)+ \delta_{pk}E^{c}(\{ l,q\})\right).
 $$
 
\item  If $k=p$  and  $l=q$, then  $E^{r}(k,l)E^{r}(p,q)+E^{r}(p,q)E^{r}(k,l)  =$
$$
\mathrm{Diag}\left(\underbrace{0,0,\dots,0}_{k-1 \ \mathrm{terms}},-1,\underbrace{0,0,\dots, 0}_{l-k-1 \ \mathrm{terms}},-1,0,0,\dots ,0\right) 
$$

\item  If $\{k,l\}\neq\{p,q\}$, $E^{r}(k,l)E^{c}(p,q)+E^{c}(p,q)E^{r}(k,l) = $
$$
\dfrac{\sqrt{-1}}{\sqrt{2}}\left(\pm\delta_{ql}E^{r}(\{p,k\})+\delta_{pl}E^{r}(k,q)+\delta_{kq}E^{r}(p,l)\pm\delta_{pk}E^{r}(\{q,l\}) \right).
$$
\item If $  k=p \ \mathrm{and} \ l=q$, then $E^{r}(k,l)E^{c}(p,q)+E^{c}(p,q)E^{r}(k,l) =0$.

\item If $\{k,l\}\neq\{p,q\},$, then $E^{c}(k,l)E^{c}(p,q)+E^{c}(p,q)E^{c}(k,l) =$
$$\dfrac{\sqrt{-1}}{\sqrt{2}}\left(\delta_{ql}E^{c}(\{p,k\})+\delta_{pl}E^{c}(k,q)+\delta_{kq}E^{c}(p,l)+\delta_{pk}E^{c}(\{q,l\})\right)
$$
\item If $k=p$ and $l=q$, then $E^{c}(k,l)E^{c}(p,q)+E^{c}(p,q)E^{c}(k,l) =$
$$
\mathrm{Diag}\left(\underbrace{0,0,\dots,0}_{k-1 \ \mathrm{tems}},-1,\underbrace{0,0,\dots, 0}_{l-k-1 \ \mathrm{terms}},-1,0,0,\dots 0\right) 
 $$
\end{enumerate}
\end{lemma}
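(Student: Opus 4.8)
The plan is to prove every identity by direct computation in the matrix algebra $\mathrm{Mat}_n(\mathbb{C})$, reducing everything to the single multiplication rule for matrix units. Write $e_{ij}$ for the matrix with $(e_{ij})_{pq} = \delta_{ip}\delta_{jq}$, so that $e_{ij}e_{kl} = \delta_{jk}e_{il}$. In these terms the basis elements of Lemma \ref{ONBofsu_n} read $T_p = \tfrac{\sqrt{-1}}{\sqrt{p(p+1)}}\left(\sum_{a=1}^p e_{aa} - p\, e_{(p+1)(p+1)}\right)$, together with $E^r(k,l) = \tfrac{1}{\sqrt 2}(e_{kl} - e_{lk})$ and $E^c(k,l) = \tfrac{\sqrt{-1}}{\sqrt 2}(e_{kl} + e_{lk})$ for $k<l$. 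The crucial structural point to record at the outset is that $E^c(k,l)$ is symmetric under $k \leftrightarrow l$ whereas $E^r(k,l)$ is antisymmetric; this is what governs the reindexing in the off-diagonal cases.

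First I would dispatch the diagonal cases (i)--(iv). Since $T_p$ and $T_q$ are diagonal they commute, so the anticommutator in (i) and (ii) is simply $2T_pT_q$, and multiplying the two diagonals entry-by-entry (distinguishing $q+1 \le p$ from $q+1 > p$) gives the stated matrices, the factor $(\sqrt{-1})^2 = -1$ accounting for the signs. For (iii) and (iv) I would use that, for any diagonal $D$ and any matrix $M$, one has $(DM + MD)_{ab} = (D_{aa} + D_{bb})M_{ab}$; applying this with $D = T_p$ and $M = E^r(k,l)$ or $E^c(k,l)$, whose only nonzero entries sit at $(k,l)$ and $(l,k)$, yields precisely the scaling by $(T_p)_{kk} + (T_p)_{ll}$.

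The substance is in the off-diagonal cases (v)--(x). For each I would expand the product of two basis elements into four matrix-unit products and apply $e_{ij}e_{kl}=\delta_{jk}e_{il}$; for instance $E^r(k,l)E^r(p,q) = \tfrac12(\delta_{lp}e_{kq} - \delta_{lq}e_{kp} - \delta_{kp}e_{lq} + \delta_{kq}e_{lp})$, and the reversed product is obtained by swapping $(k,l)\leftrightarrow(p,q)$. Adding the two and collecting by Kronecker delta produces symmetric combinations $e_{ab}+e_{ba}$ in the $E^rE^r$, $E^cE^c$ and mixed cases, which I then re-express via $e_{ab}+e_{ba} = -\sqrt{-1}\,\sqrt 2\,E^c(\{a,b\})$. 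The hypothesis $\{k,l\}\ne\{p,q\}$ guarantees that at most one of the four deltas $\delta_{lp},\delta_{lq},\delta_{kp},\delta_{kq}$ can be nonzero (two coincidences would force the pairs to agree), so the output is a single off-diagonal basis vector, matching (v), (vii), (ix). The coincident case $\{k,l\}=\{p,q\}$ is computed the same way, but now the surviving terms are $e_{kk}+e_{ll}$, giving the diagonal answers in (vi), (viii), (x); in particular (viii) vanishes because the symmetric and antisymmetric contributions cancel.

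I expect the main obstacle to be purely organisational: keeping track of the overall sign and of the $\sqrt{-1}$ and $\tfrac{1}{\sqrt 2}$ normalisations, and above all correctly re-expressing the result in the \emph{ordered} orthonormal basis. Because $E^c$ is symmetric in its indices, rewriting $e_{ab}+e_{ba}$ as a multiple of $E^c(\{a,b\})$ is insensitive to whether $a<b$, so (v) and (ix) are clean. In the mixed product $E^rE^c + E^cE^r$ of (vii), however, the antisymmetric matrix $E^r$ reappears, and since $E^r(b,a) = -E^r(a,b)$ the reindexing into increasing order introduces a sign; this is exactly the origin of the $\pm$ in the stated formula, and verifying that the sign is determined by the relative order of the coincident indices is the one place requiring genuine care. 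Everything else is bookkeeping of the kind the remark preceding the lemma already flags as ``straightforward (if tedious) matrix algebra''.
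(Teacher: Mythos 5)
Your proposal is correct: each identity checks out under your matrix-unit expansion (I verified the normalisations, the single-surviving-delta argument in the off-diagonal cases, and the origin of the $\pm$ signs in (vii)), and it is exactly the ``straightforward (if tedious) matrix algebra'' the paper invokes while omitting the details. Your use of $e_{ij}e_{kl}=\delta_{jk}e_{il}$ together with the symmetry of $E^{c}$ versus the antisymmetry of $E^{r}$ under index exchange is the natural way to organise that computation, so there is nothing to add.
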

We will see that the precise signs of the terms in (vii) will not matter; the real utility of Lemma \ref{SquareLem} is that if we choose basis elements $X,Y, \in \mathcal{B}$ with $X\neq Y$, then 
$$\sqrt{-1}(XY+YX) \perp T_{n-1}.$$
Thus choosing $\eta$ to be a convenient multiple of $T_{n-1}$ turns out to greatly simplify calculations. 
 
\begin{lemma}\label{HdiagLem}
	Let $\eta = \sqrt{-1}\mathrm{Diag}(\underbrace{1,1,\dots,1}_{n-1},-(n-1)) $ and let  $H\in s^{2}(\mathfrak{g}^{\ast})$ be defined by
	$$H(X,Y) = \sqrt{-1}\hspace{1pt}\mathrm{tr}(\eta(XY+YX)).$$
Then $H$ is diagonal with respect to the orthonormal basis $\mathcal{B}$ defined in Lemma \ref{ONBofsu_n}.  Furthermore	
\begin{align} 
H(T_{p},T_{p}) = 2 \qquad \mathrm{for} \quad p<n-1, \label{H_diag_eqns1}\\
H(T_{n-1},T_{n-1})  = -2(n-2),\\
H(E^{r}(k,l),E^{r}(k,l))=H(E^{c}(k,l),E^{c}(k,l))=2 \qquad \mathrm{for} \quad l<n,\label{H_diag_eqns3}\\
H(E^{r}(k,n),E^{r}(k,n))=H(E^{c}(k,n),E^{c}(k,n))=-(n-2)\label{H_diag_eqns4}.
\end{align}	
\end{lemma}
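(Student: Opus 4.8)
The plan is to exploit the fact that $\eta$ is simply a scalar multiple of the single basis element $T_{n-1}$. Comparing with Lemma~\ref{ONBofsu_n} one reads off $\eta = \sqrt{n(n-1)}\,T_{n-1}$. Since the inner product is $\langle A,B\rangle=-\tr(AB)$, I would first rewrite the defining quantity as a pairing against this fixed vector:
\[
H(X,Y) = \sqrt{-1}\,\tr\!\big(\eta(XY+YX)\big) = -\sqrt{n(n-1)}\,\big\langle T_{n-1},\,\sqrt{-1}(XY+YX)\big\rangle .
\]
This reduces the entire lemma to pairing the anticommutators $XY+YX$, all of which are computed explicitly in Lemma~\ref{SquareLem}, against the one vector $T_{n-1}$.

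With this reformulation the off-diagonal entries are essentially free. For distinct $X,Y\in\mathcal{B}$, the remark immediately following Lemma~\ref{SquareLem} records precisely that $\sqrt{-1}(XY+YX)\perp T_{n-1}$, so $H(X,Y)=0$ and $H$ is diagonal in $\mathcal{B}$. (If one prefers not to invoke the remark, the same conclusion can be extracted case-by-case from parts (ii)--(v) and (vii)--(ix): each such anticommutator is either supported off the diagonal, or is a diagonal traceless matrix that is constant on the first $n-1$ slots and vanishes in the $n$-th, and is therefore orthogonal to $T_{n-1}$.)

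It then remains to evaluate the diagonal values $H(X,X)=2\sqrt{-1}\,\tr(\eta X^2)$, where $2X^2=XX+XX$ is given explicitly, and diagonally, by the relevant diagonal case of Lemma~\ref{SquareLem}: part (i) for $X=T_p$, part (vi) for $X=E^r(k,l)$, and part (x) for $X=E^c(k,l)$. Because $\eta=\sqrt{-1}\,\mathrm{Diag}(1,\dots,1,-(n-1))$ is itself diagonal, each trace is just the dot product of two diagonals, and the values (\ref{H_diag_eqns1})--(\ref{H_diag_eqns4}) drop out. The only structural point is the distinguished role of the $n$-th slot, where $\eta_{nn}=-(n-1)$ rather than $+1$: this is exactly what forces the case splits $p<n-1$ versus $p=n-1$ for the torus directions and $l<n$ versus $l=n$ for the root directions. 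For instance, when $p<n-1$ the matrix $2T_p^2$ from part (i) is supported in the first $p+1\le n-1$ slots, where $\eta$ has constant weight $\sqrt{-1}$, and the diagonal entries $-1$ (with multiplicity $p$) and $-p^2$ pair against $\eta$ to give $H(T_p,T_p)=2$; when $p=n-1$ the $-p^2$ entry lands in the $n$-th slot, is weighted by $-(n-1)$, and this flips the outcome to $-2(n-2)$.

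I expect no genuine obstacle here. Once $H$ is written as a pairing with $T_{n-1}$, the off-diagonal vanishing is handed to us by the preceding remark, and the diagonal entries are a finite, wholly mechanical bookkeeping of diagonal dot products. The only places demanding vigilance are the tracking of the $\sqrt{-1}$ factors (so that, for example, $2\sqrt{-1}\cdot(-\sqrt{-1})=2$) and the special weight carried by the final diagonal entry of $\eta$, which is the source of every sign change in the stated formulas.
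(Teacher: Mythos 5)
Your proposal is correct and follows essentially the same route as the paper's proof: recognising $\eta=\sqrt{n(n-1)}\,T_{n-1}$, recasting $H(X,Y)$ as a pairing of the anticommutators from Lemma \ref{SquareLem} against $T_{n-1}$ so that the off-diagonal entries vanish by the orthogonality remark following that lemma, and then evaluating the diagonal entries by direct trace computations against the diagonal matrix $\eta$. The only quibble is in your optional case-by-case aside: the anticommutator $T_pT_q+T_qT_p$ with $p\neq q$ is traceless and supported in the first $n-1$ diagonal slots (where $T_{n-1}$ is constant), not itself constant on those slots, but this does not affect the argument.
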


\begin{proof}
We begin by noting that $\eta = \sqrt{n(n-1)}T_{n-1}$ and 
$$H(X,Y) = \langle \eta, \sqrt{-1}(XY+YX) \rangle.$$
Using Lemma \ref{SquareLem} we see that all the non-diagonal terms of $H$ vanish.  The remaining diagonal terms are easily calculated.
\end{proof}

\noindent In order to compute the terms of Koiso's obstruction (\ref{Koiobs}) that contain covariant derivatives we will need information about Lie brackets $[\cdot ,\eta]$. The following is again straightforward algebra.

\begin{lemma}\label{LieBracketLem1}
Let $\eta$ be as in Lemma \ref{HdiagLem}. Then for any ${X\in \mathrm{Mat}(\mathbb{C})^{n\times n}}$ we have
$$[X,\eta] = n\sqrt{-1}\left(
\begin{array}{ccccc}
0 & 0 & \dots & 0 & -X_{1n}\\
0 & 0 & \dots & 0 & -X_{2n}\\
\vdots & \vdots & \ddots & \vdots &\vdots\\
0 & 0 & \dots &0 & -X_{(n-1)n}\\
X_{n1} &X_{n2} & \dots &X_{n(n-1)} & 0 
\end{array}
\right).$$
\end{lemma}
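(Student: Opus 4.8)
The statement is a purely algebraic fact about the matrix commutator, so the plan is to compute $[X,\eta]$ entry by entry. First I would record that $\eta$ is diagonal: writing $\eta = \sqrt{-1}\,D$ with $D = \mathrm{Diag}(d_1,\dots,d_n)$, we have $d_p = 1$ for $p \le n-1$ and $d_n = -(n-1)$. The key observation is that conjugation by a diagonal matrix acts diagonally on matrix entries, so the commutator has a completely explicit entrywise form.

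Concretely, for any $X \in \mathrm{Mat}(\mathbb{C})^{n\times n}$ the product with a diagonal matrix picks out a single diagonal entry: $(X\eta)_{pq} = X_{pq}\,\eta_{qq} = \sqrt{-1}\,d_q\,X_{pq}$ and $(\eta X)_{pq} = \eta_{pp}\,X_{pq} = \sqrt{-1}\,d_p\,X_{pq}$. Subtracting gives the scalar formula
\[
[X,\eta]_{pq} = (X\eta - \eta X)_{pq} = \sqrt{-1}\,(d_q - d_p)\,X_{pq}.
\]
Thus the entire content of the lemma is the elementary computation of the scalar $d_q - d_p$ in each case.

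Next I would run through the cases for $d_q - d_p$. If $p,q \le n-1$ then $d_q - d_p = 1-1 = 0$, and if $p=q$ the difference is trivially $0$; so the entire top-left $(n-1)\times(n-1)$ block, as well as the $(n,n)$ entry, vanishes. If $p \le n-1$ and $q = n$ then $d_q - d_p = -(n-1) - 1 = -n$, giving $[X,\eta]_{pn} = -n\sqrt{-1}\,X_{pn}$, which matches the $(p,n)$ entries $n\sqrt{-1}\cdot(-X_{pn})$ of the claimed matrix. Symmetrically, if $p = n$ and $q \le n-1$ then $d_q - d_p = 1 + (n-1) = n$, giving $[X,\eta]_{nq} = n\sqrt{-1}\,X_{nq}$, matching the last row. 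Assembling these into a single matrix yields exactly the stated expression.

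I do not expect any genuine obstacle here: the result follows from the diagonality of $\eta$ and a short case check on the two nonzero index patterns. The only point requiring mild care is bookkeeping the sign and ensuring the commutator convention $[X,\eta] = X\eta - \eta X$ is the one producing the asymmetry between the last row (coefficient $+n$) and the last column (coefficient $-n$); this is confirmed directly by the case analysis above.
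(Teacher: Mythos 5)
Your proof is correct, and it is exactly the ``straightforward algebra'' the paper alludes to (the paper omits the proof entirely): since $\eta$ is diagonal, the commutator acts entrywise as $[X,\eta]_{pq}=\sqrt{-1}(d_q-d_p)X_{pq}$, and your case check of $d_q-d_p$ reproduces the stated matrix, including the sign asymmetry between the last row and last column under the convention $[X,\eta]=X\eta-\eta X$.
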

The fact that $H$ is diagonal will mean we need only compute very few Lie-bracket-type terms.  The following collects what we will need.

\begin{lemma}\label{LieBracketLem2}
Let $\eta$ be as in Lemma \ref{HdiagLem}. Then
\begin{enumerate}[(i)]
\item for $0<k<l<n$ 
$$[E^{r}(k,l),[E^{r}(k,l),\eta]]=[E^{c}(k,l),[E^{c}(k,l),\eta]]=0,$$
\item $[E^{r}(k,n),[E^{r}(k,n),\eta]]=n\sqrt{-1}\mathrm{Diag}\left(\underbrace{0,0,\dots,0}_{k-1 \ \mathrm{terms}},-1,0,0,\dots,0,1\right),$
\item $[E^{c}(k,n),[E^{c}(k,n),\eta]]=n\sqrt{-1}\mathrm{Diag}\left(\underbrace{0,0,\dots,0}_{k-1 \ \mathrm{terms}},-1,0,0,\dots,0,1\right).$
\end{enumerate}
\end{lemma}
\begin{proof}
Part (i) follows immediately from Lemma \ref{LieBracketLem1} as, for any $0\leq j \leq n$, $${(E^{r}(k,l))_{jn}=(E^{c}(k,l))_{jn}=0},$$ if $l<n$. Parts (ii) and (iii) are also straightforward after using Lemma \ref{LieBracketLem1} to compute the first Lie bracket.
\end{proof}
\subsection{Computing $\mathcal{I}$} In this section we consider the symmetric 2-tensor $h_{\eta}$ that is generated by the inclusion of the left-invariant extension of $\eta$ in $\sigma$ defined by Equation (\ref{3tens_su(n)}). We proceed by computing each of the terms of the obstruction $\mathcal{I}$ given in Equation (\ref{Koiobs}).
\begin{lemma}\label{h_etaLem_1}
Let $\eta$ be as in Lemma \ref{HdiagLem} and let $h_{\eta}=\iota_{\tilde{\eta}}\sigma$, where $\tilde{\eta}$ is the left-invariant extension of $\eta$ and $\sigma$ is the symmetric 3-tensor defined in Equation (\ref{3tens_su(n)}). Then
$$\langle (h_{\eta})^{k}_{i}(h_{\eta})_{kj},(h_{\eta})_{ij} \rangle_{L^{2}} =2(n-1)(n-2)(12-n^{2}) \vol(SU_n),$$
 where $\vol(SU_n)>0$ is the volume of the bi--invariant metric on $SU_n$ induced the inner product $\langle \cdot,\cdot \rangle$ on $\mathfrak{su}(n)$. 
 \end{lemma}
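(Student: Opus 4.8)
The plan is to exploit the left-invariance of $h_\eta$ to collapse the $L^2$-integral to a single pointwise computation at the identity $e\in SU_n$, and then to recognise that the pointwise integrand is nothing but the trace of the cube of the symmetric endomorphism that $h_\eta$ represents. First I would invoke Lemma \ref{LeftinvLem}: since $\tilde\eta$ is the left-invariant extension of $\eta$, the tensor $h_\eta$ is left-invariant, so (writing $h$ for $h_\eta$) the scalar integrand $h_i^{\ k}h_{kj}h^{ij}$ is a constant function on $SU_n$. Hence the displayed $L^2$ inner product equals its value at $e$ multiplied by $\vol(SU_n)$. At $e$ I may evaluate $h_\eta$ on the left-invariant fields generated by $\mathfrak{su}_n$, and by the definition (\ref{3tens_su(n)}) of $\sigma$ we have $h_\eta(X,Y)=\sigma(\eta,X,Y)=\sqrt{-1}\,\tr(\eta(XY+YX))$, which is precisely the bilinear form $H$ of Lemma \ref{HdiagLem}.

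Next I would observe that, because $\mathcal{B}$ is an \emph{orthonormal} basis of $(\mathfrak{su}_n,\langle\cdot,\cdot\rangle)$ by Lemma \ref{ONBofsu_n}, the metric is the identity in this frame and raising indices is numerically trivial. The pointwise integrand at $e$ therefore becomes $\sum_{i,j,k}H_{ik}H_{kj}H_{ij}=\tr(H^3)$, where $H$ is now read as its matrix in the frame $\mathcal{B}$. Lemma \ref{HdiagLem} states that this matrix is diagonal, so $\tr(H^3)=\sum_{\alpha}\mu_\alpha^3$ is simply the sum of the cubes of the diagonal entries recorded in (\ref{H_diag_eqns1})--(\ref{H_diag_eqns4}). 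This reduction is where all the geometry sits; everything after it is elementary.

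It then remains to list the eigenvalues with their multiplicities and sum the cubes. The contributions are the value $2$ from $T_p$ with $p<n-1$ (multiplicity $n-2$), the value $-2(n-2)$ from $T_{n-1}$ (multiplicity $1$), the value $2$ from the pairs $E^r(k,l),E^c(k,l)$ with $l<n$ (multiplicity $(n-1)(n-2)$), and the value $-(n-2)$ from $E^r(k,n),E^c(k,n)$ (multiplicity $2(n-1)$); as a consistency check these multiplicities sum to $\dim\mathfrak{su}_n=n^2-1$. Carrying out the cubic sum gives $8(n-2)-8(n-2)^3+8(n-1)(n-2)-2(n-1)(n-2)^3$, which factors as $2(n-2)\bigl(-n^3+n^2+12n-12\bigr)=2(n-1)(n-2)(12-n^2)$, as claimed. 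The only genuine obstacle is bookkeeping: correctly counting the multiplicity of each eigenvalue (in particular keeping the $l<n$ and $l=n$ off-diagonal families separate) and then performing the algebraic simplification without slips, so I would verify both the dimension count $n^2-1$ and the polynomial factorisation explicitly.
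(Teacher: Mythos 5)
Your proof is correct and follows essentially the same route as the paper: left-invariance collapses the $L^{2}$-pairing to $\tr(H^{3})\,\vol(SU_n)$ computed at the identity in the orthonormal basis $\mathcal{B}$, after which one sums the cubes of the diagonal entries of $H$ recorded in Lemma \ref{HdiagLem}. Your multiplicity bookkeeping is in fact more careful than the paper's intermediate display, whose last term reads $-(n-1)(n-2)^{3}$ where it should be $-2(n-1)(n-2)^{3}$ (counting both $E^{r}(k,n)$ and $E^{c}(k,n)$, as you do); your sum, unlike that display, simplifies exactly to the stated answer $2(n-1)(n-2)(12-n^{2})\vol(SU_n)$.
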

\begin{proof}
The tensor $h_{\eta}$ is left-invariant by Lemma \ref{LeftinvLem}.  Hence we can compute at the identity and so 
$$
\langle (h_{\eta})^{k}_{i}(h_{\eta})_{kj},(h_{\eta})_{ij} \rangle_{L^{2}} = \langle H_{i}^{k}H_{kj},H_{ij}\rangle \vol(SU_n),
$$
where $H$ is the symmetric bilinear form defined in Lemma \ref{HdiagLem} (we also use $\langle \cdot, \cdot \rangle$ to denote the inner product on $s^{2}(\mathfrak{su}(n))$ induced by the one on $\mathfrak{su}(n)$). As $H$ is diagonal with respect to the orthonormal basis $\mathcal{B}$ defined in Lemma \ref{ONBofsu_n} we can compute
$$\langle H_{i}^{k}H_{kj},H_{ij}\rangle = \mathrm{tr} (H^{3}), $$
where the trace on the right-hand side is the usual trace of a matrix. Using equations (\ref{H_diag_eqns1})-(\ref{H_diag_eqns4}) we can compute
$$\mathrm{tr} (H^{3})= ((n-2)\times 8)-(8(n-2)^{3}) + ((n-2)(n-1)\times 8)-((n-1)\times(n-2)^3) .$$
The result follows after simplification.
\end{proof}

\begin{lemma}\label{h_etaLem_2}
Let $h_{\eta}$ be as in Lemma \ref{h_etaLem_1}. Then  
$${\langle\nabla_{i}\nabla_{j}(h_{\eta})_{kl},(h_{\eta})_{ij}(h_{\eta})_{kl}\rangle_{L^{2}}  =[(n - 2)^{2}(n-1)n(n+2)]\vol(SU_n).
}
$$
\end{lemma}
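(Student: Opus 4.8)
The plan is to reduce the $L^{2}$-pairing to a single pointwise computation at the identity and then exploit the parallelism of $\sigma$ to convert the two covariant derivatives on $h_{\eta}$ into a double Lie bracket acting on $\eta$. First I would note that, since $h_{\eta}$ is left-invariant by Lemma \ref{LeftinvLem} and the metric is bi-invariant, the integrand $\langle \nabla_{i}\nabla_{j}h_{kl},h_{ij}h_{kl}\rangle$ is constant; hence the quantity equals its value at the identity times $\vol(SU_n)$, and all subsequent work takes place in $\mathfrak{su}_n$ with the orthonormal basis $\mathcal{B}$ of Lemma \ref{ONBofsu_n}.

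Next I would compute $\nabla^{2}h_{\eta}$ algebraically. The tensor $\sigma$ is $\mathrm{Ad}$-invariant, hence parallel, and the left-invariant extension $\tilde{\eta}$ is a Killing field. Because $\sigma$ is parallel, all derivatives hit $\tilde\eta$, so $(\nabla^{2}_{E_i,E_j}h_{\eta})(E_k,E_l)=\sigma(\nabla^{2}_{E_i,E_j}\tilde\eta,E_k,E_l)$. Using the bi-invariant connection formula $\nabla_{X}Y=\tfrac12[X,Y]$ on left-invariant fields (equivalently the Killing identity $\nabla^{2}_{X,Y}\xi=R(\xi,X)Y$ with $R(X,Y)Z=\tfrac14[[X,Y],Z]$) gives $\nabla^{2}_{E_i,E_j}\tilde\eta=\tfrac14[E_j,[E_i,\eta]]$ at the identity, so $\nabla_i\nabla_j h_{kl}=\tfrac14\,\sigma([E_j,[E_i,\eta]],E_k,E_l)$.

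The key simplification is that $H$ is diagonal in $\mathcal{B}$ (Lemma \ref{HdiagLem}), so contracting against $H_{ij}H_{kl}$ forces $j=i$ and $l=k$, collapsing the quadruple sum to $\tfrac14\sum_{i,k}\sigma([E_i,[E_i,\eta]],E_k,E_k)H_{ii}H_{kk}$. Here Lemma \ref{LieBracketLem2} does the heavy lifting: the double bracket $[E_i,[E_i,\eta]]$ vanishes unless $E_i$ is one of $E^{r}(a,n)$ or $E^{c}(a,n)$, in which case it equals $n\sqrt{-1}\,D_a$ with $D_a$ the diagonal matrix having $-1$ in slot $a$ and $+1$ in slot $n$. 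Since $\sigma(D_a,E_k,E_k)=2\sqrt{-1}\,\tr(D_a E_k^{2})$ and $\tr(D_a E_k^{2})=(E_k^{2})_{nn}-(E_k^{2})_{aa}$, the inner sum reduces to $G_n-G_a$, where $G_m:=\sum_k (E_k^{2})_{mm}H_{kk}$. Using the squares $(E^{r}(a,b))^{2}=(E^{c}(a,b))^{2}=-\tfrac12(e_{aa}+e_{bb})$ and the diagonal $T_p^{2}$, together with the diagonal values of $H$, one finds $G_m=-\tfrac{(n-2)(n+2)}{n}$ for $m<n$ and $G_n=\tfrac{(n-1)(n-2)(n+2)}{n}$.

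Assembling these, with $H_{ii}=-(n-2)$ on each contributing basis vector and multiplicity $2$ from the pair $E^{r},E^{c}$, yields $\tfrac14\sum_{a=1}^{n-1}2\cdot(-(n-2))\cdot(-2n)(G_n-G_a)=n(n-2)\big[(n-1)G_n-\sum_{a=1}^{n-1}G_a\big]$, and the bracket telescopes to $(n-1)(n-2)(n+2)$, giving the stated $(n-2)^{2}(n-1)n(n+2)\,\vol(SU_n)$. I expect the main obstacle to be the second step: cleanly justifying $\nabla_i\nabla_j h_{kl}=\tfrac14\sigma([E_j,[E_i,\eta]],E_k,E_l)$ with the correct constant, since this requires carefully matching the second-covariant-derivative convention in the obstruction formula to the bi-invariant curvature and Killing-field identities. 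The remainder is bookkeeping, where small errors in the telescoping sums for $G_m$ or in the multiplicities would corrupt the final constant.
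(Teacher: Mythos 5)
Your proposal is correct and follows essentially the same route as the paper: reduce to a pointwise computation at the identity via left-invariance, use parallelism of $\sigma$ and the bi-invariant connection to write $\nabla_i\nabla_j h_{kl}=\tfrac14\sigma([E_i,[E_i,\eta]],E_k,E_l)$ on the diagonal, collapse the sum using the diagonality of $H$, and invoke Lemma \ref{LieBracketLem2} so that only $E^{r}(a,n)$, $E^{c}(a,n)$ contribute; your final bookkeeping via the quantities $G_m=\sum_k (E_k^2)_{mm}H_{kk}$ is just a reorganization of the paper's sums $\langle\sigma(v_a,\cdot,\cdot),H\rangle$ and yields the same constant. One small point in your favour: you consistently use $H_{ii}=-(n-2)$ on the contributing basis vectors, which agrees with Lemma \ref{HdiagLem}, whereas the paper's prose momentarily misstates this value as $-2(n-2)$ (its displayed formula, like yours, uses the correct value).
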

\begin{proof}As  $\sigma$ is parallel, we have (computing at the identity) 
$$\nabla_{i}\nabla_{j}(h_{\eta})_{kl} = \sigma(\nabla_{i}\nabla_{j}\tilde{\eta}-\nabla_{\nabla_{E_{i}}E_{j}}\tilde{\eta},E_{k},E_{l}),$$
where $\{E_{i}\}$ is any basis of $\mathfrak{g}$. We are using bi-invariant metric on $SU_n$ so, using left-invariant extensions of the basis $\{E_{i}\}$, we have 
\[
\nabla_{\nabla_{E_{i}}E_{j}}\tilde{\eta} = \frac{1}{4}\sigma([[E_{i},E_{j}],\eta],E_{k},E_{l}),
\]
and
$$\nabla_{i}\nabla_{j}\tilde{\eta} = \frac{1}{4}[E_{i},[E_{j},\eta]].$$
As $\nabla_{\nabla_{E_{i}}E_{j}}\tilde{\eta}$ is antisymmetric in the indices $i$ and $j$ we have
\[
\langle\sigma(\nabla_{\nabla_{E_{i}}E_{j}}\tilde{\eta},E_{k},E_{l}), h_{ij}h_{kl}\rangle_{L^{2}} = 0, 
\]
and thus 
$$
\langle\nabla_{i}\nabla_{j}(h_{\eta})_{kl},(h_{\eta})_{ij}(h_{\eta})_{kl}\rangle_{L^{2}} = \frac{1}{4}\sigma([E_{i},[E_{j},\eta]],E_{k},E_{l}) H_{ij}H_{kl} \vol(SU_n).
$$
If we take $\{E_{i}\} = \mathcal{B}$, and as $H$ is diagonal with respect to the basis  $\mathcal{B}$, we need only consider $E_{i} =E_{j}$ and $E_{k}=E_{l}$ and so  
$$
\langle\nabla_{i}\nabla_{j}(h_{\eta})_{kl},(h_{\eta})_{ij}(h_{\eta})_{kl}\rangle_{L^{2}} = \frac{1}{4}\sigma([E_{i},[E_{i},\eta]],E_{k},E_{k}) H_{ii}H_{kk}.
$$
From Lemma \ref{LieBracketLem2} we see the right-hand side of the previous equation is non-zero only when $E_{i}=E^{r}(k,n)$ or $E_{i} = E^{c}(k,n)$; in either case, ${H_{ii}= -2(n-2)}$ and we  obtain using Lemma \ref{LieBracketLem2}
$$ \frac{1}{4}\langle [E_{i},[E_{j},\eta]] , H_{ij}H_{kl} \rangle = -\left(\frac{n(n-2)}{2}\right)\sum_{k=1}^{n-1}\langle \sigma(v_{k},\cdot,\cdot),H(\cdot,\cdot)\rangle,$$
where 
$$v_{k} =\sqrt{-1}\mathrm{Diag}\left(\underbrace{0,0,\dots,0}_{k-1 \ \mathrm{terms}},-1,0,0,\dots,0,1\right).$$
We split the sum up as follows:
\begin{align*}
\langle \sigma(v_{k},\cdot,\cdot),H(\cdot,\cdot)\rangle  &=  \sum_{i=1}^{n-1}\sigma(v_{k},T_{i},T_{i})H(T_{i},T_{i})\\
 &+ 2\sum_{1\leq i<j<n}\sigma(v_{k},E^{r}(i,j),E^{r}(i,j))\\
 &+ 2\sum_{1\leq i<j<n}\sigma(v_{k},E^{c}(i,j),E^{c}(i,j))\\
&-(n-2) \sum_{i=1}^{n-1}\sigma(v_{k},E^{r}(i,n),E^{r}(i,n))\\
&-(n-2) \sum_{i=1}^{n-1}\sigma(v_{k},E^{c}(i,n),E^{c}(i,n)),
\end{align*}
where we have used Equations (\ref{H_diag_eqns3}) and (\ref{H_diag_eqns4}) to simplify the final two summations. 
Using Lemma \ref{SquareLem} and the definition of $\sigma$ we can compute 
$$
\langle \sigma(v_{k},\cdot,\cdot),H(\cdot,\cdot)\rangle = \sum_{i=1}^{n-1}\sigma(v_{k},T_{i},T_{i})H(T_{i},T_{i}) \ -4(n-2) -2(n-2)^{2}.
$$
Summing over the possible values of $k$ yields
\begin{align*}
\sum_{k=1}^{n-1}\langle \sigma(v_{k},\cdot,\cdot),H(\cdot,\cdot)\rangle &= \left(\sum_{k=1}^{n-1}\sum_{i=1}^{n-1}\sigma(v_{k},T_{i},T_{i})H(T_{i},T_{i})\right)\\
&-4(n-1)(n-2) -2(n-1)(n-2)^{2}.
\end{align*}
Switching the order of summation we can compute
$$
\sum_{k=1}^{n-1}\sigma(v_{k},T_{i},T_{i})H(T_{i},T_{i}) = \left\{\begin{array}{cc}-4 & \mathrm{if} \ i<n-1,\\
-4(n-2)^{2} & \mathrm{if}  \ i=n-1. 
\end{array}\right.
$$
Hence 
\begin{align*}
\sum_{k=1}^{n-1}\langle \sigma(v_{k},\cdot,\cdot),H(\cdot,\cdot)\rangle &= -4(n-2)(n-1)-4(n-1)(n-2) -2(n-1)(n-2)^{2}\\
&= -2(n-2)(n-1)(n+2).
\end{align*}
The result now follows.
\end{proof}

\begin{lemma}\label{h_etaLem_3}
Let $h_{\eta}$ be as in Lemma \ref{h_etaLem_1}. Then
$$
\langle\nabla_{i}\nabla_{j}h_{kl},h_{ik}h_{jl}\rangle_{L^{2}} =-\dfrac{1}{2}[(n - 2)^{2}(n-1)n(n+2)]\vol(SU_n). 
$$
\end{lemma}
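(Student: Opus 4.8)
The plan is to follow the template of the proof of Lemma~\ref{h_etaLem_2}. First I would use that $\sigma$ is parallel together with the bi-invariant connection identity $\nabla_{i}\nabla_{j}\tilde{\eta}=\tfrac{1}{4}[E_{i},[E_{j},\eta]]$ to write, at the identity,
\[
\nabla_{i}\nabla_{j}(h_{\eta})_{kl}=\tfrac{1}{4}\sigma([E_{i},[E_{j},\eta]],E_{k},E_{l}),
\]
and then reduce the $L^{2}$ pairing to a pointwise computation at the identity, picking up the factor $\vol(SU_{n})$ exactly as in Lemma~\ref{h_etaLem_1}. The essential structural input is again the diagonality of $H$ with respect to the orthonormal basis $\mathcal{B}$ (Lemma~\ref{HdiagLem}). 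The one genuinely new feature is the index bookkeeping: whereas the contraction $h_{ij}h_{kl}$ in Lemma~\ref{h_etaLem_2} forced $i=j$ and $k=l$, the pattern $h_{ik}h_{jl}$ here forces instead $k=i$ and $l=j$, collapsing the expression to
\[
\langle\nabla_{i}\nabla_{j}h_{kl},h_{ik}h_{jl}\rangle_{L^{2}}=\frac{1}{4}\sum_{i,j}\sigma([E_{i},[E_{j},\eta]],E_{i},E_{j})\,H_{ii}H_{jj}\,\vol(SU_{n}).
\]

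Evaluating the inner sum is where the real work lies, and it is also the main obstacle. In Lemma~\ref{h_etaLem_2} only the ``diagonal'' double brackets $[E_{j},[E_{j},\eta]]$ occurred, and these were handed to us directly by Lemma~\ref{LieBracketLem2}; here the off-diagonal terms with $i\neq j$ survive and must be controlled. I see two routes. One is to exploit the $\mathrm{ad}$-invariance of $\sigma$: since $[E_{i},E_{i}]=0$ this gives $\sigma([E_{i},[E_{j},\eta]],E_{i},E_{j})=-\sigma([E_{j},\eta],E_{i},[E_{i},E_{j}])$, trading the double bracket of $\eta$ for structure-constant terms $[E_{i},E_{j}]$ read off from Lemma~\ref{ONBofsu_n}. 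The route I would actually prefer is to integrate by parts in the index $i$ and use that $h_{\eta}$ is divergence-free: this removes a derivative and, using $\nabla_{i}h_{jl}=\tfrac{1}{2}\sigma([E_{i},\eta],E_{j},E_{l})$, yields
\[
\langle\nabla_{i}\nabla_{j}h_{kl},h_{ik}h_{jl}\rangle_{L^{2}}=-\frac{1}{4}\sum_{i}H_{ii}\sum_{j,l}\sigma([E_{i},\eta],E_{j},E_{l})\,\sigma([E_{j},\eta],E_{i},E_{l})\,\vol(SU_{n}),
\]
so that only the single brackets $[E_{i},\eta]$ of Lemma~\ref{LieBracketLem1} enter, at the cost of a heavier triple index sum.

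Either way, the organizing principle is Lemma~\ref{LieBracketLem1}, which shows $[X,\eta]$ is supported on the last row and column; combined with the explicit anticommutators of Lemma~\ref{SquareLem} this isolates the pairs $(E_{i},E_{j})$ that contribute together with their coefficients. Summing these against the diagonal weights $H_{ii}H_{jj}$ from \eqref{H_diag_eqns1}--\eqref{H_diag_eqns4} and simplifying should deliver $-\tfrac{1}{2}(n-2)^{2}(n-1)n(n+2)\vol(SU_{n})$. As a consistency check I would note that this is precisely $-\tfrac{1}{2}$ times the value found in Lemma~\ref{h_etaLem_2}; together with Lemma~\ref{h_etaLem_1} the three pieces of Koiso's obstruction \eqref{Koiobs} then combine through the cancellation $(12-n^{2})+3(n^{2}-4)=2n^{2}$ into a nonzero multiple of $\vol(SU_{n})$, which is exactly what step (3) of the strategy demands.
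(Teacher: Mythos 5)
Your reduction is correct and is exactly the paper's starting point: left-invariance reduces everything to the identity, $\nabla_{i}\nabla_{j}\tilde{\eta}=\tfrac{1}{4}[E_{i},[E_{j},\eta]]$, and diagonality of $H$ forces $k=i$, $l=j$, giving
\[
\langle\nabla_{i}\nabla_{j}h_{kl},h_{ik}h_{jl}\rangle_{L^{2}}=\frac{1}{4}\sum_{i,j}\sigma([E_{i},[E_{j},\eta]],E_{i},E_{j})\,H_{ii}H_{jj}\,\vol(SU_n).
\]
But your proof stops precisely where the content of the lemma lies. Neither of your two routes is carried out, and neither is routine with the tools the paper supplies: route (a) needs the commutators $[E_{i},E_{j}]$ for arbitrary pairs of basis elements, which are \emph{not} available from Lemmas \ref{ONBofsu_n} and \ref{SquareLem} (those give anticommutators, while Lemmas \ref{LieBracketLem1} and \ref{LieBracketLem2} only give brackets with $\eta$); route (b) is a legitimate integration by parts but leaves a triple sum of products of $\sigma$-values that you never evaluate. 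Writing that the sum ``should deliver'' the stated value is the gap --- the conclusion is asserted, not derived.

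What closes the gap, and what the paper actually does, is the fact you demote to a ``consistency check'': the off-diagonal sum above is \emph{algebraically} equal to $-\tfrac{1}{2}$ times the diagonal sum already computed in Lemma \ref{h_etaLem_2}, so no structure constants and no new case analysis are needed. Concretely, continue from the first step of your route (a) with a second application of invariance, now of the inner product $\langle A,B\rangle=-\mathrm{tr}(AB)$: using the definition (\ref{3tens_su(n)}) of $\sigma$ and $E_{i}[E_{i},E_{j}]+[E_{i},E_{j}]E_{i}=E_{i}^{2}E_{j}-E_{j}E_{i}^{2}$,
\begin{align*}
\sigma([E_{i},[E_{j},\eta]],E_{i},E_{j}) &= -\sigma([E_{j},\eta],E_{i},[E_{i},E_{j}])
= -\sqrt{-1}\,\mathrm{tr}\bigl([E_{j},\eta](E_{i}^{2}E_{j}-E_{j}E_{i}^{2})\bigr)\\
&= -\sqrt{-1}\,\bigl\langle[E_{j},\eta],[E_{j},E_{i}^{2}]\bigr\rangle
= \sqrt{-1}\,\bigl\langle[E_{j},[E_{j},\eta]],E_{i}^{2}\bigr\rangle
= -\tfrac{1}{2}\,\sigma([E_{j},[E_{j},\eta]],E_{i},E_{i}),
\end{align*}
the penultimate equality being Ad-invariance of $\langle\cdot,\cdot\rangle$. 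Summing against $H_{ii}H_{jj}$ turns the unknown sum into $-\tfrac{1}{2}$ times the quantity of Lemma \ref{h_etaLem_2}, which is the stated value. So the factor $-\tfrac{1}{2}$ is the proof itself, not a coincidence to be verified after an independent (and, in your proposal, nonexistent) computation.
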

\begin{proof}
As  in the proof of Lemma \ref{h_etaLem_2},  the fact that $\sigma$ is parallel means that, for any local frame $\{E_{i}\}$,
$$\nabla_{i}\nabla_{j}h_{kl} = \sigma(\nabla_{i}\nabla_{j}\tilde{\eta}-\nabla_{\nabla_{E_{i}}E_{j}}\tilde{\eta}, E_{k},E_{l}).$$
We note that, at the identity,
\begin{align*}
\sigma(\nabla_{\nabla_{E_{i}}E_{j}}\tilde{\eta},E_{k},E_{l}) &= \frac{1}{4}\sigma([[E_{i},E_{j}],\eta],E_{k},E_{l}) \\
 &=-\frac{1}{4}\left( \sigma(\eta,[[E_{i},E_{j}],E_{k}],E_{l})+\sigma(\eta,E_{k},[[E_{i},E_{j}],E_{l}])\right),\\
\end{align*}
where the final equality follows from the $\textrm{Ad}$-invariance of $\sigma$.  The Jacobi identity then yields 
\begin{align*}
\sigma(\nabla_{\nabla_{E_{i}}E_{j}}\tilde{\eta},E_{k},E_{l}) &= 
\frac{1}{4}\left(\sigma(\eta,[[E_{k},E_{i}],E_{j}],E_{l})+\sigma(\eta,[[E_{j},E_{k}],E_{i}],E_{l})\right.\\
&+\left.\sigma(\eta,E_{k},[[E_{l},E_{i}],E_{j}])+\sigma(\eta,E_{k},[[E_{j},E_{l}],E_{i}])\right).\\
\end{align*}
Thus $\langle \sigma(\nabla_{\nabla_{E_{i}}E_{j}}\tilde{\eta},E_{k},E_{l}),(h_{\eta})_{ik}(h_{\eta})_{jl} \rangle_{L^{2}}$ can be expanded as 
\begin{align*}
& \ \ \  \frac{1}{4}\langle \sigma(\eta,[[E_{j},E_{k}],E_{i}],E_{l})+\sigma(\eta,E_{k},[[E_{l},E_{i}],E_{j}]),(h_{\eta})_{ik}(h_{\eta})_{jl} \rangle_{L^{2}}\\
&= \frac{1}{4}\langle \sigma(\eta,[[E_{j},E_{k}],E_{i}],E_{l})+\sigma(\eta,[[E_{l},E_{i}],E_{j}],E_{k}),(h_{\eta})_{ik}(h_{\eta})_{jl} \rangle_{L^{2}}\\
&=\frac{1}{4}\langle \sigma(\eta,[[E_{j},E_{k}],E_{i}],E_{l})+\sigma(\eta,[[E_{k},E_{j}],E_{i}],E_{l}),(h_{\eta})_{ik}(h_{\eta})_{jl} \rangle_{L^{2}}\\
&= 0,
\end{align*}
where the penultimate equality follows by a reordering of indices (note we are summing over all possible $i,j,k,$ and $l$).\\
\\
Computing at the identity in the orthonormal basis $\mathcal{B}$, Lemma \ref{HdiagLem} yields  $h_{\eta}=H$ is diagonal and so, to compute ${\langle\nabla_{i}\nabla_{j}h_{kl},h_{ik}h_{jl}\rangle }$, we need only consider terms of the form
$$\sigma(\nabla_{i}\nabla_{j}\eta, E_{i},E_{j}) = \dfrac{1}{4}\sigma([E_{i},[E_{j},\eta]], E_{i},E_{j}).$$
Noting that $\sigma$ is Ad-invariant  and  we can write
$$\sigma(\nabla_{i}\nabla_{j}\eta, E_{i},E_{j}) = -\dfrac{1}{4}\sigma([E_{j},\eta],E_{i},[E_{i},E_{j}]).$$
Using the definition of $\sigma$ given by Equation(\ref{3tens_su(n)}) we have 
\begin{align*}
-\dfrac{1}{4}\sigma([E_{j},\eta],E_{i},[E_{i},E_{j}]) &  =  -\dfrac{\sqrt{-1}}{4}\mathrm{tr}([E_{j},\eta](E_{i}[E_{i},E_{j}]+[E_{i},E_{j}]E_{i})) \\
& =   -\dfrac{\sqrt{-1}}{4}\mathrm{tr}([E_{j},\eta](E_{i}^{2}E_{j}-E_{j}E_{i}^{2}))\\
 &=-\dfrac{\sqrt{-1}}{4}\langle [E_{j},\eta],[E_{j},E_{i}^{2}]\rangle \\
& = \dfrac{\sqrt{-1}}{4}\langle [E_{j},[E_{j},\eta]],E_{i}^{2}\rangle,
\end{align*}
where the final equality follows as the inner product $\langle \cdot ,\cdot\rangle$ is also Ad-invariant. Thus we have (relabelling indices as needed)
\begin{align*}
\langle\nabla_{i}\nabla_{j}h_{kl},h_{ik}h_{jl}\rangle_{L^{2}} &= -\dfrac{\sqrt{-1}}{4}\mathrm{tr}([E_{i},[E_{i},\eta]]E_{k}^{2})H_{ii}H_{kk}\vol(SU_n) \\
&= -\frac{1}{8}\sigma([E_{i},[E_{i},\eta]],E_{k},E_{k})H_{ii}H_{kk}\vol(SU_n).\\
&= -\frac{1}{2}{\langle\nabla_{i}\nabla_{j}(h_{\eta})_{kl},(h_{\eta})_{ij}(h_{\eta})_{kl}\rangle_{L^{2}}},\\
& = -\dfrac{1}{2}[(n - 2)^{2}(n-1)n(n+2)]\vol(SU_n),
\end{align*}
where  Lemma \ref{h_etaLem_2} was applied to deduce the final equality. 
\end{proof}
\noindent We now  have everything needed to prove the main theorem in this paper.

\begin{proof}[Proof of Theorem \ref{ThmA}] As $\psi \in \mathrm{Hom}_{G}(s^{2}(\mathfrak{g}), \mathfrak{g})$ and it is invariant under the involution switching the $\mathfrak{su}_{n}$ factors, it must be a multiple of $\Psi$. As $2n+1$ is odd,  the obstruction to integrability to second order $\psi$ has no non-trivial zeroes by Lemma \ref{lem:zeroset} (unless of course it vanishes identically).  Thus, to show that none of the elements of $\varepsilon(g)$ are integrable to second order we simply need to find $h^{\ast}\in \varepsilon(g)$ such that $\mathcal{I}(h^{\ast})\neq 0$. We take $h^{\ast} = h_{\eta}$ where $\eta$ is as in Lemma \ref{HdiagLem},  and then, using Lemmas \ref{h_etaLem_1}, \ref{h_etaLem_2}, and \ref{h_etaLem_3}, and the fact that the Einstein constant $\lambda = \dfrac{n}{2}$ we compute

\begin{align*}
\mathcal{I}(h^{\ast}) =& 2\lambda\langle h_{i}^{k}h_{kj},h_{ij}\rangle+3\langle\nabla_{i}\nabla_{j}h_{kl},h_{ij}h_{kl} \rangle- 6\langle\nabla_{i}\nabla_{j}h_{kl},h_{ik}h_{jl}\rangle \\
=& \left(2\cdot \frac{n}{2} \left[2(n-1)(n-2)(12-n^{2})\right]\right. \\
&\left. +3\left[(n - 2)^{2}(n-1)n(n+2) \right]\right. \\
&\left. +6\dfrac{1}{2}[(n - 2)^{2}(n-1)n(n+2)] \right) \vol(SU_n).
\end{align*}
Simplifying, this yields
\[
\mathcal{I}(h^{\ast}) = 4n^{3}(n-1)(n-2)\vol(SU_n) \neq 0.
\]
Hence all the infinitesimal variations are non-integrable and so the Einstein metric is isolated in its moduli space.
\end{proof}
\section{Applications to Solitonic Rigidity}\label{sec:solrig}
\subsection{Ricci Solitons}
Einstein metrics are fixed points (up to diffeomorphism and homothethic scaling) of the Ricci flow
\begin{equation}\label{RFequation}
\frac{\partial g}{\partial t} = -2 \Ric (g).
\end{equation}
There are also non-Einstein fixed points of Equation \ref{RFequation} known as \textit{Ricci solitons} which are metrics that solve
\begin{equation}\label{RSequation}
\Ric (g) +\frac{1}{2}L_{X}g =\lambda g,
\end{equation}
for a vector field $X\in \Gamma (TM)$ and constant $\lambda\in \mathbb{R}$.  A soliton is known as \textit{expanding}, resp. \textit{steady}, resp. \textit{shrinking} depending upon whether $\lambda<0$, resp. $\lambda=0,$ resp. $\lambda>0$. If one can find a function $f$ such that $X=\nabla f$ then Equation (\ref{RSequation}) becomes
\begin{equation}\label{GRSequation}
\Ric (g) +\mathrm{Hess}(f) =\lambda g,
\end{equation}
and the metric $g$ is known as a \textit{gradient} Ricci soliton with potential function $f$.
Perelman \cite{Per1} demonstrated that, on compact manifolds, any non-Einstein Ricci soliton is a shrinking gradient Ricci soliton. Therefore we only consider shrinking gradient Ricci solitons on compact manifolds in the sequel.
\subsection{Ricci soliton moduli}
The study of moduli spaces for Ricci solitons was initiated by Podest\`a and Spiro in \cite{PS}.  It is clear from Equation (\ref{GRSequation}) that, given a gradient Ricci soliton $g$ with potential function $f$, adding a constant to $f$ yields admissible potential function.  Hence, in order to study moduli of solitons, the potential functions should be normalised.\\
\\
If $\mathcal{M}$ denotes the space of all Riemannian metrics on a manifold $M^{m}$, Podest\`a and Spiro introduce the set ${\mathcal{P}\subset\mathcal{M}\times C^{\infty}(M)}$ defined by
$$
\mathcal{P} = \left\{ (g,f) \ \bigg |  \ \int_{M}e^{-f}d\mathrm{Vol}_{g} = (2\pi)^{m/2} \right\}.
$$
They define the set $\mathcal{S}ol\subset\mathcal{P}$ to be the pairs solving Equation (\ref{GRSequation}) after fixing $\lambda=1$. Given a Ricci soliton ${(g,f) \in \mathcal{S}ol}$ they define an `$f$-twisted' slice inside $\mathcal{M}$ for the action of the diffeomorphism group and the premoduli space about $(g,f)$ is the intersection of the slice (or rather the product of the slice and $C^{\infty}(M)$) and the set $\mathcal{S}ol$. We will not directly  need the theory developed and so we refer the reader to the paper \cite{PS} for further details. There is also a solitonic notion of infinitesimal variation generalising that of EIDs given in Definition \ref{def:EID}. Podest\`a and Spiro use a different definition of the space of essential infinitesimal soliton deformations and then show the following is equivalent (see Theorem 3.3 in \cite{PS}).
\begin{definition}[Essential infinitesimal solitonic variation]
Let $(g,f)\in \mathcal{S}ol$ be a shrinking Ricci soliton and let ${h\in s^{2}(T^{\ast}M)}$  satisfy 
\begin{equation}{\label{h_tf_sol}}
\tr(h)=\alpha,
\end{equation}	
\begin{equation}{\label{h_df_sol}}
\mathrm{div}_{f}(h):=\mathrm{div}(h)-\iota_{\nabla f}h=0,
\end{equation}
and	
\begin{equation}{\label{h_LL_sol}}
\Delta_{f} h +2\mathrm{Rm}(h) =0,
\end{equation}	
where $\alpha \in C^{\infty}(M)$ and ${\Delta_{f}(\cdot) := \Delta(\cdot)-\nabla_{\nabla f}(\cdot)}$. Then the pair $(h,a)$ is referred to as an (essential) \textit{ infinitesimal solitonic deformation} (ISD). We denote the space of such $(h,\alpha)$ by $\sigma(g,f)$.
\end{definition}
\noindent In \cite{PS}, Podest\`a and Spiro prove the solitonic analogue of Koiso's Theorem \ref{KST}: given $(g,f)\in \mathcal{S}ol$, there is a real analytic submanifold $\mathcal{Z}$ of the product of the $f$-twisted slice and $C^{\infty}(M)$ such that the tangent space to $Z$ at $(g,f)$ is $\sigma(g,f)$ and the premoduli space for $(g,f)$ is  a real-analytic subset of  $\mathcal{Z}$.\\
\\
If the Ricci soliton is an Einstein metric then the potential function $f$ is constant and
\[
f=\log\left(\vol(M,g)\right)-\frac{m}{2}\log\left(\frac{2\pi}{\lambda}\right).
\] 
In this case the space of infinitesimal solitonic deformations decomposes as the direct sum of the infinitesimal Einstein deformations and an eigenspace for a particular eigenvalue of the Laplacian on functions. 
\begin{lemma}[Podesta--Spiro, Proposition 4.4 in \cite{PS}  - see also Kr\"oncke Lemma 6.2 in \cite{KKCVP}] \label{lem:KlausISD}
Let $(M, g)$ be an Einstein manifold with Einstein constant $\lambda$. Let EID be the space of infinitesimal Einstein deformations and ISD the space of infinitesimal solitonic deformations. Then ISD has the following decomposition:
$$
\mathrm{ISD}=\mathrm{EID} \oplus \left\{ \lambda v\cdot g+\mathrm{Hess}(v) \bigg | v\in C^{\infty}(M), \Delta v=-2\lambda v \right\}.
$$
\end{lemma}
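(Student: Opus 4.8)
The plan is to decompose the infinitesimal solitonic deformation equations on an Einstein background and show that the defining conditions split the space $\mathrm{ISD}$ into exactly the two pieces described. Since $g$ is Einstein with constant $\lambda$, the potential function $f$ is constant, so the weighted operators collapse: $\mathrm{div}_f = \mathrm{div}$ and $\Delta_f = \Delta$ acting on the relevant tensors. First I would examine the trace condition \eqref{h_tf_sol}, which here reads $\tr(h) = \alpha$ for some function $\alpha$; unlike the Einstein case this is no longer forced to vanish, and it is precisely this freedom that introduces the extra summand. The strategy is to write a general ISD $(h,\alpha)$ as the sum of its trace-free part and a pure-trace part, and to identify how each part is constrained by \eqref{h_df_sol} and \eqref{h_LL_sol}.

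Next I would analyse the pure-trace (conformal) deformations of the form $h_v := \lambda v\cdot g + \Hess(v)$ and verify they satisfy the solitonic system precisely when $\Delta v = -2\lambda v$. The key computations are standard Bochner-type identities on an Einstein manifold: one computes $\mathrm{div}(\Hess v)$ using the commutation of derivatives and the Einstein condition $\Ric = \lambda g$ (this produces terms involving $\nabla(\Delta v)$ and $\lambda \nabla v$), and one computes $\Delta(h_v) + 2\Riem(h_v)$ using that $\Riem(g) = \Ric$ and that $\Riem$ acts on $\Hess v$ via the curvature. Imposing that these vanish forces the eigenvalue equation $\Delta v = -2\lambda v$; conversely, substituting this eigenvalue back shows all three defining equations hold with $\alpha = \tr(h_v) = (m\lambda + \Delta v)v$-type expression, so these tensors genuinely lie in $\mathrm{ISD}$. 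I would also confirm that the EID summand (where $\alpha = 0$) sits inside $\mathrm{ISD}$, which is immediate since for constant $f$ the solitonic equations reduce to the Einstein deformation equations \eqref{h_tf}--\eqref{h_LL}.

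To establish that the sum is direct and exhausts $\mathrm{ISD}$, I would take an arbitrary $(h,\alpha)\in\mathrm{ISD}$ and project off a conformal piece. Given $h$, one solves for a function $v$ in the eigenspace $\{\Delta v = -2\lambda v\}$ so that $h_v$ matches the trace part of $h$; the solvability here rests on decomposing $\alpha$ into its components along the relevant Laplacian eigenspaces and checking that the trace condition together with \eqref{h_LL_sol} constrains $\alpha$ to lie in the $-2\lambda$ eigenspace. After subtracting $h_v$, the remainder is a trace-free solution of the solitonic system, which (since $f$ is constant) is exactly an EID. The directness of the decomposition then follows from the observation that a nonzero conformal tensor $h_v$ with $v$ in the eigenspace has nonzero trace, whereas EIDs are trace-free, so $\mathrm{EID}\cap\{h_v\} = \{0\}$.

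The main obstacle I anticipate is the final surjectivity/closure step: showing that the trace function $\alpha$ of an arbitrary ISD must itself be an eigenfunction with eigenvalue $-2\lambda$, so that the conformal correction $h_v$ can actually be found within the prescribed eigenspace. This requires carefully extracting a scalar equation for $\alpha = \tr(h)$ by taking the trace of the tensor equation \eqref{h_LL_sol} and combining it with the divergence constraint \eqref{h_df_sol}; one must use the contracted second Bianchi identity and the Einstein condition to convert $\tr(\Riem(h))$ and $\tr(\Delta h)$ into $\Delta \alpha$ plus lower-order terms, and then show the resulting equation forces $\Delta\alpha = -2\lambda\alpha$. Since this is precisely the result attributed to Podest\`a--Spiro and Kr\"oncke, I would expect to invoke or reprove this scalar reduction, and it is the computationally delicate heart of the argument.
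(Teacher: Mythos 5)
Your proposal is essentially correct, but note that the paper contains no proof of this lemma to compare against: it is stated as a quoted result of Podest\`a--Spiro (Proposition 4.4 of \cite{PS}) and Kr\"oncke (Lemma 6.2 of \cite{KKCVP}), and your argument is in essence the standard one from those references. Two comments on the details. First, the step you single out as the ``computationally delicate heart'' --- showing that $\alpha=\tr(h)$ of an arbitrary ISD lies in the $(-2\lambda)$-eigenspace --- is in fact a one-line computation requiring neither the contracted second Bianchi identity nor the divergence constraint \eqref{h_df_sol}: since the metric is parallel, the trace commutes with the rough Laplacian, so $\tr(\Delta h)=\Delta\alpha$, and on an Einstein background $\tr(\mathrm{Rm}(h))=\langle \Ric,h\rangle=\lambda\alpha$; tracing \eqref{h_LL_sol} (with $f$ constant) therefore gives $\Delta\alpha=-2\lambda\alpha$ at once. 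Second, your placeholder for the trace of the conformal piece should be made exact: $\tr(\lambda v\cdot g+\Hess(v))=m\lambda v+\Delta v=(m-2)\lambda v$ for $v$ in the eigenspace, so the projection in your surjectivity step is explicit, $v=\alpha/((m-2)\lambda)$; subtracting $h_v$ leaves a trace-free, divergence-free solution of \eqref{h_LL}, i.e.\ an EID, and the same formula gives directness, since $\tr(h_v)=(m-2)\lambda v$ vanishes only if $v=0$ (using $\lambda\neq 0$ and $m>2$). Finally, the verification that $h_v$ solves \eqref{h_LL_sol} rests, as you indicate, on the Einstein-manifold commutation identities $\Delta(\Hess v)+2\mathrm{Rm}(\Hess v)=\Hess(\Delta v)+2\lambda\Hess(v)$ and $\Delta(vg)+2\mathrm{Rm}(vg)=(\Delta v+2\lambda v)g$; granted these (which are the Weitzenb\"ock computation you sketch), your outline goes through.
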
 
\noindent We will denote the $-2\lambda$-eigenspace of the Laplacian by $V_{-2\lambda}$.   As with infinitesimal Einstein deformations, a natural question is whether an element of ISD is the tangent to a genuine family of Ricci solitons.  Kr\"oncke proved the following analogy to Lemma \ref{Koilem1}. 
\begin{theorem2}[Kr\"oncke, Theorem 5.7 in \cite{KKJGA}] \label{thm:KK_Int}
Let $(M,g)$ be an Einstein metric with Einstein constant $\lambda>0$ and  let 
$v\in V_{-2\lambda}$. Then 
\[h = \lambda v\cdot g+\nabla^{2}v, \]
where $v\in \mathrm{ISD}$, is not integrable to second order if there exists a function $w \in V_{-2\lambda}$  such that
\[
\int_{M}v^{2}w \ d\mathrm{Vol}_{g} \neq 0. 
\]
\end{theorem2}
\noindent An equivalent formulation of this theorem is to say that, if $v\in V_{-2\lambda}$, then the corresponding ISD $h$ is integrable to second order if and only if the $L^{2}$-projection of $v^{2}$ to $V_{-2\lambda}$ vanishes.\\
\\
Let $M=G/K$ be a compact irreducible Hermitian symmetric space; a classical result due to Matsushima \cite{Mat} gives a $G$-equivariant isomorphism $\Phi:\mathfrak{g}\rightarrow V_{-2\lambda}$. The methods of Section \ref{sec:SU(n)calcs} thus give a proof of the following theorem of Kr\"oncke.
\begin{proof}[Proof of Theorem \ref{ThmC}] 
The space of infinitesimal Einstein deformations $\varepsilon(g)$ vanishes \cite{KoiOsaka2} and so $\mathrm{ISD} \cong \mathfrak{su}_{2n+1}$.\\
\\
 The obstruction given by Theorem \ref{thm:KK_Int} is an element of $\mathrm{Hom}_{SU_{2n+1}}(s^{2}(\mathfrak{g}),\mathfrak{g})$, where ${\mathfrak{g} = \mathfrak{su}_{2n+1}}$ which is one dimensional by Lemma \ref{lem:su_n_hom}. The obstruction can be understood to be a multiple of the map 
 $$
 \Psi(X,Y) = \sqrt{-1}\left(XY+YX - \frac{\tr(XY+YX)}{2n+1}\mathrm{Id_{2n+1}}\right),
 $$
 and as $2n+1$ is odd, this map  does not have any non-trivial zeroes.  It is possible to find an eigenfunction $v\in V_{-2\lambda}$ such that
\[
\int_{\mathbb{CP}^{2n}} v^{3}d\mathrm{Vol}_{g} \neq 0, 
\]
(see \cite{HMW}, \cite{KS}, or \cite{KKCAG}). Hence the obstruction is a non-zero multiple of $\Psi$ and so does not vanish for any perturbation induced by $v\in V_{-2\lambda}$.
\end{proof}
\noindent Podest\`a and Spiro also considered the following notion of rigidity for an Einstein metric.

\begin{definition}[Weak solitonic rigidity - Definition 5.1 in \cite{PS}]
A compact Einstein metric $g$ is said to be weakly solitonic rigid if there is a neighbourhood $\mathcal{U}\subset \mathcal{M}$ of $g$ such that every Ricci soliton in $\mathcal{U}$ is Einstein.
\end{definition}
\noindent We can now prove Theorem \ref{ThmB} which is a generalisation of Theorem \ref{ThmC} in this weaker setting.

\begin{proof}[Proof of Theorem \ref{ThmB}] 
The structure theorem of Podest\`a--Spiro for the solitonic premoduli space guarantees that in a small enough neighbourhood of $g$, all solitons come from integrable deformations of $g$.\\
\\
If $N=2n+1$, the space of solitonic deformations on ${M=SU_{N}/S(U_{k}\times U_{N-k})}$ splits as in Lemma \ref{lem:KlausISD}.  As in the proof of  Theorem \ref{ThmC}, the eigenspace $V_{-2\lambda}$ is $SU_{2n+1}$-equivariantly isomorphic to $\mathfrak{su}_{2n+1}$.  In \cite{HMW}, the authors show that there exists an eigenfunction $f \in V_{-2\lambda}$ with
\[
\int_{M} f^{3}d\mathrm{Vol}_{g}\neq 0.
\]
Hence Kr\"oncke's obstruction in Theorem \ref{thm:KK_Int} does not vanish and is given by a multiple of the map $\Psi$ from the proof of Theorem \ref{ThmB} which has no non-trivial zeroes on $\mathfrak{su}_{2n+1}$.  Hence the only possibly integrable deformations are EIDs.
\end{proof}
\section{Applications to stability of the Ricci flow and to other rigidity problems}\label{sec:AOB}
\subsection{Applications to dynamical stability}
\noindent  For a general compact Riemannian manifold $(M^{m},g)$, Perelman's $\nu$-entropy \cite{Per1} is given by
\[
\nu(g)  = \inf \left\{ \mathcal{W}(g,f,\tau): f\in C^{\infty}(M), \ \tau>0, \ (4\pi\tau)^{-\frac{m}{2}}\int_{M}e^{-f}d\mathrm{Vol}_g =  1 \right \},
\]
where
\[
\mathcal{W}(g,f,\tau) = \int_{M}[\tau (\mathrm{S}(g)+|\nabla f|^{2})+f-m]e^{-f}d\mathrm{Vol}_g.
\]
For any metric $g$, the infimum is always achieved by a pair $(f,\tau)$.   Perelman demonstrated the following formula for the first variation of $\nu$; let ${g(s):=g+sh}$ for some $h\in s^{2}(T^{\ast}M)$ and $s\in (-\varepsilon,\varepsilon)$ then  
\begin{equation}\label{eqn:1var_nu}
\dfrac{d}{ds}\bigg|_{s=0} \nu(s) = -\tau(4\pi\tau)^{-\frac{m}{2}} \int_{M} \bigg\langle  h, \ \Ric(g) + \Hess(f) - \frac{1}{2\tau}g \bigg\rangle e^{-f} d\mathrm{Vol}_g.
\end{equation}
What Equation (\ref{eqn:1var_nu}) demonstrates is that Ricci solitons are critical points of $\nu$. Furthermore, from  Equation (\ref{eqn:1var_nu}), it can be easily deduced that the $\nu$-entropy is increasing   along the Ricci flow except at these critical points; hence, variations that increase the entropy are destabilising.\\
\\

We return to the case where $g$ is an Einstein metric but, in order to keep our formulae in line with others in the literature, we will write the Einstein constant $\lambda=\frac{1}{2\tau}$. The second variation variation of $\nu$ at an Einstein metric was computed by Cao, Hamilton and Ilmanen \cite{CHI} (see \cite{CZ} for a proof in the general case of a Ricci soliton). They showed that the second variation on TT tensors is controlled by the spectrum of the Lichnerowicz Laplacian and that the second variation vanishes if $h$ is an EID. Thus, to determine whether an EID $h$ is destabilising, we must compute the third derivative of $\nu(g(s))$ at $s=0$.  To do this we will follow the method used by  Knopf and Sesum \cite{KS} to compute the third variation in conformal directions and write
\begin{align*}
\dfrac{d \nu(s)}{ds}\bigg|_{s=0} &= A  \int_M \bigg\langle  h, \ \Ric(g) + \Hess(f) - \frac{1}{2\tau} g \bigg\rangle e^{-f} d\mathrm{Vol}_g\\
&= A\int_M \underbrace{\bigg(g^{ip}g^{jq}h_{pq}\bigg)}_{B} \underbrace{\bigg( \Ric_{ij} + \Hess(f)_{ij} - \frac{1}{2\tau}g_{ij}\bigg)}_{C}\underbrace{\bigg( e^{-f}d\mathrm{Vol}_g\bigg)}_{D},
\end{align*}
where $A = -\tau(4\pi\tau)^{-\frac{m}{2}}$.  In the following proof we will also use the convention of  \cite{KS} and write $\mathcal{A}', \mathcal{A}'', etc.$ for the derivative with respect to $s$ and evaluated at $s=0$ of any quantity $\mathcal{A}$. 
 
\begin{proof}[Proof of Theorem \ref{ThmD}] 
Clearly, when $s=0$ we have that  $C=0$.  Hence Lemma 2.4 of \cite{CZ} implies that $\tau'=0$ and so $A'=0$. Using the formulae for $\Ric'$ (see \cite{Top} for example), the fact that $h$ is an EID and that $f(0)$ is constant, we have
\[
C' = -\Hess(f').
\]
Cao and Zhu give a characterisation of $f'$ for variations at general Ricci soliton (see the proof of Lemma 2.4 in \cite{CZ}).  It is clear that at an Einstein metric if $h$ is TT then $f'=0$ and so $C'=0$. Elementary calculus then yields 
\begin{align*}
\frac{d^{3}}{ds^{3}}\nu(g(s))\bigg|_{s=0} &= A\int_{M}BC''D \\ 
&= -\tau(4\pi\tau)^{-\frac{m}{2}} \int_{M} \bigg\langle  h, \ \Ric(g)'' + (\Hess(f))'' \bigg\rangle e^{-f} d\mathrm{Vol}_g.
\end{align*}
As $f'=0$, 
\[
(\Hess(f))'' = \Hess(f'').
\]
The TT tensors are $L^{2}$-orthogonal to Lie derivatives of the metric and so, as $\Hess(f'') = \frac{1}{2}L_{\nabla f''}g$, we have
\[
\frac{d^{3}}{ds^{3}}\nu(g(s))\bigg|_{s=0} =-\tau(4\pi\tau)^{-\frac{m}{2}} \int_{M} \bigg\langle  h,\ \Ric(g)'' \bigg\rangle e^{-f} d\mathrm{Vol}_g.
\]
The result then follows from Proposition 4.2 and Lemma 4.3 in \cite{KoiOsaka2}.  
\end{proof}

\begin{proof}[Proof of Theorem \ref{ThmE}]
For any $n>2$ we have demonstrated an EID $h$ such that $\mathcal{I}(h)\neq 0$.  For the family of metrics $g(s):=g+sh$, the first and second derivatives of $\nu(g(s))$ but, by Theorem \ref{ThmD}, the third derivative of $\nu(g(s))$ does not vanish.  Hence $g$ is not a local maximum of the $\nu$-entropy and so $g$ is dynamically unstable. 
\end{proof} 
\subsection{Applications to other rigidity problems}
For the symmetric spaces $SU_{2n+1}/SO_{2n+1}$, if one can demonstrate that, for some $h_{\eta}$, the quantity $\mathcal{I}$ does not vanish then none of the EID on these spaces is integrable. It would be particularly interesting to compute $\mathcal{I}$ given Theorem \ref{ThmD}, as the dynamical stability of the five-dimensional space $SU_{3}/SO_{3}$ is currently unknown.\\
\\ 
Our methods show almost all the EIDs on $SU_{2n}$ are not integrable.  Higher order obstructions need to be considered for the remaining cases.\\
\\
Koiso demonstrated in \cite{KoiOsaka2} that $\mathrm{Hom}_{E_{6}}(s^{2}(\mathfrak{e}_{6}),\mathfrak{e}_{6})=0$ and so all the EIDs on this space are integrable to second order.\\
\\
The Grassmannians have EIDs, but these do not arise from the construction outlined in Section \ref{sec:deform}.  For Grassmannians $SU_{p+q}/S(U_p\times U_q)$, where  
\[
K = S(U_p\times U_q) \qquad \mathrm{and} \qquad \mathfrak{p} = \left(\begin{array}{cc}0_{p\times p} & Z \vspace{2pt} \\ -\overline{Z}^{t} & 0_{q\times q}\end{array}\right),
\]
for $Z\in M^{p\times q}(\mathbb{C})$. As the product of any three matrices in $\mathfrak{p}$ is trace-free, we see directly that the space $s^{3}(\mathfrak{p})^{K}$ vanishes in this case. The construction of infinitesimal variations for the Grassmannians  is detailed in Chapter VIII of the book \cite{GGbook}.  The method of generating EIDs uses some of the complex differential geometry specific to the case of the Grassmannians and in particular, the generalised Euler sequence. Again, for the spaces $SU_{2n+1}/S(U_{n-k}\times U_{k})$, if one such deformation is obstructed then they are all obstructed. 
We hope to investigate this construction in a future work.\\
\\
Finally, we note that the bi-invariant metric on the compact Lie group $G_{2}$ is known to admit infinitesimal solitonic deformations \cite{CH}. In \cite{SJHG_2}, the second author demonstrated that there exist deformations that are not integrable to second order and hence the Einstein metric is dynamically unstable. It would be interesting to extend this analysis to characterise precisely which solitonic deformations are not integrable.

\bibliographystyle{acm} 
\bibliography{BHMW20Refs}
\end{document}